\def\theARTICLEABSTRACT{%
  \HOOKb
  \vspace*{18pt}
  \begin{minipage}[t]{\textwidth}\parindent1em\ABSfont
    \noindent\theABSTRACT\endgraf
    \vskip5pt
    \theFUNDING
    \theKEYWORDS
    \theSUBJECTCLASS
    \theAREAOFREVIEW
    \theORMSCLASS
    \if@BLINDREV\else\theHISTORY\fi
    \noindent\textbf{Citation notice:} This is the author accepted manuscript version of the article published in \emph{Mathematics of Operations Research}. The final version is available at: \url{https://doi.org/10.1287/moor.2024.0609}.
    \vskip5pt
  \noindent\hrulefill
  \end{minipage}
  \vspace*{0pt}
}
 \def\e{\varepsilon}
 \def\bx{\bar{x}}
 \def\bp{\bar{p}}
 \def\bu{\bar{u}}
 \def\bv{\bar{v}}
 \def\bw{\bar{w}}
 \def\tlam{\tilde{\lambda}}
 \def\blam{\bar{\lambda}}
 \def\hes{\nabla^2}
 \def\gph{{\rm gph}}
 \def\S{S_{\rm KKT}}
 \def\L{L_{\rm KKT}}
 \def\tu{\tilde{u}}
 \def\R{\mathbb{R}}
 \def\mypi{\Pi_{\mathbb{R}^{|I_1|}}}
 \def\T{T_{\rm KKT}}
 \def\eps{\varepsilon}
\begin{document}


\RUNAUTHOR{Diao, Dai and Zhang}

\RUNTITLE{Stability for Nash Equilibrium Problems}

\TITLE{Stability for Nash Equilibrium Problems}

\ARTICLEAUTHORS{
\AUTHOR{Ruoyu Diao\textsuperscript{a,b}, Yu-Hong Dai\textsuperscript{a,b}, Liwei Zhang$^{\rm c,d,*}$}
\AFF{\textsuperscript{a}Academy of Mathematics and Systems Science, Chinese Academy of Sciences, 100190 Beijing, China; \textsuperscript{b}School of Mathematical Sciences, University of Chinese Academy of Sciences, 100049 Beijing; \textsuperscript{c}National Frontiers Science Center for Industrial Intelligence and Systems Optimization, Northeastern University, 110819 Shenyang, China; \textsuperscript{d}Key Laboratory of Data Analytics and Optimization for Smart Industry (Northeastern University), Ministry of Education, 110819 Shenyang, China}
\AFF{\textsuperscript{*}Corresponding author}
\AFF{\EMAIL{diaoruoyu18@mails.ucas.ac.cn}; \EMAIL{dyh@lsec.cc.ac.cn}; \EMAIL{Zhanglw@mail.neu.edu.cn}}


} 

\ABSTRACT{%
Consider the stability properties of the Karush-Kuhn-Tucker ({\rm KKT}) solution mapping $S_{\rm KKT}$ for Nash equilibrium problems (NEPs) with canonical perturbations. Firstly, we obtain an exact characterization of the strong regularity of $S_{\rm KKT}$ as well as an easily-verified sufficient condition. Secondly, we propose equivalent conditions for the continuously differentiable single-valued localization of $S_{\rm KKT}$. Thirdly, the isolated calmness of $S_{\rm KKT}$ is studied based on the I-property. The P-property is proposed as a sufficient condition for the robust isolated calmness of $S_{\rm KKT}$ under convex assumptions. Furthermore, we establish that studying the stability properties of the NEP with canonical perturbations is equivalent to studying those of the NEP with only tilt perturbations based on the prior discussions. Finally, we provide  detailed characterizations of stability for the NEPs in which each individual player solves a quadratic programming problem.
}%

\FUNDING{This research was supported by the Strategic Priority Research Program of Chinese
Academy of Sciences (No. XDA27010101). The second author was supported by the Natural Science Foundation of China  (Nos. 11991021, 11991020 and 12021001). The third author was supported by the Major Program of National Natural Science Foundation of China (Nos. 72192830 and 72192831), National Natural Science Foundation of China (No.12371298) and the 111 Project (B16009).}



\KEYWORDS{Nash equilibrium problems, Strong regularity, Continuously differentiable single-valued localization, Robust isolated calmness, Aubin property, Strict Mangasarian-Fromovitz constraint qualification, Quadratic programming} 



\maketitle


\section{Introduction}\label{sec:Intro}

 Nash equilibrium problems (NEPs) are a significant class of game models arising in fields like machine learning (\citet{farnia2020gans}), supply chain management (\citet{contreras2004numerical}) and economics (\citet{myerson2013game}). For comprehensive surveys and general references, see \citet{aubin2013optima, facchinei2010generalized, young2014handbook}.
 In investigating the convergence of algorithms, it is important to assess whether the perturbations in NEPs would impact the solution. Typically, we study stability to ensure the convergence of these algorithms.

 Let $N$ be the number of players in a Nash equilibrium problem, $x^k:=(x^k_1,...,x^k_{n_k})\in \mathbb{R}^{n_k}$ represent the strategy of the $k$-th player,  
 and $x^{-k}:=(x^1,...,x^{k-1},x^{k+1},$ $...,x^{N})$ denote the strategies of the other players.
 In an NEP with canonical perturbations, the $k$-th player's best strategy is the minimizer of the following optimization problem with canonical perturbations
 \begin{equation}
    \textbf{P}^k (x^{-k},p^k)\quad \quad
    \begin{array}{ll}
     \min\limits_{x^k \in \mathbb{R}^{n_k}} & f^k  (x^k, x^{-k}; w^k)-\left< v^k , x^k\right>\\
   {\rm s.\ t.}& g_i^k (x^k; w^k) = u^k_i, \  i=1,...,s_k,\\
   & g^k_j  (x^k; w^k) \le u^k_j,\ j=s_k+1,...,m_k\\
   \end{array}
   \end{equation}
for the given other players' strategies $x^{-k}$. In the above, $w^k \in \mathbb{R}^{d_k}$, $v^k \in \mathbb{R}^{n_k}$ and $ u^k: =(u^k_1,...,u^k_{m_k})\in \mathbb{R}^{m_k}$ are perturbation parameters. The perturbation parameter $p^k:=(u^k,v^k,w^k)$, and $p:=(p^1,...,p^N)$. The $k$-th player has an objective function 
$f^k\in C^2  (\mathbb{R}^{n}\times \mathbb{R}^{d_k})$, and constraints $g^k_i\in C^2 (\mathbb{R}^{n_k}\times \mathbb{R}^{d_k})$ for $i=1,...,m_k$. Let $x:=(x^1,...,x^N)$, $u:=(u^1,...,u^N)$, $v:=(v^1,...,v^N)$, and $w:=(w^1,...,w^N)$. The total dimensions of the strategy $x$ and the perturbation parameter $w$ are $n:=n_1+\cdots+n_N$ and $d:=d_1+\cdots +d_N$, respectively. Then the NEP at the perturbation $p$ can be defined as 
\begin{equation}\label{NEP_perturbations}
    {\rm finding} \ \bar{x}\in \mathbb{R}^n\  {\rm such\  that\  
    each\ } \bar{x}^k\  {\rm is\ a\ minimizer\ of\ } \textbf{P}^k (\bar{x}^{-k},p^k),
    \end{equation}
 where $\bx: = (\bx^1,...,\bx^N)$. A solution of \eqref{NEP_perturbations} is called a Nash equilibrium (NE) at $p$. We define a local Nash equilibrium (LNE) at $p$ as $\bx$, where each $\bx^k$ is a local minimizer of $\textbf{P}^k (\bar{x}^{-k},p^k)$.

The ordinary Lagrange function is defined as
$$
    L^k  (x^k, x^{-k}, \lambda^k ;w^k ): = f^k (x^k,x^{-k}; w^k) + \sum_{i=1}^{s_k} \lambda^k_i g^k_i  (x^k;w^k) + \sum_{j=s_{k}+1}^{m_k}\lambda^k_j g^k_j  (x^k;w^k)
$$
for $k=1,...,N$. Let 
$$
    G^k (x^k;w^k): = 
    \begin{pmatrix}
        g^k_1  (x^k;w^k)\\
        \vdots \\
        g^k_{m_k}  (x^k;w^k)
    \end{pmatrix}, \quad k=1,...,N. 
$$
Let $S_{\rm KKT}: \mathbb{R}^{m+n+d}  \rightrightarrows\  \mathbb{R}^{n+m}$ be the Karush-Kuhn-Tucker (KKT) solution mapping of NEP  (\ref{NEP_perturbations}) such that $\S (p)$ is the solution set of the following generalized equation
\begin{equation}\label{NEP-KKT}
    \left\{\begin{array}{rl}
        v^1 &= \nabla_{x^1} L^1  (x^1,x^{-1},\lambda^1;w^1),\\
        &\vdots\\
        v^N &= \nabla_{x^N} L^N  (x^N,x^{-N},\lambda^N;w^N),\\
        -u^1 &\in -G^1  (x^1; w^1) + N_{\mathbb{R}^{s_1} \times \mathbb{R}_+^{m_1-s_1}} (\lambda^1),\\
        &\vdots\\
        -u^N &\in -G^N(x^N;w^N)+N_{\mathbb{R}^{s_N} \times \mathbb{R}_+^{m_N-s_N}} (\lambda^N),
    \end{array}\right.
\end{equation}
 where $N_{\mathbb{R}^{s_k} \times \mathbb{R}_+^{m_k-s_k}} (\lambda^k)$ is the normal cone to $\mathbb{R}^{s_k} \times \mathbb{R}_+^{m_k-s_k}$ at $\lambda^k$ for $k=1,...,N$ (see \citet[Definition 6.3]{rockafellar2009variational}). The linearization of \eqref{NEP-KKT} at a given point $(\bp,\bx,\blam)$ is the following generalized equation system:
\begin{equation}\label{NEP-LKKT}
    \left\{\begin{array}{rl}
        v^1 &= \nabla_{x^1}L^1(\bar{x}^1,\bar{x}^{-1},\bar{\lambda}^1;\bar{w}^1)+ \sum\limits_{i=1}^{N} \nabla^2_{x^1 x^i}L^1(\bar{x}^1,\bar{x}^{-1},\bar{\lambda}^1;\bar{w}^1)(x^i-\bar{x}^{i})\\
        &+\mathcal{J}_{x^1}G^1(\bar{x}^1;\bar{w}^1)^{\mathrm{T}} (\lambda^1-\bar{\lambda}^1),\\
        &\vdots\\
        v^N &= \nabla_{x^N}L^N(\bar{x}^N,\bar{x}^{-N},\bar{\lambda}^N;\bar{w}^N)+ \sum\limits_{i=1}^{N} \nabla^2_{x^N x^i}L^N(\bar{x}^N,\bar{x}^{-N},\bar{\lambda}^N;\bar{w}^N)(x^i-\bar{x}^{i})\\
        &+\mathcal{J}_{x^N}G^N(\bar{x}^N;\bar{w}^N)^{\mathrm{T}} (\lambda^N-\bar{\lambda}^N),\\
        -u^1 &\in -G^1(\bar{x}^1;\bar{w}^1)-\mathcal{J}_{x^1}G^1(\bar{x}^1;\bar{w}^1)(x^1-\bar{x}^1)+N_{\mathbb{R}^{s_1} \times \mathbb{R}_+^{m_1-s_1}}(\lambda^1),\\
        &\vdots\\
        -u^N &\in -G^N(\bar{x}^N;\bar{w}^N)-\mathcal{J}_{x^N}G^N(\bar{x}^N;\bar{w}^N)(x^N-\bar{x}^N)+N_{\mathbb{R}^{s_N} \times \mathbb{R}_+^{m_N-s_N}}(\lambda^N),\\
    \end{array}\right.
\end{equation}
where $\mathcal{J}_{x^k}G^k(\bar{x}^k; \bar{w}^k)$ denotes the partial Jacobian matrix of $G^k$ at $(\bar{x}^k;\bar{w}^k)$ with respect to $x^k$ for $k=1,...,N$. The set-valued mapping assigning to each $(u,v)$ the solution set of \eqref{NEP-LKKT} is denoted by $\L$. Let $X_{\rm KKT}(p):= \{x(p) \,|\, \exists\, \lambda(p) \ {\rm s.\ t.}\ \left(x(p),\lambda(p)\right)\in S_{\rm KKT}(p)\} $. When $N=1$, $S_{\rm KKT}$ is the KKT solution mapping of a nonlinear programming (NLP) problem  with canonical perturbations. Let 
$
     \T: \mathbb{R}^m \times \mathbb{R}^n \rightarrow \mathbb{R}^n\times \mathbb{R}^m
$
be a set-valued mapping such that
$$
     \T(u,v) = S_{\rm KKT}(u,v,\bar{w}).
$$
Then $\T$ is actually the KKT solution mapping of an NEP with only tilt perturbations (see \citet{rockafellar2024generalized}).

To the best of our knowledge, there is limited research  regarding the stability of a standard NEP. In the literature \citep{kojima1985strongly}, Kojima et al. utilized the classical Kojima's theory (\citet{kojima1978studies}) of locally nonsingular ${\rm PC^1}$ functions to obtain a criterion for the continuous single-valued localization of $S_{\rm KKT}$ (see Definition \ref{def-localization} in Section \ref{Pre}) in multi-person noncooperative games, referred to as ``strong stability''. However, they only considered a specific type of Nash equilibrium problem: fixed linear constraints with structured objective functions.  The criterion proposed in \citet{kojima1985strongly} relied on the degree of Kojima's mapping, which was challenging to determine, thus limiting its applicability to general NEPs. Starting from variational analysis, \citet{rockafellar2018variational} provided a concise equivalent characterization of the strong regularity for $E(p)$ (see Definition \ref{def-strong} in Section \ref{Pre}) by graphical derivatives, where $E(p)$ denotes the Nash equilibria at the perturbation $p$. The KKT solution mapping is not considered in \citet{rockafellar2018variational}. Under convex assumptions, \citet{palomar2010convex} 
were the first to apply the degree theory to the Nikaido-Isoda (NI) function (see \citet{young2014handbook}) of an NEP, effectively obtaining the condition for the robustness of $E(p)$ by leveraging the NEP structure. However, their perturbation model omitted constraint perturbations for each player, and they did not establish the conditions for the robust isolated calmness of $S_{\rm KKT}$ (see Definition \ref{def_calm, isolated calmness} in Section \ref{Pre}).

We focus on three key properties in perturbation theory for Problem (\ref{NEP_perturbations}) at a KKT triple $(\bar{x},\bar{\lambda})$: the strong regularity of $S_{\rm KKT}$ at $\bar{p}$ for $(\bar{x},\bar{\lambda})$ (see Definition \ref{def-strong} in Section \ref{Pre}) , the continuously differentiable single-valued localization of $S_{\rm KKT}$ around $\bar{p}$ for $(\bar{x},\bar{\lambda})$ (see Definition \ref{def-localization} in Section \ref{Pre}), and the robust isolated calmness of $S_{\rm KKT}$ at $\bar{p}$ for $(\bar{x},\bar{\lambda})$ (see Definition \ref{def_calm, isolated calmness} in Section \ref{Pre}), as well as the stability of $\T$. The main contributions of this paper are summarized as follows:

\begin{enumerate}[i)]
    \item Without convex assumptions, we provide, for the first time, equivalent conditions for the strong regularity and the continuously differentiable single-valued localization of $S_{\rm KKT}$ by graphical derivatives, which can be utilized to assess the stability of practical problems. Additionally, to simplify the stability characterizations further, we present two sufficient conditions based on the equivalent characterizations. These sufficient conditions, in the case of a single player, align with the classical stability results in NLP (see \citet{dontchev2020characterizations,dontchev1996characterizations,robinson1975stability}). 
    \item We propose the I-property for the isolated calmness of $S_{\rm KKT}$ by variational analysis without convex assumptions, and the P-property for the robust isolated calmness of $S_{\rm KKT}$ under convex assumptions. The I-property together with the second-order necessary condition (SONC) reduces to the second-order sufficient condition (SOSC) when there is only one player. The P-property, which is stricter than the I-property under convex assumptions, has no existing analogs in the literature to our knowledge. The P-property is also sufficient for the robustness of $E(p)$ with a certain constraint qualification. It reduces to the SOSC when there is only one player. The characterization of robust isolated calmness is more challenging to derive compared to strong regularity and continuously differentiable single-valued localization, as it ensures the existence of the NEs near a known NE without compactness, despite it being the weakest stability requirement for $S_{\rm KKT}$.  
    \item We remark that all stability properties mentioned in Section \ref{Pre} for $\T$ can be deduced for $S_{\rm KKT}$. It is crucial to emphasize that all perturbation models with equality and inequality constraints, to our knowledge, are concerned with full perturbations, where the perturbation parameter $w$ is not fixed (\citet{dontchev2020characterizations,dontchev1996characterizations,robinson1980strongly}). However, in the context of semi-definite programming (SDP) and second-order cone programming (SOCP), typical perturbation models involve only tilt perturbations (\citet{bonnans2005perturbation,ding2017characterization}). Consequently, a natural question arises concerning NEPs. If $\T$ is strongly regular, or possesses robust isolated calmness, and so on, does the same hold for $S_{\rm KKT}$? The answer is affirmative. We provide detailed discussions in the subsequent sections.
\end{enumerate}
The proofs mainly employ two mathematical tools: variational analysis and the degree theory. We refer to \citet{cho2006topological,facchinei2003finite,mordukhovich2015graphical,palomar2010convex,rockafellar2009variational} for more details. For better understanding, we note that the locally continuously single-valued localization of $S_{\rm KKT}$ implies the strong regularity of $S_{\rm KKT}$. In turn, the strong regularity of $S_{\rm KKT}$ implies the robust isolated calmness of $S_{\rm KKT}$.

While research on the stability of NEPs remains scarce, studying the stability of optimization problems holds significant practical value. It plays a pivotal role in the analysis of optimization algorithms (\citet{dontchev2009implicit}). The literature on perturbation analysis of general optimization problems is enormous, and even a short summary about the most important results is beyond the scope of this work. For recent works covering many topics in perturbation analysis, one may refer to \citet{dontchev2009implicit,facchinei2003finite,klatte2002nonsmooth,rockafellar2009variational} and reference therein. For classical models in mathematical programming, such as NLP, SDP, and SOCP, we refer to \citet{bonnans2005perturbation,ding2017characterization,dontchev2020characterizations,fiacco1990nonlinear,robinson1980strongly}.

The remaining parts of this paper are organized as follows. In Section \ref{Pre}, we introduce key definitions and preliminary results on variational analysis and the degree theory. In Section \ref{Strong}, we study the strong regularity of $S_{\rm KKT}$. In Section \ref{Diff}, the characterizations of the continuously differentiable single-valued localization of $S_{\rm KKT}$ are established. The criterion of the robust isolated calmness of $S_{\rm KKT}$ is provided in Section \ref{Robust}. Section \ref{QP} proposes detailed characterizations of the stability of an NEP in which each individual player solves a quadratic programming (QP) problem. We conclude our paper in Section \ref{Con}.

\section{Preliminaries}\label{Pre}
 
Since $S_{\rm KKT}$ is a set-valued mapping, to describe its continuity properties, let us recall some common notations and definitions. 
In this paper, unless causing ambiguity, we use $0$ to represent zero matrices and zero vectors. Let $\Phi :  \mathcal{X} \rightrightarrows \mathcal{Y}$ be a set-valued mapping with $ (\bar{a},\bar{b})\in {\rm gph}\, \Phi$, i.e., $\bar{b}\in \Phi (\bar{a})$, where ${\rm gph}\, \Phi$ denotes the graph of $\Phi$ (see \citet{rockafellar2009variational}). The vector spaces $\mathcal{X}$ and $\mathcal{Y}$ are finite-dimensional Euclidean spaces equipped with the norms $\Vert \cdot \Vert_\mathcal{X}$ and $\Vert \cdot \Vert_\mathcal{Y}$, respectively. 
Based on the equivalence of norms in finite-dimensional Euclidean spaces, without loss of generality, we do not explicitly distinguish between different norms in different spaces in this paper. Instead, we uniformly use $\Vert \cdot \Vert$ to represent them, unless specifically stated.  Let $\mathbb{B}$ be the unit ball in $\mathcal{Y}$. Then the set-valued mapping $\Phi$ is said to be lower semi-continuous (in Berge's sense \citep{berge1963espace}) at $\bar{a}$ for $\bar{b}$ if for any open neighborhood $V$ of $\bar{b}$, there exists an open neighborhood $U$ of $\bar{a}$ such that for all $a\in U$,
$
        \Phi  (a) \cap V \neq \emptyset.
$

There are three Lipschitz-like properties we are interested in for a set-valued mapping, including the Aubin property, isolated calmness and robust isolated calmness.

\begin{definition}\label{def_Aubin}
    A set-valued mapping $\Phi :  \mathcal{X} \rightrightarrows \mathcal{Y}$ is said to have the Aubin property at $\bar{a}\in \mathcal{X}$ for $\bar{b}\in \mathcal{Y}$ if $\bar{b}\in \Phi (\bar{a})$, the graph of $\Phi$ is locally closed at $ (\bar{a},\bar{b})$, and there is a constant $\kappa\ge 0$ along with open neighborhoods $U$ of $\bar{a}$ and $V$ of $\bar{b}$ such that 
    $$
        \Phi (a^\prime)\cap V \subset \Phi (a)+ \kappa \Vert a^\prime -a \Vert \mathbb{B}\quad  {\rm for}\ {\rm all}\ a,a^\prime \in U. 
    $$
\end{definition}

\begin{definition}\label{def_calm, isolated calmness}
    A set-valued mapping $\Phi :  \mathcal{X} \rightrightarrows \mathcal{Y}$ is said to be calm at $\bar{a}$ for $\bar{b}$ if $ (\bar{a},\bar{b})\in {\rm gph}\, \Phi$, and there is a constant $\kappa\ge 0$ along with open neighborhoods $U$ of $\bar{a}$ and $V$ of $\bar{b}$ such that 
    $$
    \Phi  (a) \cap V \subset \Phi (\bar{a}) + \kappa \Vert a-\bar{a} \Vert \mathbb{B} \quad {\rm for}\ {\rm all}\ a\in U.
    $$
    The set-valued mapping $\Phi$ is said to be isolated calm at $\bar{a}$ for $\bar{b}$ if $ (\bar{a},\bar{b})\in {\rm gph}\, \Phi$, and there is a constant $\kappa\ge 0$ along with open neighborhoods $U$ of $\bar{a}$ and $V$ of $\bar{b}$ such that
    $$
    \Phi  (a) \cap V \subset \left\{\bar{b}\right\} + \kappa \Vert a-\bar{a} \Vert \mathbb{B} \quad {\rm for}\ {\rm all}\ a\in U.
    $$
     Furthermore, if for each $a\in U$, $\Phi  (a) \cap V \neq \emptyset$, $\Phi$ is said to be robust isolated calm at $\bar{a}$ for $\bar{b}$.
\end{definition}

The Aubin property defined in Definition \ref{def_Aubin} was designated ``pseudo-Lipschitzian'' by \citet{aubin1984lipschitz}. The calmness for the set-valued mapping $\Phi$ comes from \citet{dontchev2009implicit} and it was called ``upper Lipschitzian'' by \citet{robinson2009generalized}. The isolated calmness (\citet{ding2017characterization,dontchev2009implicit}) for the set-valued mapping $\Phi$ was referred to differently in the literature, e.g., the local upper-Lipschitz continuity in \citet{dontchev2020characterizations}. Moreover, the robust isolated calmness from \citet{ding2017characterization}, was also referred to \citet{dontchev2020characterizations} as locally nonempty-valued property with local upper-Lipschitz continuity.

\begin{remark}\label{iso no robust}
    It is worth noting that $\Phi$ being isolated calm at $\bar{a}$ for $\bar{b}$ does not imply the robust isolated calmness of $\Phi$ at $\bar{a}$ for $\bar{b}$ (see \citet{mordukhovich2015graphical}).
\end{remark}

For a set-valued mapping $\Phi$, there is also a very special property, namely, its graph is the same as that of a single-valued mapping locally. Based on this, we provide the following definition.

\begin{definition}[\citet{dontchev2009implicit}]\label{def-localization}
    For a set-valued mapping $\Phi$, we say that $\Phi$ has a single-valued localization around $\bar{a}$ for $\bar{b}$ if $\bar{b}\in \Phi (\bar{a})$, and there exist neighborhoods $U$ of $\bar{a}$, $V$ of $\bar{b}$ such that ${\rm gph}\, \Phi \cap U\times V$ is a graph of a single-valued mapping.
\end{definition}

Similar to Robinson's implicit function theorem in \citet{robinson1980strongly}, Dontchev and Rockafellar extend it to a generalized equation \citep[Theorem 3F.9]{dontchev2009implicit}. This, combined with the local monotone property of $L_{\rm KKT}$  (\citet[Theorem 3G.5]{dontchev2009implicit}), implies the following lemma.

\begin{lemma}\label{lemma-Aubin}
    The following are equivalent:
    \begin{enumerate}[i)]
        \item $S_{\rm KKT}$ has the Aubin property at $\bar{p}$ for $ (\bar{x},\bar{\lambda})$;
        \item $L_{\rm KKT}$ has the Aubin property at $(\bu,\bv)$ for $ (\bar{x},\bar{\lambda})$;
        \item  $L_{\rm KKT}$ has a Lipschitz continuous single-valued localization around $(\bu,\bv)$ for $ (\bar{x},\bar{\lambda})$;
        \item $S_{\rm KKT}$ has a Lipschitz continuous single-valued localization around $\bar{p}$ for $ (\bar{x},\bar{\lambda})$.
    \end{enumerate}
\end{lemma}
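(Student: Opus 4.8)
The plan is to prove Lemma~\ref{lemma-Aubin} by chaining together two known results cited in the excerpt: the Aubin-property version of Robinson's implicit function theorem \citep[Theorem 3F.9]{dontchev2009implicit} and the local monotonicity property of $L_{\rm KKT}$ \citep[Theorem 3G.5]{dontchev2009implicit}. The overall strategy is to observe that $S_{\rm KKT}$ is obtained from $L_{\rm KKT}$ via a perturbation in which the parameter $w$ enters the data smoothly, so the Aubin property transfers in both directions; and that for $L_{\rm KKT}$, which is the solution mapping of a generalized equation whose single-valued part is $C^1$ and whose set-valued part is a (locally) monotone normal-cone operator, the Aubin property is in fact equivalent to the existence of a Lipschitz single-valued localization.

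First I would carry out the equivalence i) $\Leftrightarrow$ ii). By definition $T_{\rm KKT}(u,v) = S_{\rm KKT}(u,v,\bar w)$, but more is true: the full generalized equation \eqref{NEP-KKT} depends on $p=(u,v,w)$ through a map that is $C^1$ in $(x,\lambda,w)$ and affine in $(u,v)$. Freezing $w=\bar w$ is a Lipschitzian (indeed smooth) change of parameter, and conversely $S_{\rm KKT}(u,v,w)$ can be viewed, for $w$ near $\bar w$, as $L_{\rm KKT}$-type data with the extra smooth dependence on $w$ folded into the objective and constraint functions. The standard fact that the Aubin property is preserved under composition with a Lipschitz (here $C^1$) map and that adding a smoothly parametrized perturbation to the right-hand side of a generalized equation does not affect the Aubin property at the reference point then yields i) $\Leftrightarrow$ ii). Here I would invoke \citep[Theorem 3F.9]{dontchev2009implicit} applied to the canonically perturbed generalized equation to get that the Aubin property of the solution map with respect to $(u,v)$ is equivalent to that with respect to all of $p$, exactly because the $w$-block enters through a $C^1$ data map.

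Next I would handle ii) $\Leftrightarrow$ iii) and iii) $\Leftrightarrow$ iv). The implication iii) $\Rightarrow$ ii) is immediate from Definition~\ref{def_Aubin} (a Lipschitz single-valued localization trivially satisfies the Aubin inclusion). For ii) $\Rightarrow$ iii), the key is that $L_{\rm KKT}$ solves a generalized equation of the form $0 \in F(z) + N_K(z)$ with $F$ being $C^1$ (the linearized Lagrangian system) and $N_K$ a maximal monotone normal-cone operator, so the associated variational inequality is locally monotone near $(\bar x,\bar\lambda)$ by \citep[Theorem 3G.5]{dontchev2009implicit}; for such locally monotone generalized equations the Aubin property of the solution mapping is known to upgrade automatically to a Lipschitz single-valued localization (this is the content combining Theorems 3F.9 and 3G.5 of \citet{dontchev2009implicit}, which is precisely what the excerpt says this lemma ``implies''). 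Finally iii) $\Leftrightarrow$ iv) follows the same pattern as i) $\Leftrightarrow$ ii): a Lipschitz single-valued localization of $L_{\rm KKT}$ around $(\bar u,\bar v)$ is equivalent to one of $S_{\rm KKT}$ around $\bar p$, since the $w$-dependence is smooth and can be composed in or out without destroying single-valuedness or Lipschitz continuity.

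I expect the main obstacle to be the careful bookkeeping in i) $\Leftrightarrow$ ii) and iii) $\Leftrightarrow$ iv): one must verify rigorously that the extra perturbation block $w$, although it enters \emph{nonlinearly} into \eqref{NEP-KKT} (through $f^k(\cdot;w^k)$ and $g^k_i(\cdot;w^k)$), still behaves like a ``good'' ($C^1$, hence locally Lipschitz) parameter so that Theorem 3F.9 applies and the Aubin property / Lipschitz localization is neither created nor destroyed by fixing $w=\bar w$. This is where one invokes the $C^2$ assumptions on $f^k$ and $g^k_i$ to ensure the requisite smoothness of the data map. The monotonicity argument (via Theorem 3G.5) for the $L_{\rm KKT}$ side is more routine, since $L_{\rm KKT}$ is exactly the linearized KKT operator for which local monotonicity is already recorded in \citet{dontchev2009implicit}.
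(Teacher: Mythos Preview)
Your proposal is correct and follows exactly the paper's approach: the paper does not prove Lemma~\ref{lemma-Aubin} in detail but merely states that it follows from \citep[Theorem 3F.9]{dontchev2009implicit} together with the local monotone property of $L_{\rm KKT}$ from \citep[Theorem 3G.5]{dontchev2009implicit}, and your plan---using 3F.9 for i)$\Leftrightarrow$ii) and iii)$\Leftrightarrow$iv) via the smooth $w$-dependence, and 3G.5 for the upgrade ii)$\Leftrightarrow$iii) via local monotonicity---is precisely a fleshing-out of those two citations.
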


\begin{definition}\label{def-strong}
    For a set-valued mapping $\Phi$, we say that $\Phi$ is strongly regular at $\bar{a}$ for $\bar{b}$ if $\bar{b}\in \Phi (\bar{a})$, and $\Phi$ has a Lipschitz continuous single-valued localization around $\bar{a}$ for $\bar{b}$.
\end{definition}

Based on Lemma \ref{lemma-Aubin} and Definition \ref{def-strong}, it is clear that $S_{\rm KKT}$ has a Lipschitz continuous single-valued localization around $\bar{p}$ for $ (\bar{x},\bar{\lambda})$ if and only if $L_{\rm KKT}$ shares a similar property. The isolated calmness of $S_{\rm KKT}$ and $L_{\rm KKT}$ also exhibits a similar equivalence  (\citet[Corollary 2.3]{dontchev2020characterizations}).

\begin{lemma}\label{lem-iso}
    The following are equivalent:
    \begin{enumerate}[i)]
        \item $S_{\rm KKT}$ is isolated calm at $\bar{p}$ for $ (\bar{x},\bar{\lambda})$;
        \item There exists an open neighborhood $V\times W$ such that $L_{\rm KKT}(\bu,\bv)\cap V\times W = \left\{ (\bar{x},\bar{\lambda})\right\}$.
    \end{enumerate}
\end{lemma}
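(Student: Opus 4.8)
The plan is to recognize \eqref{NEP-KKT} as a canonically perturbed generalized equation whose set-valued part is the normal-cone mapping of a polyhedral convex set, and then to exploit the resulting coincidence between isolated calmness of its solution map and a singleton-localization property of its linearization \eqref{NEP-LKKT}; for the tilt-only mapping $\T$ this coincidence is precisely what \citet[Corollary 2.3]{dontchev2020characterizations} supplies, the $N$-player case differing from the single-player NLP case treated there only in that the smooth data now carries the extra cross blocks $\nabla^2_{x^k x^i}L^k$, which changes nothing structurally. Concretely, writing $z:=(x,\lambda)$, $\mathcal{K}:=\R^n\times\prod_{k=1}^N(\R^{s_k}\times\R_+^{m_k-s_k})$ and $\mathcal{N}(z):=\{0\}^n\times\prod_{k=1}^N N_{\R^{s_k}\times\R_+^{m_k-s_k}}(\lambda^k)$, the system \eqref{NEP-KKT} reads $(v,-u)\in\Psi(z;w)+\mathcal{N}(z)$ with $\Psi$ stacking the maps $z\mapsto\nabla_{x^k}L^k(x^k,x^{-k},\lambda^k;w^k)$ and $z\mapsto -G^k(x^k;w^k)$; here $\Psi$ is $C^1$ in $z$, $\mathcal{K}$ is polyhedral convex, and \eqref{NEP-LKKT} is exactly the partial linearization of this generalized equation in $z$ at $(\bx,\blam;\bw)$ with $w$ held at $\bw$.

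First I would reduce to tilt perturbations, i.e.\ show that $\S$ is isolated calm at $\bp$ for $(\bx,\blam)$ if and only if $\T$ is isolated calm at $(\bu,\bv)$ for $(\bx,\blam)$. The implication ``$\Rightarrow$'' is immediate, since $\gph\T$ is the slice $\{w=\bw\}$ of $\gph\S$. For ``$\Leftarrow$'', fix a bounded neighborhood of $(\bx,\blam)$ on which $w\mapsto\Psi(\cdot;w)$ is Lipschitz with some modulus $\ell$ and on which the isolated calmness of $\T$ holds with modulus $\kappa$; if $z$ lies in that neighborhood and $z\in\S(u,v,w)$, then since $(v,-u)-\bigl(\Psi(z;w)-\Psi(z;\bw)\bigr)\in\Psi(z;\bw)+\mathcal{N}(z)$ we have $z\in\T(\tu,\tv)$ for some $(\tu,\tv)$ with $\|(\tu,\tv)-(u,v)\|=\|\Psi(z;w)-\Psi(z;\bw)\|\le\ell\|w-\bw\|$, whence $\|z-(\bx,\blam)\|\le\kappa\|(\tu,\tv)-(\bu,\bv)\|\le\kappa\bigl(\|(u,v)-(\bu,\bv)\|+\ell\|w-\bw\|\bigr)$, which is bounded by a constant times $\|p-\bp\|$. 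Hence $\S$ is isolated calm at $\bp$ for $(\bx,\blam)$.

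It then remains to show that $\T$ is isolated calm at $(\bu,\bv)$ for $(\bx,\blam)$ if and only if $\L(\bu,\bv)\cap(V\times W)=\{(\bx,\blam)\}$ for some neighborhood $V\times W$. Because $(\bx,\blam)\in\S(\bp)$, the point $(\bx,\blam)$ solves \eqref{NEP-LKKT} at $(\bu,\bv)$, and by the reduction lemma for normal cones of polyhedral sets the local fibre $\L(\bu,\bv)$ at $(\bx,\blam)$, written in displacement coordinates $d=z-(\bx,\blam)$, coincides with the solution set at $0$ of the homogenized generalized equation $0\in\nabla_z\Psi(\bx,\blam;\bw)\,d+D\mathcal{N}\bigl((\bx,\blam)\,|\,(\bv,-\bu)-\Psi(\bx,\blam;\bw)\bigr)(d)$, where $D\mathcal{N}$ is the graphical derivative of $\mathcal{N}$ (which, $\mathcal{K}$ being polyhedral, is the normal-cone mapping of the associated critical cone). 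On the other hand, the chain rule for graphical derivatives of solution maps of $C^1$ generalized equations identifies this same solution set with $D\T\bigl((\bu,\bv)\,|\,(\bx,\blam)\bigr)(0)$, and by the Levy--Rockafellar criterion (see \citet{dontchev2009implicit}) $\T$ is isolated calm at $(\bu,\bv)$ for $(\bx,\blam)$ if and only if $D\T\bigl((\bu,\bv)\,|\,(\bx,\blam)\bigr)(0)=\{0\}$. Chaining these equivalences with the previous paragraph yields (i) $\Leftrightarrow$ (ii).

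The step I expect to be the main obstacle is the last one: identifying the localized reference fibre of the affine mapping $\L$ and the graphical derivative $D\T((\bu,\bv)\,|\,(\bx,\blam))(0)$ with the solution set of one and the same homogenized affine generalized equation. This rests on the polyhedral reduction lemma and on the graphical-derivative chain rule, and requires some bookkeeping with the critical cone induced by the product structure of $\mathcal{K}$ and by the $N$-player cross blocks in $\nabla_z\Psi$; however, none of this departs from the single-player NLP template of \citet[Corollary 2.3]{dontchev2020characterizations}, so it can in fact be quoted rather than re-proved, and the only ingredient genuinely specific to the present full-canonical-perturbation model is the elementary reduction to $\T$ carried out in the second paragraph.
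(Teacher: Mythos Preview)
Your proposal is correct. The paper does not supply its own proof of this lemma; it simply quotes \citet[Corollary~2.3]{dontchev2020characterizations}, which is stated for generalized equations with a full canonical parameter $p=(u,v,w)$ (the smooth part depending $C^1$ on both $z$ and $w$), so it applies directly to $\S$. Consequently your preliminary reduction from $\S$ to the tilt-only mapping $\T$, while correct, is not needed here---and in fact the paper runs the logic the other way: its Corollary~\ref{coro-iso-til} (the equivalence of isolated calmness for $\S$ and $\T$) is derived \emph{from} this lemma, not used to prove it. Your second step (the Levy--Rockafellar criterion together with the polyhedral reduction lemma identifying the local fibre of $\L$ with $D\T\bigl((\bu,\bv)\,|\,(\bx,\blam)\bigr)(0)$) is precisely the content of the cited corollary, so you are reconstructing in detail what the paper merely quotes.
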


Different from $S_{\rm KKT}$, $L_{\rm KKT}$ can be reformulated as an inverse of a set-valued mapping. Consequently, we are mainly focusing on the stability of $L_{\rm KKT}$ instead of $S_{\rm KKT}$. 
 For a set-valued mapping $\Psi : \mathcal{Y} \rightrightarrows \mathcal{X}$ and a pair $ (\bar{b},\bar{a}) \in {\rm gph}\,   \Psi$ at which ${\rm gph}\,  \Psi$  is locally closed, the graphical coderivative of $\Psi$ at $\bar{b}$ for $\bar{a}$ is the mapping $D^* \Psi (\bar{b}\, | \,  \bar{a}) : \mathcal{X} \rightrightarrows \mathcal{Y}$ defined by
$$
        \mu\in D^* \Psi (\bar{b}\, | \,\bar{a}) (\nu) \quad \Longleftrightarrow \quad  (\mu,-\nu)\in N_{{\rm gph}\, \Psi} (\bar{b},\bar{a}).
$$

\begin{lemma}[\citet{rockafellar2009variational}]\label{lem-gra}
    For a mapping $\Psi : \mathcal{Y} \rightrightarrows \mathcal{X}$ and a pair $ (\bar{b},\bar{a}) \in {\rm gph}\,  \Psi$ at which ${\rm gph}\, \Psi$  is locally closed, $\Psi^{-1}$ has the Aubin property at $\bar{a}$ for $\bar{b}$ if and only if 
    $$
        0\in D^* \Psi ( \bar{b}\, | \, \bar{a}) (\nu) \Longrightarrow \nu=0.
    $$

\end{lemma}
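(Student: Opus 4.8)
The plan is to read this lemma off from the Mordukhovich (coderivative) criterion for the Aubin property, applied to the inverse mapping, together with the coderivative rule for inverses. First I would observe that $\gph\Psi^{-1}=\{(x,y)\in\mathcal{X}\times\mathcal{Y}:(y,x)\in\gph\Psi\}$ is the image of $\gph\Psi$ under the coordinate-flip homeomorphism of $\mathcal{Y}\times\mathcal{X}$ onto $\mathcal{X}\times\mathcal{Y}$, so $\gph\Psi^{-1}$ is locally closed at $(\bar a,\bar b)$ precisely because $\gph\Psi$ is locally closed at $(\bar b,\bar a)$. Hence the coderivative criterion applies to the mapping $F:=\Psi^{-1}$ at $\bar a$ for $\bar b$, and it gives: $\Psi^{-1}$ has the Aubin property at $\bar a$ for $\bar b$ if and only if $D^*\Psi^{-1}(\bar a\,|\,\bar b)(0)=\{0\}$.

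Second, I would rewrite this last condition in terms of $\Psi$ itself. The same coordinate flip turns $N_{\gph\Psi^{-1}}(\bar a,\bar b)$ into $\{(\xi,\eta):(\eta,\xi)\in N_{\gph\Psi}(\bar b,\bar a)\}$, so from the definition of the coderivative one gets
$$
\mu\in D^*\Psi^{-1}(\bar a\,|\,\bar b)(\nu)\ \Longleftrightarrow\ (-\nu,\mu)\in N_{\gph\Psi}(\bar b,\bar a)\ \Longleftrightarrow\ -\nu\in D^*\Psi(\bar b\,|\,\bar a)(-\mu).
$$
Taking $\nu=0$ and letting $\mu$ range over $\mathcal{X}$ shows $D^*\Psi^{-1}(\bar a\,|\,\bar b)(0)=\{\mu:0\in D^*\Psi(\bar b\,|\,\bar a)(-\mu)\}$, and this set equals $\{0\}$ exactly when $0\in D^*\Psi(\bar b\,|\,\bar a)(\nu)$ forces $\nu=0$. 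Combined with the previous step, this is the claimed equivalence; everything here is purely formal once the criterion is in hand.

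The substantive ingredient — and the main obstacle — is the coderivative criterion itself: for a mapping $F$ whose graph is locally closed at $(\bar x,\bar u)\in\gph F$, $F$ has the Aubin property at $\bar x$ for $\bar u$ if and only if $D^*F(\bar x\,|\,\bar u)(0)=\{0\}$. I would invoke it from \citet[Theorem 9.40]{rockafellar2009variational}. Were a self-contained argument required, its necessity half is the milder one: the Lipschitz-type inclusion defining the Aubin property bounds the regular normals to $\gph F$ at nearby graph points, and passing to limits forces $x^*=0$ whenever $(x^*,0)\in N_{\gph F}(\bar x,\bar u)$ (in fact it yields the sharper identity ${\rm lip}\,F(\bar x\,|\,\bar u)=|D^*F(\bar x\,|\,\bar u)|^+$). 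The sufficiency half is where the difficulty lies: assuming the Aubin estimate fails for every modulus, I would apply Ekeland's variational principle to manufacture near-optimal points $(x_\nu,u_\nu)\to(\bar x,\bar u)$ on $\gph F$, use a fuzzy sum rule (or the extremal principle) at those points to extract regular normals $(x_\nu^*,y_\nu^*)$ to $\gph F$ whose domain-space component $x_\nu^*$ tends to $0$ while $\Vert y_\nu^*\Vert=1$, and then appeal to the outer semicontinuity of the limiting normal-cone mapping (using local closedness of $\gph F$) to pass to a subsequential limit $(0,y^*)$ with $\Vert y^*\Vert=1$ — i.e. a nonzero $\nu$ with $0\in D^*F(\bar x\,|\,\bar u)(\nu)$, contradicting the hypothesis. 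Controlling the bookkeeping in this final limiting argument, so that the penalized points and their normals converge in the right way, is the crux of the whole statement.
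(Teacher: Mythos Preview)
Your argument is correct and is the standard derivation: apply the Mordukhovich coderivative criterion to $\Psi^{-1}$ and then convert $D^*\Psi^{-1}(\bar a\,|\,\bar b)(0)=\{0\}$ into the stated condition on $D^*\Psi(\bar b\,|\,\bar a)$ via the graph-flip identity for normals. The paper does not supply its own proof of this lemma; it simply quotes it from \citet{rockafellar2009variational}, so there is nothing further to compare.
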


\begin{lemma}[\citet{rockafellar2009variational}]\label{lem-f+g gra}
    For a function $\psi: \mathcal{Y} \rightarrow \mathcal{X}$ which is continuously differentiable around $\bar{b}$ and a set-valued mapping $\Psi: \mathcal{Y} \rightrightarrows \mathcal{X}$ with $\bar{b}\in \Psi (\bar{a})$, 
    $$
    D^*  \left(\psi+\Psi\right) \left(\bar{b} \, | \, \psi (\bar{b})+\bar{a}\right) (\nu) = \mathcal{J}_b \psi (\bar{b})^{\mathrm{T}}\nu + D^* \Psi  (\bar{b}\, | \, \bar{a}) (\nu)\quad {\rm for}\ {\rm all}\ \nu\in \mathcal{X}.
    $$
\end{lemma}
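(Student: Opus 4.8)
The plan is to realize $\gph(\psi+\Psi)$ as the image of $\gph\Psi$ under an explicit $C^1$ diffeomorphism of the product space $\mathcal{Y}\times\mathcal{X}$, and then to transport the coderivative computation through the change-of-coordinates rule for normal cones from \citet{rockafellar2009variational}.

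First I would introduce the map $\Theta:\mathcal{Y}\times\mathcal{X}\to\mathcal{Y}\times\mathcal{X}$ given by $\Theta(b,z):=(b,\,z+\psi(b))$. Since $\psi$ is continuously differentiable near $\bar b$, $\Theta$ is a $C^1$ diffeomorphism of a neighborhood of $(\bar b,\bar a)$ onto a neighborhood of $(\bar b,\psi(\bar b)+\bar a)$, with inverse $\Theta^{-1}(b,z)=(b,z-\psi(b))$ and block-triangular Jacobian at $(\bar b,\bar a)$ whose diagonal blocks are identities and whose lower-left block is $\mathcal{J}_b\psi(\bar b)$, so that
\[
\mathcal{J}\Theta(\bar b,\bar a)^{-\mathrm{T}}=\begin{pmatrix} I & -\mathcal{J}_b\psi(\bar b)^{\mathrm{T}}\\ 0 & I\end{pmatrix}.
\]
A one-line check gives $\gph(\psi+\Psi)=\Theta(\gph\Psi)$: indeed $z\in\psi(b)+\Psi(b)\Longleftrightarrow z-\psi(b)\in\Psi(b)\Longleftrightarrow(b,z-\psi(b))\in\gph\Psi\Longleftrightarrow(b,z)\in\Theta(\gph\Psi)$. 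In particular $\gph(\psi+\Psi)$ is locally closed at $(\bar b,\psi(\bar b)+\bar a)$, since $\gph\Psi$ is locally closed at $(\bar b,\bar a)$ and $\Theta$ is a homeomorphism near that point.

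Next I would invoke the invariance of the normal cone under a $C^1$ diffeomorphism, $N_{\Theta(C)}(\Theta(\bar c))=\mathcal{J}\Theta(\bar c)^{-\mathrm{T}}N_{C}(\bar c)$, applied with $C=\gph\Psi$ and $\bar c=(\bar b,\bar a)$, to obtain
\[
N_{\gph(\psi+\Psi)}\bigl(\bar b,\psi(\bar b)+\bar a\bigr)=\begin{pmatrix} I & -\mathcal{J}_b\psi(\bar b)^{\mathrm{T}}\\ 0 & I\end{pmatrix}N_{\gph\Psi}(\bar b,\bar a).
\]
Then I would simply unwind the definition of the coderivative. For a fixed $\nu\in\mathcal{X}$ one has $\mu\in D^*(\psi+\Psi)(\bar b\,|\,\psi(\bar b)+\bar a)(\nu)$ iff $(\mu,-\nu)$ lies in the left-hand side above, i.e.\ iff there is $(\xi,\eta)\in N_{\gph\Psi}(\bar b,\bar a)$ with $\mu=\xi-\mathcal{J}_b\psi(\bar b)^{\mathrm{T}}\eta$ and $\eta=-\nu$; this is equivalent to $\xi\in D^*\Psi(\bar b\,|\,\bar a)(\nu)$ together with $\mu=\xi+\mathcal{J}_b\psi(\bar b)^{\mathrm{T}}\nu$, which is precisely $\mu\in\mathcal{J}_b\psi(\bar b)^{\mathrm{T}}\nu+D^*\Psi(\bar b\,|\,\bar a)(\nu)$. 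Every implication used is reversible, so the two sets coincide for each $\nu$, proving the claim.

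The only ingredient beyond routine bookkeeping is the diffeomorphism-invariance of the normal cone; that is the step I expect to require the most care, since one must confirm that $\Theta$ genuinely is a local $C^1$ diffeomorphism (this is exactly where the hypothesis $\psi\in C^1$ near $\bar b$ is used) and that the local-closedness assumption carries over to $\gph(\psi+\Psi)$. If a fully self-contained argument is preferred, the same block computation can first be performed at the level of regular (Fréchet) normal cones — where the transformation under a $C^1$ map with invertible derivative is exact — and then passed to the outer limit defining the limiting normal cone, and hence $D^*$.
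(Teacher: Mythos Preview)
Your argument is correct: realizing $\gph(\psi+\Psi)$ as the image of $\gph\Psi$ under the $C^1$ diffeomorphism $\Theta(b,z)=(b,z+\psi(b))$, applying the change-of-variables rule for limiting normal cones, and then unpacking the definition of $D^*$ is exactly the standard route to this sum rule, and all the bookkeeping you record (the block form of $\mathcal{J}\Theta(\bar b,\bar a)^{-\mathrm{T}}$, the equivalence chain for $(\mu,-\nu)$) checks out.

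There is nothing to compare against in the paper itself: the lemma is stated as a citation from \citet{rockafellar2009variational} and no proof is given. Your derivation is essentially the one underlying that reference, so nothing more is needed. One minor remark: the hypothesis ``$\bar b\in\Psi(\bar a)$'' in the paper's statement is evidently a typo for $\bar a\in\Psi(\bar b)$ (consistent with $\Psi:\mathcal{Y}\rightrightarrows\mathcal{X}$ and with how the coderivative $D^*\Psi(\bar b\,|\,\bar a)$ is defined just above); you have silently, and correctly, worked with the intended assumption.
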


For an NEP, it is clear that $N=1$ implies that there is only one player. Then $S_{\rm KKT}$ is simplified to the KKT solution mapping of a standard nonlinear programming problem. Thus, we present key results on NLP that are needed for our subsequent discussions. For more details on the second-order optimality conditions and the stability of NLP, see \citet{nocedal1999numerical} and \citet{dontchev2020characterizations}.

Under the case of $N=1$, we suppose that $n =n_1$, $s = s_1$, $d=d_1$, $m=m_1$, $p=p^1$, $x = x^1$, $f = f^1$, $g_i = g^1_i$ and $L = L^1$ for $i=1,...,m$ without loss of generality. Then the NEP is the following standard nonlinear programming problem with canonical perturbations
\begin{equation}\label{NLP_perturbations}
    \begin{array}{ll}
     \min\limits_{x\in \mathbb{R}^n} & f  (x;w)-\left< v,x\right>\\
       {\rm s.\ t.}& g_i (x;w) = u_i, \  i=1,...,s,\\
       & g_j  (x;w) \le u_j,\ j=s+1,...,m.\\
   \end{array}
\end{equation}
The ordinary Lagrangian function is 
$$
    L (x, \lambda ;w ) = f (x; w) + \sum_{i=1}^{s} \lambda_i g_i  (x;w) + \sum_{j=s+1}^{m}\lambda_j g_j  (x;w).
$$
Let 
$$
		G (x;w) = 
		\begin{pmatrix}
			g_1 (x;w)\\
			\vdots\\
			g_m (x;w)
		\end{pmatrix}.
$$
The solution set of 
\begin{equation}\label{NLP-KKT}
\left\{\begin{array}{rl}
    v & = \nabla_x L(x,\lambda;w),\\
    -u &\in -G(x;w) + N_{\mathbb{R}^{s} \times \mathbb{R}_+^{m-s}}(\lambda)
\end{array}\right.
\end{equation}
at $p$ is denoted by  $S^1_{\rm KKT} (p)$. 
We associate with the fixed point $(\bp,\bx,\blam)\in \gph\, \S^1$ the index sets $I_1,I_2,I_3$ in $\{1,2,...,m\}$ defined by
\begin{equation}\label{def-index}
\begin{array}{ll}
& I_1 : = \left\{i\in[s+1,m] \, | \, \bar{\lambda}_i > 0  = g_i  (\bar{x}, \bar{w})-\bar{u}_i  \right\}\bigcup \left\{ 1,...,s\right\},\\
& I_2 : = \left\{i\in[s+1,m] \, | \, \bar{\lambda}_i = 0  = g_i  (\bar{x}, \bar{w})-\bar{u}_i  \right\},\\
& I_3 : = \left\{i\in[s+1,m] \, | \, \bar{\lambda}_i = 0  > g_i  (\bar{x}, \bar{w})-\bar{u}_i  \right\}.
\end{array}
\end{equation}
Recall that the strict Mangasarian-Fromovitz constraint qualification (SMFCQ) holds at $ (\bar{p},\bar{x},\bar{\lambda})$ (see \citet{kyparisis1985uniqueness}) if 
\begin{enumerate}[i)]
    \item The vectors $\nabla_{x} g_i  (\bar{x};\bar{w})$ for $i\in I_1$ are linearly independent;
    \item There is a vector $y\in \mathbb{R}^n$ such that 
    $$
        \left\{\begin{array}{ll}
             \nabla_{x} g_i  (\bar{x};\bar{w})^{\mathrm{T}} y =0 \quad {\rm for}\ {\rm all}\ i\in I_1,\\
             \nabla_{x} g_i  (\bar{x};\bar{w})^{\mathrm{T}} y <0 \quad {\rm for}\ {\rm all}\ i\in I_2.
        \end{array}\right.
    $$
\end{enumerate}
It is known that the SMFCQ holds at $ (\bar{p},\bar{x},\bar{\lambda})$ if and only if there is a unique multiplier vector $\bar{\lambda}$ associated with $ (\bar{p},\bar{x})$  (see \citet{kyparisis1985uniqueness}). Define the critical cone at $(\bar{p},\bar{x},\bar{\lambda})$ as
$$
    \mathcal{C} (\bar{p},\bar{x},\bar{\lambda}): = 
    \left\{y\in \mathbb{R}^{n} \, \bigg{|}  \,
    \begin{array}{ll}    
    & \nabla_{x} g_i  (\bar{x};\bar{w})^{\mathrm{T}} y =0\quad {\rm for}\ {\rm all}\ i\in I_1 \\ 
    &\nabla_{x} g_i  (\bar{x};\bar{w})^{\mathrm{T}} y \le 0\quad {\rm for}\ {\rm all}\  i\in I_2
\end{array}\right\}.
$$
\begin{theorem}[\citet{nocedal1999numerical}]\label{thm-SOSC}
    Suppose that for some feasible point $\bar{x}\in \mathbb{R}^n$, there is a Lagrange multiplier vector $\bar{\lambda}$ such that the KKT conditions  (\ref{NLP-KKT}) are satisfied for $\bar{p}$. Suppose also that 
    \begin{equation}\label{SOSC}
        y^{\mathrm{T}} \nabla^2_{x x}L (\bar{x},\bar{\lambda};\bar{w})y >0 \quad {\rm for}\ {\rm all}\ 0\neq y\in \mathcal{C} (\bar{p},\bar{x},\bar{\lambda}).
    \end{equation}
    Then $\bar{x}$ is a strict local minimizer of (\ref{NLP_perturbations}).
\end{theorem}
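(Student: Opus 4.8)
\noindent The plan is to argue by contradiction along the classical lines for second-order sufficiency, working with the tilted Lagrangian at the fixed parameter $\bp=(\bu,\bv,\bw)$ (so the ``canonical perturbation'' structure enters only through the linear term $-\langle\bv,x\rangle$ in the objective). Suppose $\bx$ were not a strict local minimizer of \eqref{NLP_perturbations} at $\bp$. Then there is a sequence $\{x^j\}$ of points feasible for \eqref{NLP_perturbations} with $x^j\to\bx$, $x^j\neq\bx$, and $f(x^j;\bw)-\langle\bv,x^j\rangle\le f(\bx;\bw)-\langle\bv,\bx\rangle$. Write $x^j=\bx+t_jd^j$ with $t_j=\Vert x^j-\bx\Vert\downarrow 0$ and $\Vert d^j\Vert=1$; passing to a subsequence we may assume $d^j\to d$ with $\Vert d\Vert=1$.

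First I would show that $d\in\mathcal{C}(\bp,\bx,\blam)$. Taylor-expanding each constraint function at $\bx$, using feasibility of $x^j$, and dividing by $t_j$ before passing to the limit gives $\nabla_{x}g_i(\bx;\bw)^{\mathrm T}d=0$ for $i=1,\dots,s$ and $\nabla_{x}g_i(\bx;\bw)^{\mathrm T}d\le 0$ for every active inequality index $i\in (I_1\setminus\{1,\dots,s\})\cup I_2$; the same expansion of the objective gives $(\nabla_{x}f(\bx;\bw)-\bv)^{\mathrm T}d\le0$. Substituting the stationarity relation $\bv=\nabla_{x}L(\bx,\blam;\bw)$ and using $\blam_i\ge0$ together with complementarity ($\blam_i=0$ for $i\in I_2\cup I_3$), this last inequality becomes $-\sum_{i\in I_1}\blam_i\,\nabla_{x}g_i(\bx;\bw)^{\mathrm T}d\le0$; since each summand is $\ge0$, every summand vanishes, and because $\blam_i>0$ for the inequality indices in $I_1$ we conclude $\nabla_{x}g_i(\bx;\bw)^{\mathrm T}d=0$ for all $i\in I_1$. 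Hence $d\in\mathcal{C}(\bp,\bx,\blam)$ and $d\neq0$.

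Next I would pass to the second order through the Lagrangian. Set $\ell(x):=L(x,\blam;\bw)-\langle\bv,x\rangle$. For each feasible $x^j$, the objective inequality, the equality constraints $g_i(x^j;\bw)=\bu_i$ ($i\le s$), the inequalities $g_i(x^j;\bw)\le\bu_i$ ($i>s$) with $\blam_i\ge0$, and complementarity at $\bx$ together give $\ell(x^j)\le\ell(\bx)$. Since $\nabla\ell(\bx)=\nabla_{x}L(\bx,\blam;\bw)-\bv=0$ by stationarity, a second-order Taylor expansion yields $\tfrac12 t_j^2 (d^j)^{\mathrm T}\hes_{xx}L(\bx,\blam;\bw)d^j+o(t_j^2)\le 0$; dividing by $t_j^2$ and letting $j\to\infty$ gives $d^{\mathrm T}\hes_{xx}L(\bx,\blam;\bw)d\le0$ with $0\neq d\in\mathcal{C}(\bp,\bx,\blam)$, contradicting \eqref{SOSC}. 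Therefore $\bx$ is a strict local minimizer of \eqref{NLP_perturbations}.

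I expect the only delicate step to be the first one: extracting $d\in\mathcal{C}(\bp,\bx,\blam)$ requires careful bookkeeping of the active-set indices $I_1,I_2,I_3$ and exploiting stationarity plus complementarity to upgrade the inequalities $\nabla_{x}g_i(\bx;\bw)^{\mathrm T}d\le 0$ on $I_1$ into equalities. The second-order part is routine once $\ell(x^j)\le\ell(\bx)$ is established, which is itself a sign-counting argument using $\blam\ge0$ and complementarity.
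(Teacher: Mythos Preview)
Your argument is correct and is precisely the classical contradiction proof of second-order sufficiency found in the cited reference. The paper itself does not supply a proof of this theorem; it simply quotes the result from \citet{nocedal1999numerical} as background, so there is no alternative approach in the paper to compare against.
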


The condition in Theorem \ref{thm-SOSC} is referred to as the second-order sufficient condition for optimality. The strong second-order sufficient condition (SSOSC) for optimality holds at $(\bar{p},\bar{x},\bar{\lambda})$ if $y\in \mathcal{C} (\bar{p},\bar{x},\bar{\lambda})$ in  (\ref{SOSC}) is replaced by 
$$
y\in \left\{ y\in \mathbb{R}^n \mid \nabla_{x} g_i  (\bar{x};\bar{w})^{\mathrm{T}} y =0 \quad {\rm for}\ {\rm all}\ i\in I_1 \right\}.
$$

Let
$$
\mathcal{K} := \underbrace{\left\{0\right\} \times \cdots \times \left\{0\right\}}_{n} \times\, \mathbb{R}^{s_1}\times \mathbb{R}_+^{m_1-s_1}\times \cdots \times \mathbb{R}^{s_N}\times \mathbb{R}_+^{m_N-s_N}.
$$ 
To employ Lemma \ref{lem-gra} and \ref{lem-f+g gra} in establishing the strong regularity of $S_{\rm KKT}$, we need the exact characterization of $D^*N_{{\mathcal{K}}}\left((\bar{x},\bar{\lambda})\, | \, -\textbf{L}(\bar{x},\bar{\lambda};\bar{w})\right)$ (see the proof of Theorem \ref{thm-strong-eq}), which can be simplified to the computations of $N_{{\rm gph}\, N_{\mathbb{R}^{r_1} \times \mathbb{R}^{r_2}_+}}(a,b)$ for any $r_1,r_2\in \mathbb{N}_+$ with $(a,b)\in {\rm gph}\,  N_{\mathbb{R}^{r_1} \times \mathbb{R}^{r_2}_+} $.

\begin{proposition}[\citet{mordukhovich2007coderivative}]\label{prop-graOfN}
    Let $(a,b)\in {\rm gph}\, N_{\mathbb{R}^{r_1}\times \mathbb{R}^{r_2}_+}$. Then $(\mu,\nu)\in N_{{\rm gph}\, N_{\mathbb{R}^{r_1}\times \mathbb{R}^{r_2}_+}}(a,b)$ if and only if 
    $$
        \mu_i \in 
        \left\{\begin{array}{lll}
             &  \left\{0\right\} & {\rm when}\ i\in \left\{i\in [r_1+1,r_1+r_2]\, |\, a_i+b_i>0\right\} \bigcup \left\{ 1,...,r_1\right\},\\
             &  \left\{0\right\} \  {\rm or}\ \mathbb{R}_- \  {\rm or}\ \mathbb{R}& {\rm when}\ i\in \left\{i\in [r_1+1,r_1+r_2]\, |\, a_i+b_i=0\right\},\\
             & \mathbb{R} & {\rm when}\ i\in \left\{i\in [r_1+1,r_1+r_2]\, |\, a_i+b_i<0\right\},
        \end{array}\right.
    $$
    and 
    $$
        \nu_i \in 
        \left\{\begin{array}{lll}
             &  \mathbb{R} & {\rm when}\  i\in \left\{i\in [r_1+1,r_1+r_2]\, |\, a_i+b_i>0, \ {\rm or}\ a_i+b_i = 0\ {\rm with}\  \mu_i \in \left\{0\right\}\right\} \bigcup \left\{ 1,...,r_1\right\},\\
             &  \mathbb{R}_+  & {\rm when}\ i\in \left\{i\in [r_1+1,r_1+r_2]\, |\,  a_i+b_i = 0 \ {\rm with}\ \mu_i \in \mathbb{R}_-\right\}, \\
             & \left\{0\right\} & {\rm when}\  i\in \left\{i\in [r_1+1,r_1+r_2]\, |\, a_i+b_i<0, \ {\rm or}\ a_i+b_i = 0\ {\rm with}\ \mu_i\in \mathbb{R}\right\}.
        \end{array}\right.
    $$
\end{proposition}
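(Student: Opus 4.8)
The plan is to reduce the computation to the one-dimensional sets $\mathbb{R}$ and $\mathbb{R}_+$ by exploiting that $\mathbb{R}^{r_1}\times\mathbb{R}^{r_2}_+$ is a Cartesian product. First I would recall the elementary identity $N_{C\times C'}(a,a') = N_C(a)\times N_{C'}(a')$ for closed sets $C,C'$, and apply it coordinatewise to write $\mathbb{R}^{r_1}\times\mathbb{R}^{r_2}_+$ as a product of $r_1$ copies of $\mathbb{R}$ and $r_2$ copies of $\mathbb{R}_+$. After permuting the $2(r_1+r_2)$ coordinates of $\mathbb{R}^{r_1+r_2}\times\mathbb{R}^{r_1+r_2}$ so that each pair $(a_i,b_i)$ is grouped together --- an orthogonal change of variables that carries normal cones to normal cones --- the graph $\gph N_{\mathbb{R}^{r_1}\times\mathbb{R}^{r_2}_+}$ becomes the Cartesian product of $r_1$ copies of $\gph N_{\mathbb{R}}$ and $r_2$ copies of $\gph N_{\mathbb{R}_+}$. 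Then the product rule for limiting normal cones to a Cartesian product of closed sets (e.g.\ \citet[Proposition 6.41]{rockafellar2009variational}) yields
\[
N_{\gph N_{\mathbb{R}^{r_1}\times\mathbb{R}^{r_2}_+}}(a,b)\;=\;\prod_{i=1}^{r_1}N_{\gph N_{\mathbb{R}}}(a_i,b_i)\;\times\;\prod_{i=r_1+1}^{r_1+r_2}N_{\gph N_{\mathbb{R}_+}}(a_i,b_i),
\]
so everything reduces to computing the two one-dimensional building blocks and reading off the components $(\mu_i,\nu_i)$.

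For $i\le r_1$ the computation is trivial: $\gph N_{\mathbb{R}}=\mathbb{R}\times\{0\}$ is a linear subspace of $\mathbb{R}^2$, so $N_{\gph N_{\mathbb{R}}}(a_i,0)=\{0\}\times\mathbb{R}$, i.e.\ $\mu_i=0$ and $\nu_i$ free, which is exactly the clause for $i\in\{1,\dots,r_1\}$ in the statement. For $i>r_1$ one has $\gph N_{\mathbb{R}_+}=\{(t,0):t\ge0\}\cup\{(0,s):s\le0\}$, and I would distinguish the three cases that, by complementarity, are labelled by the sign of $a_i+b_i$: $a_i+b_i>0$ forces $a_i>0=b_i$; $a_i+b_i<0$ forces $0=a_i>b_i$; $a_i+b_i=0$ forces $a_i=b_i=0$. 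In the first two cases the graph locally coincides with one of the coordinate axes (again a subspace), so $N_{\gph N_{\mathbb{R}_+}}$ equals $\{0\}\times\mathbb{R}$ and $\mathbb{R}\times\{0\}$ respectively, matching the first and third clauses.

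The only case requiring real work is the corner $(a_i,b_i)=(0,0)$. Here the tangent cone is the set $\gph N_{\mathbb{R}_+}$ itself (it is a cone), so the regular normal cone is its polar, $\mathbb{R}_-\times\mathbb{R}_+$; the limiting normal cone is the outer limit of the regular normal cones at nearby points, which adjoins $\{0\}\times\mathbb{R}$ (limits along the half-axis $a_i>0$) and $\mathbb{R}\times\{0\}$ (limits along $b_i<0$), giving $N_{\gph N_{\mathbb{R}_+}}(0,0)=(\{0\}\times\mathbb{R})\cup(\mathbb{R}_-\times\mathbb{R}_+)\cup(\mathbb{R}\times\{0\})$. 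Describing this set by its first coordinate --- $\mu_i=0$ with $\nu_i$ free, or $\mu_i<0$ with $\nu_i\ge0$, or $\mu_i$ arbitrary with $\nu_i=0$ --- reproduces the $a_i+b_i=0$ clause of the statement. Inserting all three coordinatewise descriptions into the product formula finishes the argument.

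The step I expect to be the main obstacle is precisely this corner computation: one must use the Mordukhovich (limiting) normal cone rather than the Clarke-regular one, which forces a careful passage to the outer limit of regular normal cones over neighboring points, and one must keep strict versus non-strict inequalities on $\mu_i$ straight so that the three pieces of $N_{\gph N_{\mathbb{R}_+}}(0,0)$ are recorded without gap or double-counting. Alternatively, this one-dimensional fact can simply be invoked from \citet{mordukhovich2007coderivative}, in which case the proof collapses to the product-rule bookkeeping described above.
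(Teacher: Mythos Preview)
Your argument is correct. The paper does not supply a proof of this proposition at all: it is stated with attribution to \citet{mordukhovich2007coderivative} and used as a black box in the proof of Theorem~\ref{thm-strong-eq}. Your coordinatewise reduction via the product rule for limiting normal cones (the paper itself invokes \citet[Proposition 6.41]{rockafellar2009variational} for exactly this purpose one step later) and the explicit computation of $N_{\gph N_{\mathbb{R}_+}}$ at interior, strictly-normal, and corner points is the standard way to derive the result, and all three pieces of the corner cone $(\{0\}\times\mathbb{R})\cup(\mathbb{R}_-\times\mathbb{R}_+)\cup(\mathbb{R}\times\{0\})$ are accounted for correctly. In short, you have supplied a self-contained proof where the paper simply imports the result.
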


In the subsequent discussions, we aim to establish the conditions under which NEs exist around $\bar{p}$. In this context, the degree theory becomes indispensable (see \citet{facchinei2003finite,gibbons1992game,myerson2013game,nash2024non}). For comprehensive understanding of the degree theory, we refer to \citet{cho2006topological}. Let $V$ be an open bounded subset in $\mathbb{R}^n$, where ${\rm cl\,} V$ and ${\rm bd\,}V$ denote the closure and the boundary of $V$, respectively.
\begin{lemma}[\citet{cho2006topological}]\label{lem-deg}
    Let $V$ be an open bounded subset of $\mathbb{R}^n$, and let $H: {\rm cl\,} V \rightarrow \mathbb{R}^n$ be a continuous mapping. If $0\notin {\rm bd\,}V$, then there exists an integer ${\rm deg}\,(H,V,0)$ satisfying the following properties:
    \begin{enumerate}[i)]
        \item ${\rm deg}\, (I, V, 0)=1$ if and only if $0\in V$, where $I$ denotes the identity mapping;
        \item If ${\rm deg}\,(H,V,0)\neq 0$, then the equation $H(x) = 0$ has a solution in $V$;
        \item If $H_t(x): [0,1]\times {\rm cl\,} V \rightarrow \mathbb{R}^n$ is continuous and $0\notin \bigcup\limits_{t\in [0,1]} H_t({\rm bd\,}V)$, then ${\rm deg}\,(H_t(\cdot),V,0)$ does not depend on $t\in [0,1]$;
        \item If $\hat{H}: {\rm cl\,} V \rightarrow \mathbb{R}^n$ is continuous, then ${\rm deg}\,(H,V,0) = {\rm deg}\,(\hat{H},V,0)$ if 
        $$
            \max\limits_{x\in {\rm cl\,} V}\Vert H(x)-\hat{H}(x) \Vert_\infty < \inf\limits_{z\in H({\rm bd\,}V)} \Vert z \Vert_\infty,
        $$
        where $\Vert \cdot \Vert_{\infty}$ denotes the infinity norm in $\mathbb{R}^n$.
    \end{enumerate}
\end{lemma}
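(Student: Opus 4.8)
Since Lemma~\ref{lem-deg} is the standard construction-and-properties theorem for the Brouwer (topological) degree, the plan is to \emph{construct} ${\rm deg}\,(H,V,0)$ in three stages of increasing generality and then read off the four listed properties. \textbf{Stage 1 (smooth maps, regular value).} Suppose $H\in C^1({\rm cl\,}V;\mathbb{R}^n)$ and $0$ is a regular value of $H$, i.e. $\mathcal{J}H(x)$ is nonsingular at every $x\in H^{-1}(0)$. Since $0\notin H({\rm bd\,}V)$ and ${\rm cl\,}V$ is compact, $H^{-1}(0)$ is a compact subset of the open set $V$; by the inverse function theorem it is discrete, hence finite. Define
$$
{\rm deg}\,(H,V,0):=\sum_{x\in H^{-1}(0)}{\rm sgn}\,\det \mathcal{J}H(x),
$$
an integer, with the empty sum equal to $0$.

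\textbf{Stage 2 (smooth maps, arbitrary value).} For $H\in C^1$ with $\delta:={\rm dist}(0,H({\rm bd\,}V))>0$, Sard's theorem furnishes a regular value $y$ with $\Vert y\Vert<\delta$; set ${\rm deg}\,(H,V,0):={\rm deg}\,(H,V,y)$ for any such $y$. The essential point is that this is independent of $y$: if $y_0,y_1$ are regular values with $\Vert y_i\Vert<\delta$, then by convexity of the norm ball the segment joining them avoids $H({\rm bd\,}V)$, and one shows ${\rm deg}\,(H,V,y_0)={\rm deg}\,(H,V,y_1)$ via the analytic representation ${\rm deg}\,(H,V,y)=\int_V \varphi_\varepsilon(H(x)-y)\det \mathcal{J}H(x)\,dx$ for mollifiers $\varphi_\varepsilon$ with unit mass and sufficiently small support, together with a Stokes-type identity showing this integral is locally constant in $y$ on $\mathbb{R}^n\setminus H({\rm bd\,}V)$. \textbf{Stage 3 (continuous maps).} For continuous $H$ with $\delta>0$ as above, pick $\widetilde H\in C^1$ with $\max_{{\rm cl\,}V}\Vert H-\widetilde H\Vert<\delta$ and set ${\rm deg}\,(H,V,0):={\rm deg}\,(\widetilde H,V,0)$; independence of $\widetilde H$ follows by applying the Stage~2 argument to the linear homotopy between two such approximants.

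With the construction in place, the four properties follow. For (i), $I^{-1}(0)=\{0\}$, which is nonempty iff $0\in V$, and then $\det\mathcal{J}I=1$, so the sum equals $1$. For (ii), if $H(x)=0$ has no solution in ${\rm cl\,}V$ then $0\notin H({\rm cl\,}V)$, a nearby $C^1$ approximant also omits $0$, the defining sum is empty, and ${\rm deg}\,(H,V,0)=0$; this is the contrapositive of the claim. For (iii), reduce to the $C^1$ case by approximation; a $C^1$ homotopy $H_t$ with $0\notin H_t({\rm bd\,}V)$ makes $t\mapsto{\rm deg}\,(H_t,V,0)$ integer-valued and, through the integral representation, continuous in $t$, hence constant. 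For (iv), the homotopy $H_t:=(1-t)\hat{H}+tH$ satisfies, for $x\in{\rm bd\,}V$,
$$
\Vert H_t(x)\Vert_\infty\ge \Vert H(x)\Vert_\infty-\Vert H(x)-\hat{H}(x)\Vert_\infty\ge \inf_{z\in H({\rm bd\,}V)}\Vert z\Vert_\infty-\max_{x\in{\rm cl\,}V}\Vert H(x)-\hat{H}(x)\Vert_\infty>0,
$$
so $0\notin\bigcup_{t\in[0,1]}H_t({\rm bd\,}V)$, and (iii) gives ${\rm deg}\,(H,V,0)={\rm deg}\,(\hat{H},V,0)$.

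The main obstacle is the well-definedness in Stage~2: proving that the signed preimage count of a regular value is unchanged under perturbation of the value (and hence under change of $C^1$ approximant). This is the genuine content of degree theory; everything else is bookkeeping once it is secured. The cleanest route is the analytic one sketched above — show that $\int_V \varphi_\varepsilon(H(\cdot)-y)\det\mathcal{J}H\,dx$ computes the signed count, and that it is unchanged when $\varphi_\varepsilon$ is replaced by another mollifier of unit mass supported in the same connected component of $\mathbb{R}^n\setminus H({\rm bd\,}V)$, using that the difference of two such mollifiers is a divergence and that the corresponding integral against $\det\mathcal{J}H$ vanishes by a change-of-variables/Stokes argument. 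An alternative is the simplicial/homological construction, which bypasses Sard's theorem at the cost of more combinatorics.
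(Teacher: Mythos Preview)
The paper does not prove Lemma~\ref{lem-deg}; it is quoted from \citet{cho2006topological} as a standard result of Brouwer degree theory, with no argument supplied. Your proposal outlines the classical three-stage construction (regular-value case, Sard extension, continuous approximation) and derives the four properties from it; this is the textbook route and is essentially correct as a sketch, but it goes well beyond what the paper requires, since the authors simply invoke the reference. One small remark: in Stage~2 you gesture at the mollifier/Stokes argument for well-definedness without carrying it out, and you correctly flag this as the genuine obstacle; in a self-contained write-up that step would need to be filled in, but for the purposes of this paper the citation suffices.
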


\section{The strong regularity of $S_{\rm KKT}$}\label{Strong}

By \citet{dontchev2020characterizations}, 
for a nonlinear programming problem with canonical perturbations (\ref{NLP_perturbations}), 
the linear independence constraint qualification (LICQ) and the SSOSC for local optimality 
hold at $(\bar{p},\bar{x},\bar{\lambda})$ if and only if $S^1_{\rm KKT}$ has a 
Lipschitz continuous single-valued localization around $\bar{p}$ for $(\bar{x},\bar{\lambda})$ 
with $x(p)\in X_{\rm KKT}(p)$ being a local minimizer. Since $S_{\rm KKT}$ is composed of 
$N$ instances of $S^1_{\rm KKT}$, a natural question arises that does $S_{\rm KKT}$ 
have a Lipschitz continuous single-valued localization around $\bar{p}$ for $(\bar{x},\bar{\lambda})$ 
if the LICQ and the SSOSC hold for each player $k$ at $(\bar{p}^k,\bar{x}^k,\bar{\lambda}^k)$? 
The answer is negative.

	\begin{example}\label{NEP-exm-1}
        Consider the following NEP with canonical perturbations:
		\begin{equation}\label{perturbation_ex}
			\begin{array}{lllll}
			&{\rm Player}\ 1\quad
			\left\{\begin{array}{ll}
			  \min\limits_{x\in \mathbb{R}} & x^2+3xy-\varepsilon x\\
              {\rm s.\ t.} & x\ge 0,
			\end{array}\right.\qquad \qquad
            
			&{\rm Player}\ 2\quad
			\left\{\begin{array}{ll}
			  \min\limits_{y\in \mathbb{R}} & \frac{3}{2}y^2+2xy+2\varepsilon y\\
              {\rm s.\ t.} & y\le 0.
			\end{array}\right.
			\end{array}
			\end{equation}
	\end{example}
When $\varepsilon=0$, the optimization problem (\ref{perturbation_ex}) has a stationary point $(\bar{x},\bar{y})= (0,0)$. For each player, finding the best strategy is a strictly convex optimization problem. It is clear that the LICQ and the SSOSC hold at $(\bar{\varepsilon},\bar{x},\bar{\lambda})= (0,0,0)$ and $(\bar{\varepsilon},\bar{y},\bar{\mu})=(0,0,0)$. However, for a given $\varepsilon>0$, $(x(\varepsilon),y(\varepsilon))$ is a stationary point if and only if it is a solution to the following generalized equation system
\begin{equation}\label{ex1_KKT}
    \left\{\begin{array}{ll}
        2x(\varepsilon)+3y(\varepsilon)- \varepsilon-\lambda=0, \\
        3y(\varepsilon)+2x(\varepsilon)+ 2\varepsilon+\mu=0,\\
        0\le \lambda \perp x(\varepsilon) \ge 0,\\
        0\le \mu \perp -y(\varepsilon) \ge 0.
    \end{array}\right.
\end{equation}
The generalized equation system (\ref{ex1_KKT}) has no solutions, which implies that $S_{\rm KKT}$ does not have a Lipschitz continuous single-valued localization around $\bar{\varepsilon} = 0$ for $(\bar{x},\bar{y},\bar{\lambda},\bar{\mu})=(0,0,0,0)$. 

In fact, the SSOSC at $(\bar{p}^k,\bar{x}^k,\bar{\lambda}^k)$ is even not a necessary condition. 

\begin{example}\label{NEP-exm-2}
Consider the following NEP with canonical perturbations:
    \begin{equation}\label{perturbation_ex2}
        \begin{array}{lll}
            {\rm Player}\ 1\quad\ 
           & \min\limits_{x,y} & \frac{1}{2}(x+y)^2-xz-2yz-\eps x-2\eps y,
           ~\\
           {\rm Player}\ 2\quad\ 
           & \min\limits_{z} & \frac{1}{2}z^2-xz.
           \end{array}
        \end{equation}
\end{example}
When $\varepsilon = 0$, it is clear that $(\bx,\bar{y},\bar{z}) = (0,0,0) \in \S (\bar{\eps}) = \S (0)$. Based on the subsequent discussions (see Remark \ref{rmk-strong-eq}), $S_{\rm KKT}$ has a Lipschitz continuous single-valued localization around $\bar{\eps}$ for $(\bar{x},\bar{y},\bar{z})$. In fact, $\S(\eps) = (x(\eps),y(\eps),z(\eps)) = (-\eps,\eps,-\eps)$ for any $\eps \in \mathbb{R}$. But the SSOSC at $(\bar{x},\bar{y})= (0,0)$ for Player 1 fails to hold, which requires that the Hessian of Player 1's objective function at $(0,0)$ be positive definite. This comes from the interdependencies introduced by the NEP \eqref{perturbation_ex2} among the players. Specifically, at the point $z(\eps) = - \eps$, Player 1 has multiple optimal strategies, only requiring that $x(\eps)+y(\eps) =0$. This makes the SSOSC fail to hold at $(0,0)$. Nevertheless, Player 2 also needs to find an optimal strategy, which requires $x(\eps) = z(\eps) = -\eps$, thereby ensuring that Player 1 has a unique optimal strategy.

We now provide exact characterizations for the strong regularity of $S_{\rm KKT}$.
 For $k=1,...,N$, we associate with the fixed point $(\bp,\bx,\blam)\in \gph\, \S$ the index sets $I^k_1,I^k_2,I^k_3$ in $\{1,2,...,m_k\}$ defined by
\begin{equation}\label{def-index-NEP}
\begin{array}{ll}
& I^k_1 : = \left\{i\in [s_k+1,m_k] \, | \, \bar{\lambda}^k_i > 0  = g^k_i (\bar{x}^k; \bar{w}^k)-\bar{u}^k_i  \right\}\bigcup \left\{ 1,...,s_k\right\},\\
& I^k_2 : = \left\{i\in [s_k+1,m_k] \, | \, \bar{\lambda}^k_i = 0  = g^k_i (\bar{x}^k; \bar{w}^k)-\bar{u}^k_i  \right\},\\
& I^k_3 : = \left\{i\in [s_k+1,m_k] \, | \, \bar{\lambda}^k_i = 0  > g^k_i (\bar{x}^k; \bar{w}^k)-\bar{u}^k_i  \right\}.
\end{array}
\end{equation}
Since an NLP problem can be considered a special case of an NEP with a single player, we use the same notations $I_1,I_2,I_3$ in \eqref{def-index} to represent the consolidations of $I_1^k,I_2^k,I_3^k$, i.e., $I_1: = \prod_{k=1}^N I^k_1, I_2: = \prod_{k=1}^N I^k_2$, and $I_3: = \prod_{k=1}^N I^k_3$.
Define the index partition set 
$$
    \mathcal{I}: = \left\{ (J_1,J_2,J_3) 
    \, \bigg{|}  \,
            \begin{array}{ll}    
            & J_1 = \prod_{k=1}^N J^k_1, J_2 = \prod_{k=1}^N J^k_2, J_3 = \prod_{k=1}^N J^k_3,\ {\rm where}\ {\rm for}\ k=1,...,N,\\ 
            & J^k_1 \bigcup J^k_2 \bigcup J^k_3= \{1,...,m_k\} \ {\rm with}\
            I^k_1 \subset J^k_1 \subset I^k_1 \bigcup I^k_2, I^k_3 \subset J^k_3 \subset I^k_2\bigcup I^k_3\\
            \end{array}
    \right\}.
$$
Then for any partition $(J_1,J_2,J_3)\in \mathcal{I}$, let 
$$
            K(J_1,J_2) = \left\{
                y=(y^1,...,y^N)\in \mathbb{R}^{n} \, \bigg{|}  \,
            \begin{array}{ll}    
            & \nabla_{x^k} g^k_i (\bar{x}^k;\bar{w}^k)^{\mathrm{T}} y^k =0 \quad {\rm for}\ {\rm all}\ i\in J^k_1,\\ &\nabla_{x^k} g^k_i (\bar{x}^k; \bar{w}^k)^{\mathrm{T}} y^k \le 0 \quad {\rm for}\ {\rm all}\ i\in J^k_2
            \end{array}\right\}.
        $$
In this paper, to avoid complexity in notations, we denote $\nabla^2_{x^k x^i}L^k(\bar{x}^k,\bar{x}^{-k};\bar{w}^k)$ by $\nabla^2_{x^k x^i}L^k$, and $\mathcal{J}_{x^k} G^k(\bar{x}^k;\bar{w}^k)$ by  $\mathcal{J}_{x^k} G^k$ for $i=1,...,N$, $k=1,...,N$. Let 
$$
\begin{array}{ll}
    \textbf{L} (x,\lambda;w) : = 
\end{array}
\left(\begin{array}{cc} 

     &\nabla_{x^1} L^1  (x^1,x^{-1},\lambda^1;w^1) \\
     & \vdots\\
    & \nabla_{x^N} L^N  (x^N,x^{-N},\lambda^N;w^N) \\
    & -G^1  (x^1; w^1)\\
    & \vdots \\
    & -G^N  (x^N; w^N)

\end{array}\right).
$$
Then we obtain the following theorem.
\begin{theorem}\label{thm-strong-eq}
    Let $ (\Bar{x}^1,\Bar{x}^2,...,\Bar{x}^N,\Bar{\lambda}^1,\Bar{\lambda}^2,...,\Bar{\lambda}^N)\in S_{\rm KKT} (\bar{p})$. Then $S_{\rm KKT} (p)$ has a Lipschitz continuous single-valued localization around $\bar{p}$ for $(\bar{x},\bar{\lambda})$ if and only if
    \begin{enumerate}[i)]
        \item The LICQ holds for each player $k$ at $(\bar{p}^k,\bar{x}^k)$, $k=1,...,N$;
        \item For any $K (J_1, J_2)$ associated with the partition $(J_1,J_2,J_3)\in \mathcal{I}$, if $y\in K (J_1, J_2)$, then
        $$
        \begin{pmatrix}
            \nabla^2_{x^1x^1}L^1 & \cdots & (\nabla^2_{x^N x^1}L^N)^{\mathrm{T}}\\
            \vdots & \cdots & \vdots\\
            (\nabla^2_{x^1x^N}L^1)^{\mathrm{T}} & \cdots & \nabla^2_{x^N x^N}L^N
        \end{pmatrix}
        \begin{pmatrix}
            y^1\\
            \vdots \\
            y^N
        \end{pmatrix}
        \in K (J_1, J_2)^\circ \Longrightarrow y = 0.
        $$
    \end{enumerate}
\end{theorem}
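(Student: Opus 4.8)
The plan is to reduce the strong regularity of $S_{\rm KKT}$ to that of $L_{\rm KKT}$ via Lemma~\ref{lemma-Aubin} and Definition~\ref{def-strong}, and then to exploit the inverse-mapping structure of $L_{\rm KKT}$. Writing $\mathbf{L}(x,\lambda;w)$ for the vector-valued map collecting the $\nabla_{x^k}L^k$ and the $-G^k$, the generalized equation \eqref{NEP-KKT} at $w=\bar w$ reads $(v,-u)\in \mathbf{L}(x,\lambda;\bar w)+N_{\mathcal K}(x,\lambda)$, so $\T=L_{\rm KKT}(\cdot,\bar w)$ is the inverse of the set-valued map $\Psi:=\mathbf{L}(\cdot;\bar w)+N_{\mathcal K}$, with $(\bar x,\bar\lambda)\in\Psi^{-1}(\bar u,\bar v)$ up to the obvious sign bookkeeping. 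By Robinson's implicit function theorem in the form of \citet[Theorem 3F.9]{dontchev2009implicit} together with the local monotonicity packaged in Lemma~\ref{lemma-Aubin}, $S_{\rm KKT}$ has a Lipschitz continuous single-valued localization iff $\Psi^{-1}$ has the Aubin property, and by Lemma~\ref{lem-gra} this is equivalent to the coderivative condition
$$
0\in D^*\Psi\bigl((\bar x,\bar\lambda)\,\big|\,-\mathbf{L}(\bar x,\bar\lambda;\bar w)\bigr)(\nu)\ \Longrightarrow\ \nu=0 .
$$
Lemma~\ref{lem-f+g gra} then splits this into a smooth part, governed by the Jacobian $\mathcal J_{(x,\lambda)}\mathbf{L}(\bar x,\bar\lambda;\bar w)^{\mathrm T}$, and a normal-cone part $D^*N_{\mathcal K}$.

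Next I would compute $D^*N_{\mathcal K}$ explicitly. Since $\mathcal K=\{0\}^n\times\bigl(\prod_k \mathbb R^{s_k}\times\mathbb R_+^{m_k-s_k}\bigr)$ is a product, its normal-cone graph factors, and Proposition~\ref{prop-graOfN} (applied blockwise with $(a,b)=(G^k(\bar x^k;\bar w^k)-\bar u^k,\bar\lambda^k)$, whose sign pattern is exactly recorded by the index sets $I_1^k,I_2^k,I_3^k$ of \eqref{def-index-NEP}) gives the coderivative values. Writing $\nu=(\nu_x,\nu_\lambda)$, the coderivative inclusion becomes the statement that there exists $(\mu_x,\mu_\lambda)$ with $(\mu_x,\mu_\lambda)+\mathcal J\mathbf{L}^{\mathrm T}\nu=0$ and $(\mu_x,\mu_\lambda,-\nu_x,-\nu_\lambda)$ obeying the blockwise case distinctions of Proposition~\ref{prop-graOfN}: for $i\in I_1^k$ the $x$-component of $\nu_\lambda$ is free and $\mu$-component is $0$; for $i\in I_3^k$ the $\nu_\lambda$-component is $0$; and for the degenerate indices $i\in I_2^k$ exactly one of three alternatives holds. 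Walking through these alternatives over all $k$ is what produces the set $\mathcal I$ of admissible partitions $(J_1,J_2,J_3)$: a choice of alternative for each degenerate index is precisely a choice of whether that index migrates into $J_1$ or into $J_3$, and the cone $K(J_1,J_2)$ together with its polar $K(J_1,J_2)^\circ$ is exactly the pair of cones in which the relevant components of $\nu_x$ (the primal direction $y$) and $\mathcal J\mathbf{L}^{\mathrm T}\nu$ must lie. From the structure of $\mathbf{L}$, the $\lambda$-blocks of $\mathcal J\mathbf{L}^{\mathrm T}$ are the $\mathcal J_{x^k}G^k$, so after eliminating $\nu_\lambda$ the surviving condition on $y$ is precisely: $\nabla^2_{xx}\mathbf L\,y$ lies in $K(J_1,J_2)^\circ$, and one must rule out nonzero such $y$ — that is condition (ii). The LICQ in condition (i) arises as the clause forcing the multiplier part of $\nu$ to vanish once $\nu_x=0$: injectivity of the relevant $\mathcal J_{x^k}G^k$ restricted to the active rows is exactly what makes $\nu_\lambda=0$, and conversely its failure yields a nonzero $\nu$ in the kernel even with $y=0$.

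The two directions are then organized as follows. For sufficiency, assuming (i) and (ii), take any $\nu$ in the coderivative kernel; the LICQ handles the case $\nu_x=0$ (pushing down to $\nu_\lambda=0$), while if $\nu_x=y\neq0$ one selects the partition $(J_1,J_2,J_3)\in\mathcal I$ dictated by the active alternatives at the degenerate indices for this particular $\nu$, observes $y\in K(J_1,J_2)$ and $\nabla^2_{xx}\mathbf L\,y\in K(J_1,J_2)^\circ$, and invokes (ii) to get $y=0$, a contradiction. For necessity, suppose (i) fails for some player: the linear dependence produces a nonzero $\nu_\lambda$ with $\nu_x=0$ solving the coderivative inclusion, breaking strong regularity. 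If (i) holds but (ii) fails, there is a partition in $\mathcal I$ and a nonzero $y\in K(J_1,J_2)$ with $\nabla^2_{xx}\mathbf L\,y\in K(J_1,J_2)^\circ$; reading the case distinctions of Proposition~\ref{prop-graOfN} backwards lets us manufacture a compatible $\nu_\lambda$ (using LICQ to solve the relevant linear systems) so that $(y,\nu_\lambda)$ is a nonzero element of the coderivative kernel, again contradicting strong regularity. The main obstacle I anticipate is bookkeeping in the necessity direction: one must verify that every partition in $\mathcal I$ is actually \emph{realizable} as a valid choice in Proposition~\ref{prop-graOfN}'s trichotomy (so that no spurious constraints are imposed and none are missed), and that the LICQ indeed suffices to back out the multiplier direction $\nu_\lambda$ from a prescribed $\nabla^2_{xx}\mathbf L\,y\in K(J_1,J_2)^\circ$ — this is where the decoupling across players (each $G^k$ depends only on $x^k$) must be used carefully, since the Hessian block $\nabla^2_{xx}\mathbf L$ couples the players while the constraint Jacobians do not.
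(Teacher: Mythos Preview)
Your proposal is correct and follows essentially the same route as the paper's proof: reduce strong regularity to the coderivative criterion via Lemmas~\ref{lemma-Aubin}, \ref{lem-gra}, \ref{lem-f+g gra}, then unpack $D^*N_{\mathcal K}$ blockwise with Proposition~\ref{prop-graOfN} to generate the partitions in $\mathcal I$ and the cones $K(J_1,J_2)$; the paper's write-up is more compressed and leaves the extraction of LICQ (the $y=0$, $\nu_\lambda\neq0$ case) implicit, whereas you spell out both directions. Two small bookkeeping corrections to make before executing: in Proposition~\ref{prop-graOfN} the pair should be $(a,b)=(\bar\lambda^k,\,G^k(\bar x^k;\bar w^k)-\bar u^k)$ rather than the reverse, and the trichotomy for each degenerate index $i\in I_2^k$ has \emph{three} branches ($\mu_i\in\{0\}$, $\mu_i\in\mathbb R_-$, $\mu_i\in\mathbb R$) corresponding respectively to $i\in J_1^k$, $i\in J_2^k$, $i\in J_3^k$ --- so the index may also remain in $J_2^k$, not merely migrate into $J_1^k$ or $J_3^k$.
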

\begin{proof}{Proof.}
    It follows from the computations that 
    $$
    \mathcal{J}_{x,\lambda} \textbf{L}(\bx,\bar{\lambda};\bw)^{\mathrm{T}} = 
    \begin{pmatrix}
        \nabla^2_{x^1x^1} L^1 & \cdots & (\nabla^2_{x^N x^1} L^N)^{\mathrm{T}} & -(\mathcal{J}_{x^1} G^1)^{\mathrm{T}} & \cdots & 0\\
        \vdots & \vdots & \vdots & \vdots & \ddots & \vdots\\
        (\nabla^2_{x^1x^N} L^1)^{\mathrm{T}} & \cdots & \nabla^2_{x^N x^N} L^N & 0  & \cdots & -(\mathcal{J}_{x^N} G^N)^{\mathrm{T}}\\
        \mathcal{J}_{x^1} G^1 & \cdots & 0 & 0  & \cdots & 0\\
        \vdots & \ddots & \vdots & \vdots & \ddots & \vdots\\
        0 & \cdots & \mathcal{J}_{x^N}G^N & 0 & \vdots & 0
    \end{pmatrix}.
    $$
    Recall that
$$
\mathcal{K} = \underbrace{\left\{0\right\} \times \cdots \times \left\{0\right\}}_{n} \times \, \mathbb{R}^{s_1} \times \mathbb{R}_+^{m_1-s_1}\times \cdots \times \mathbb{R}^{s_N}\times \mathbb{R}_+^{m_N-s_N}.
$$ 
 By Lemma \ref{lemma-Aubin}, \ref{lem-gra} and \ref{lem-f+g gra}, $S_{\rm KKT}$ has a Lipschitz continuous single-valued localization around $\bar{p}$ for $(\bar{x},\bar{\lambda})$ if and only if for any $q=(q_1,...,q_{2N})\in \mathbb{R}^{n_1}\times \cdots \times \mathbb{R}^{n_N}\times \mathbb{R}^{m_1}\times \cdots \times \mathbb{R}^{m_N} = \mathbb{R}^{n}\times\mathbb{R}^m$,
    \begin{equation}\label{-L=>q=0}
        -\mathcal{J}_{x,\lambda} \textbf{L}(\bx,\bar{\lambda};\bw)^{\mathrm{T}} q \in D^*N_{{\mathcal{K}}}\left((\bar{x},\bar{\lambda})\, | \, -\textbf{L}(\bar{x},\bar{\lambda};\bar{w})\right)(q) \Longrightarrow q=0.
    \end{equation}
    Consequently, (\ref{-L=>q=0}) holds if and only if 
    $$
        \left(-\mathcal{J}_{x,\lambda} \textbf{L}(\bx,\bar{\lambda};\bw)^{\mathrm{T}} q, -q\right)\in N_{{\rm gph}\,N_{\mathcal{K}}}\left((\bar{x},\bar{\lambda}), -\textbf{L}(\bar{x},\bar{\lambda};\bar{w})\right) \Longrightarrow q=0.
    $$
    From \citet[Proposition 6.41]{rockafellar2009variational}, (\ref{-L=>q=0}) is equivalent to
    \begin{equation}\label{Gen=>q=0}
    \left\{\begin{array}{ll}
    \begin{pmatrix}
        \nabla^2_{x^1x^1} L^1 & \cdots & (\nabla^2_{x^N x^1} L^N)^{\mathrm{T}} & -(\mathcal{J}_{x^1} G^1)^{\mathrm{T}} & \cdots & 0\\
        \vdots & \vdots & \vdots & \vdots & \ddots & \vdots\\
        (\nabla^2_{x^1x^N} L^1)^{\mathrm{T}} & \cdots & \nabla^2_{x^N x^N} L^N & 0  & \cdots & -(\mathcal{J}_{x^N} G^N)^{\mathrm{T}}\\
    \end{pmatrix}
    \begin{pmatrix}
        q_1\\
        \vdots\\
        q_N\\
        q_{N+1}\\
        \vdots\\
        q_{2N}
    \end{pmatrix}= 
        \begin{pmatrix}
            0\\
            \vdots\\
            0
        \end{pmatrix},\\
    (-\mathcal{J}_{x^1} G^1q_1, -q_{N+1})\in N_{{\rm gph}\,N_{\mathbb{R}^{s_1}\times \mathbb{R}^{m_1-s_1}_+}}\left(\bar{\lambda}^1,G^1(\bar{x}^1;\bar{w}^1)\right), \\
    \quad\vdots\\
    (-\mathcal{J}_{x^N} G^Nq_N, -q_{2N})\in N_{{\rm gph}\, N_{\mathbb{R}^{s_N}\times \mathbb{R}^{m_N-s_N}_+}}\left(\bar{\lambda}^N,G^N(\bar{x}^N;\bar{w}^N)\right)
\end{array}\right.
\Longrightarrow q = 0.
\end{equation}
    Let $y=(y^1,...,y^N)=-(q_1,...,q_N)\in\mathbb{R}^n$, $y^\prime = (q_{N+1},...,q_{2N}) \in \mathbb{R}^m$. Then from Proposition \ref{prop-graOfN}, (\ref{Gen=>q=0}) actually requires that for any $K (J_1, J_2)$  associated with the partition $(J_1,J_2,J_3)\in \mathcal{I}$, if $y\in K (J_1, J_2)$, the generalized equation system
    $$
    \left\{\begin{array}{ll}
        y^\prime \in 
       - \left[  \begin{pmatrix}
            \mathcal{J}_{x^1} G^1 & \cdots & 0\\
             \vdots & \ddots & \vdots\\
            0  & \cdots & \mathcal{J}_{x^N} G^N
        \end{pmatrix} K (J_1, J_2)\right]^\circ,\\
    \begin{pmatrix}
        \hes_{x^1x^1} L^1 & \cdots & (\hes_{x^N x^1} L^N)^{\mathrm{T}} \\
        \vdots & \vdots & \vdots \\
        (\hes_{x^1x^N} L^1)^{\mathrm{T}} & \cdots & \hes_{x^N x^N} L^N \\
    \end{pmatrix}
    \begin{pmatrix}
        y^1\\
        \vdots\\
        y^N
    \end{pmatrix}
    + 
    \begin{pmatrix}
        (\mathcal{J}_{x^1} G^1)^{\mathrm{T}} & \cdots & 0\\
         \vdots & \ddots & \vdots\\
        0  & \cdots & (\mathcal{J}_{x^N} G^N)^{\mathrm{T}}
    \end{pmatrix}
    \begin{pmatrix}
    y^{\prime1}\\
        \vdots\\
        y^{\prime N}
    \end{pmatrix}= 
    \begin{pmatrix}
        0\\
        \vdots\\
        0
    \end{pmatrix}
\end{array}\right.
    $$
    has only a zero solution. Since
    $$
    y^\prime \in 
    - \left[  \begin{pmatrix}
         \mathcal{J}_{x^1} G^1 & \cdots & 0\\
          \vdots & \ddots & \vdots\\
         0  & \cdots & \mathcal{J}_{x^N} G^N
     \end{pmatrix} K (J_1, J_2)\right]^\circ 
     \Longleftrightarrow 
     \begin{pmatrix}
        (\mathcal{J}_{x^1} G^1)^{\mathrm{T}} & \cdots & 0\\
         \vdots & \ddots & \vdots\\
        0  & \cdots & (\mathcal{J}_{x^N} G^N)^{\mathrm{T}}
    \end{pmatrix}
    \begin{pmatrix}
        y^{\prime1}\\
        \vdots\\
        y^{\prime N}
    \end{pmatrix} \in -K (J_1, J_2)^\circ,
    $$
   we obtain the conclusion. \Halmos
\end{proof}

\begin{remark}\label{rmk-strong-eq} The following results hold:
    \begin{enumerate}[i)]
        \item When $J_1 = I_1$ and $J_2 = I_2$, $K (J_1, J_2)$ is actually the Cartesian product of the critical cone for each player $k$ at $(\bar{p}^k,\bar{x}^k,\bar{\lambda}^k)$;
        \item Because of the multiple choices of $K(J_1, J_2)$ in Theorem \ref{thm-strong-eq}, utilizing the theorem to access the stability property of a practical problem seems challenging;
        \item When $N=1$, i.e., the NEP reduces to a nonlinear programming problem, Theorem \ref{thm-strong-eq} is the ``critical face condition'' obtained by Dontchev and Rockafellar for NLP \citep{dontchev1996characterizations};
        \item The KKT solution mapping $S_{\rm KKT}$ from Example \ref{NEP-exm-2} has a Lipschitz continuous single-valued localization around $\bar{\eps} = 0$ for $(\bar{x},\bar{y},\bar{z})=(0,0,0)$. In fact, for Example \ref{NEP-exm-2}, the LICQ holds for each player at $(\bar{\eps},\bx,\bar{y})$ and $(\bar{\eps},\bar{z})$, as these are unconstrained optimization problems. Moreover, $K(J_1, J_2) = \mathbb{R}^3$ and 
        $$
        \begin{pmatrix}
            \hes_{x x}L^1 &  \hes_{x y}L^1 & (\hes_{z x}L^2)^{\mathrm{T}}\\
            \hes_{y x}L^1 &  \hes_{y y}L^1 & (\hes_{z y}L^2)^{\mathrm{T}}\\
            (\hes_{x z}L^1)^{\mathrm{T}} &  (\hes_{y z}L^1)^{\mathrm{T}} & \hes_{z z}L^2\\
        \end{pmatrix}=
        \begin{pmatrix}
            1 & 1 & -1\\
            1 & 1 & 0\\
            -1 & -2 & 1\\
        \end{pmatrix},
        $$
        which is of full rank.
    \end{enumerate}
\end{remark}

    Given the challenging applicability of the strong regularity characterization for $S_{\rm KKT}$ in Theorem \ref{thm-strong-eq}, we provide a verifiable sufficient characterization below, which also holds clear practical significance. Let $M^k : = \left\{y^k\in \mathbb{R}^{n_k}\, | \, \nabla_{x^k} g^k_i (\bar{x}^k;\bar{w}^k)^{\mathrm{T}} y^k =0, i\in I^k_1\right\}$, and $M: = M^1\times \cdots \times M^N$. Define $B^k\in \mathbb{R}^{n_k \times (n_k-|I^k_1|)}$ as the matrix whose columns consist of the basis of the subspace $M^k$ when the LICQ holds at $(\bar{p}^k,\bar{x}^k)$.
    
    \begin{theorem}\label{thm-strong-suf}
        Let $(\Bar{x}^1,\Bar{x}^2,...,\Bar{x}^N, \Bar{\lambda}^1,\Bar{\lambda}^2,...,\Bar{\lambda}^N)\in S_{\rm KKT}(\bar{p})$. Suppose that the following requirements are fulfilled:
        \begin{enumerate}[i)]
            \item The LICQ holds for each player $k$ at $(\bar{p}^k,\bar{x}^k)$;
            \item The SSOSC holds for each player $k$ at $(\bar{p}^k,\bar{x}^k,\bar{\lambda}^k)$;
            \item There exists a set of parameters 
            $$
                \left\{ \alpha_{i,j}>0 \, \bigg{|} \,  1\le i,j\le N,\, j\neq i,\, {\rm and}\  \sum\limits_{\substack{ j\neq i}} \alpha_{i,j} = 1\ {\rm for}\ i=1,...,N \right\}
            $$
            such that for any $0\neq y^k\in M^k$ and $A_{ki}: = \nabla^2_{x^kx^i}L^k+ (\nabla^2_{x^ix^k}L^i)^{\mathrm{T}}$, 
            $$
                (y^k)^{\mathrm{T}} 
                \left(\nabla^2_{x^kx^k}L^k -\frac{1}{4 \alpha_{i,k}\alpha_{k,i}}\left(A_{ik}^{\mathrm{T}} B^i \left(({B^i})^{\mathrm{T}} \nabla^2_{x^ix^i}L^i B^i\right)^{-1}({B^i})^{\mathrm{T}}A_{ik}\right)\right)
                y^k>0
            $$
            for each player $i$ and player $k$, where $i\neq k$.
        \end{enumerate}
        Then $S_{\rm KKT}(p)$ has a Lipschitz continuous single-valued localization around $\bar{p}$ for $(\bar{x},\bar{\lambda})$. Moreover, for any $\left(p,x(p),\lambda(p)\right)\in {\rm gph}\, S_{\rm KKT}$ around $(\bar{p},\bar{x},\bar{\lambda})$, $x(p)$ is an LNE of Problem (\ref{NEP_perturbations}).
    \end{theorem}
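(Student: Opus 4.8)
The plan is to derive the conclusion from the exact characterization in Theorem~\ref{thm-strong-eq}. Hypothesis (i) here is exactly condition~(i) there, so the entire burden is to verify condition~(ii) of that theorem: for every partition $(J_1,J_2,J_3)\in\mathcal{I}$, every $y\in K(J_1,J_2)$ for which $\mathbf{H}y\in K(J_1,J_2)^\circ$ must vanish, where $\mathbf{H}$ denotes the block matrix appearing in Theorem~\ref{thm-strong-eq}(ii). Two reductions simplify the task. Since $I_1^k\subseteq J_1^k$ for each $k$, one has $K(J_1,J_2)\subseteq M^1\times\cdots\times M^N=M$, so it suffices to consider $y\in M$; and since $y\in K(J_1,J_2)$, inserting $z=y$ into the inequality defining the polar cone $K(J_1,J_2)^\circ$ shows that $\mathbf{H}y\in K(J_1,J_2)^\circ$ forces $y^{\mathrm{T}}\mathbf{H}y\le 0$. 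Hence the whole statement reduces to showing that the quadratic form $y\mapsto y^{\mathrm{T}}\mathbf{H}y$ is positive definite on $M$.

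I would next put this form into a pairwise-separable shape. A direct computation using the blocks $(\nabla^2_{x^ix^k}L^i)^{\mathrm{T}}$ gives, for $y=(y^1,\dots,y^N)\in M$,
\[
y^{\mathrm{T}}\mathbf{H}y=\sum_{k=1}^{N}(y^k)^{\mathrm{T}}\nabla^2_{x^kx^k}L^k\,y^k+\sum_{k<i}(y^k)^{\mathrm{T}}A_{ki}\,y^i,\qquad A_{ki}=A_{ik}^{\mathrm{T}}.
\]
Splitting each diagonal term with the weights of hypothesis~(iii), $\nabla^2_{x^kx^k}L^k=\sum_{i\neq k}\alpha_{k,i}\nabla^2_{x^kx^k}L^k$, and regrouping by unordered pairs, one obtains $y^{\mathrm{T}}\mathbf{H}y=\sum_{k<i}\Theta_{ki}(y^k,y^i)$, where
\[
\Theta_{ki}(y^k,y^i)=\alpha_{k,i}(y^k)^{\mathrm{T}}\nabla^2_{x^kx^k}L^k\,y^k+\alpha_{i,k}(y^i)^{\mathrm{T}}\nabla^2_{x^ix^i}L^i\,y^i+(y^k)^{\mathrm{T}}A_{ki}\,y^i,
\]
the normalization $\sum_{j\neq k}\alpha_{k,j}=1$ being precisely what makes the diagonal coefficients reassemble correctly.

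The core step is to show each $\Theta_{ki}$ is a positive definite form on $M^k\times M^i$. Writing $y^k=B^kz^k$, $y^i=B^iz^i$ (valid since $y^k\in M^k$, $y^i\in M^i$ and $B^k,B^i$ have full column rank), $\Theta_{ki}$ becomes the quadratic form of
\[
\begin{pmatrix}\alpha_{k,i}Q_k & \tfrac12 R_{ki}\\[3pt] \tfrac12 R_{ki}^{\mathrm{T}} & \alpha_{i,k}Q_i\end{pmatrix},\qquad Q_k:=(B^k)^{\mathrm{T}}\nabla^2_{x^kx^k}L^k B^k,\quad R_{ki}:=(B^k)^{\mathrm{T}}A_{ki}B^i.
\]
By the SSOSC (hypothesis~(ii)), $Q_k$ and $Q_i$ are positive definite. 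Using $R_{ki}^{\mathrm{T}}=(B^i)^{\mathrm{T}}A_{ik}B^k$ together with $y^k=B^kz^k$, one checks $(z^k)^{\mathrm{T}}R_{ki}Q_i^{-1}R_{ki}^{\mathrm{T}}z^k=(y^k)^{\mathrm{T}}A_{ik}^{\mathrm{T}}B^i\bigl((B^i)^{\mathrm{T}}\nabla^2_{x^ix^i}L^i B^i\bigr)^{-1}(B^i)^{\mathrm{T}}A_{ik}y^k$, so hypothesis~(iii) states precisely that the Schur complement $Q_k-\tfrac{1}{4\alpha_{i,k}\alpha_{k,i}}R_{ki}Q_i^{-1}R_{ki}^{\mathrm{T}}$ is positive definite; combined with positive definiteness of $\alpha_{i,k}Q_i$ this makes the $2\times2$ block matrix positive definite. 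Hence $\Theta_{ki}(y^k,y^i)=0$ forces $z^k=z^i=0$, i.e.\ $y^k=y^i=0$. Therefore $y^{\mathrm{T}}\mathbf{H}y=\sum_{k<i}\Theta_{ki}\ge0$, and $y^{\mathrm{T}}\mathbf{H}y\le0$ forces every $\Theta_{ki}=0$, hence $y^k=0$ for all $k$ when $N\ge2$ (each index lies in some pair); the case $N=1$ is the classical NLP statement, recovered from Theorem~\ref{thm-strong-eq} because then $\mathcal{I}$ yields only critical-cone-type faces on which the SSOSC gives the implication. This verifies Theorem~\ref{thm-strong-eq}(ii), and with hypothesis~(i) yields the Lipschitz continuous single-valued localization of $S_{\rm KKT}$.

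For the last assertion I would show that LICQ and the SSOSC for each player persist at the perturbed KKT triple. Let $(x(p),\lambda(p))$ be the localization for $p$ near $\bar p$. LICQ for player~$k$ at $(p^k,x^k(p))$ holds by openness of linear independence of the active gradients, and the subspace entering the SSOSC for player~$k$ at $(x^k(p),\lambda^k(p))$ is contained in $\{y^k:\nabla_{x^k}g^k_i(x^k(p);w^k)^{\mathrm{T}}y^k=0,\ i\in I^k_1\}$ (indices in $I^k_1$ stay strongly active by continuity of $\lambda^k(p)$, those in $I^k_3$ stay inactive), a subspace close to $M^k$, so positive definiteness of $\nabla^2_{x^kx^k}L^k$ there is preserved. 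Then the second-order condition \eqref{SOSC} holds for $\textbf{P}^k(x^{-k}(p),p^k)$, so by Theorem~\ref{thm-SOSC} the point $x^k(p)$ is a strict local minimizer of $\textbf{P}^k(x^{-k}(p),p^k)$; consequently $x(p)$ is an LNE of \eqref{NEP_perturbations}. The step I expect to be the main obstacle is lining up the transposes $A_{ik}=A_{ki}^{\mathrm{T}}$ and the reduced bases $B^k,B^i$ so that the abstract positive-definiteness of the $2\times2$ block matches the exact Schur-complement expression written in hypothesis~(iii), together with the bookkeeping that the weighted diagonal split genuinely reassembles into $y^{\mathrm{T}}\mathbf{H}y$.
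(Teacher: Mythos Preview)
Your proposal is correct and follows essentially the same route as the paper: reduce to Theorem~\ref{thm-strong-eq}, show that the quadratic form $y^{\mathrm{T}}\mathbf{H}y$ is positive definite on $M$ by splitting the diagonal blocks with the weights $\alpha_{k,i}$ into a sum of pairwise $2\times2$ block forms, and then apply the Schur complement criterion to each pair, which matches hypothesis~(iii) exactly. The only cosmetic difference is that the paper writes the decomposition as a double sum over ordered pairs (hence extra factors of $1/2$ and $1/4$) while you sum over unordered pairs; your handling of the $N=1$ boundary case and of the LNE persistence via the inclusions $I_1^k\subset\tilde{I}_1^k$, $\tilde{I}_2^k\subset I_2^k$ plus continuity of the Hessian is also the paper's argument.
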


    \begin{proof}{Proof.}
        To begin, we demonstrate that for any $0\neq y\in M$,
    $$
    \begin{pmatrix} 
        (y^1)^{\mathrm{T}}& \cdots & (y^N)^{\mathrm{T}}
    \end{pmatrix}
    \begin{pmatrix}
        \nabla^2_{x^1x^1}L^1 & \cdots & \nabla^2_{x^1 x^N}L^1\\
        \vdots & \cdots & \vdots\\
        \nabla^2_{x^Nx^1}L^N & \cdots & \nabla^2_{x^N x^N}L^N
    \end{pmatrix}
    \begin{pmatrix}
        y^1\\
        \vdots\\
        y^N
    \end{pmatrix}>0,
    $$
    which is equivalent to the following symmetric matrix 
    $$
    \begin{pmatrix}
        ({B^1})^{\mathrm{T}}\nabla^2_{x^1x^1}L^1B^1 & \cdots & \frac{1}{2}({B^1})^{\mathrm{T}}A_{1N}B^N\\
        \vdots & \cdots & \vdots\\
        \frac{1}{2}({B^N})^{\mathrm{T}}A_{N1}B^1 & \cdots & ({B^N})^{\mathrm{T}}\nabla^2_{x^Nx^N}L^NB^N
    \end{pmatrix}
    $$
    being positive definite. For any $z\in \left\{(z^1,...,z^N) \, | \, z^k\in \mathbb{R}^{n_k-|I^k_1|},\ k=1,...,N\right\}$, 
    $$
    \begin{array}{ll}
        &\begin{pmatrix}
            ({z^1})^{\mathrm{T}}& \cdots & ({z^N})^{\mathrm{T}}
        \end{pmatrix}
        \begin{pmatrix}
            ({B^1})^{\mathrm{T}}\nabla^2_{x^1x^1}L^1B^1 & \cdots & \frac{1}{2}({B^1})^{\mathrm{T}}A_{1N}B^N\\
            \vdots & \cdots & \vdots\\
            \frac{1}{2}({B^N})^{\mathrm{T}}A_{N1}B^1 & \cdots & ({B^N})^{\mathrm{T}}\nabla^2_{x^Nx^N}L^NB^N
        \end{pmatrix}
        \begin{pmatrix}
            z^1\\
            \vdots\\
            z^N
        \end{pmatrix}\\
        =& 
        \sum\limits_i \sum\limits_{k\neq i} 
        \begin{pmatrix}
            ({z^k})^{\mathrm{T}} & ({z^i})^{\mathrm{T}}
        \end{pmatrix}
        \begin{pmatrix}
            \frac{\alpha_{k,i}}{2} ({B^k})^{\mathrm{T}}\nabla^2_{x^kx^k}L^kB^k & \frac{1}{4}({B^k})^{\mathrm{T}}A_{ki}B^i \\
            \frac{1}{4}({B^i})^{\mathrm{T}}A_{ik}B^k& \frac{\alpha_{i,k}}{2}({B^i})^{\mathrm{T}}\nabla^2_{x^ix^i}L^iB^i
        \end{pmatrix}
        \begin{pmatrix}
            z^k\\
            z^i
        \end{pmatrix}.
        \\
    \end{array}
    $$
    Following from the Schur complement, the symmetric matrix 
    $$
    \begin{pmatrix}
        \frac{\alpha_{k,i}}{2} ({B^k})^{\mathrm{T}}\nabla^2_{x^kx^k}L^kB^k & \frac{1}{4}({B^k})^{\mathrm{T}}A_{ki}B^i \\
        \frac{1}{4}({B^i})^{\mathrm{T}}A_{ik}B^k& \frac{\alpha_{i,k}}{2}({B^i})^{\mathrm{T}}\nabla^2_{x^ix^i}L^iB^i
    \end{pmatrix}
    $$
    is positive definite if and only if both $ \frac{\alpha_{k,i}}{2} ({B^k})^{\mathrm{T}}\nabla^2_{x^kx^k}L^kB^k$ and 
    $$
    \frac{\alpha_{i,k}}{2}({B^i})^{\mathrm{T}} \hes_{x^ix^i}L^i B^i - \frac{1}{8\alpha_{k,i}}({B^i})^{\mathrm{T}}\left(A_{ki}^{\mathrm{T}} B^k \left(({B^k})^{\mathrm{T}} \nabla^2_{x^kx^k}L^k B^k\right)^{-1}({B^k})^{\mathrm{T}}A_{ki}\right)B^i
    $$
    are positive definite. These conditions are equivalent to the SSOSC holding at $(\bar{p}^k,\bar{x}^k,\bar{\lambda}^k)$ and for any $0 \neq y^k\in M^k$, 
    $$
    (y^k)^{\mathrm{T}} 
    \left(\nabla^2_{x^kx^k}L^k -\frac{1}{4 \alpha_{i,k}\alpha_{k,i}}\left(A_{ik}^{\mathrm{T}} B^i \left(({B^i})^{\mathrm{T}} \nabla^2_{x^ix^i}L^i B^i\right)^{-1}({B^i})^{\mathrm{T}}A_{ik}\right)\right)
    y^k>0
    $$
    for each player $k$ and player $i$, where $i\neq k$.
    Consequently, for any $0\neq y\in M$,
    $$
    \begin{pmatrix} 
        (y^1)^{\mathrm{T}}& \cdots & (y^N)^{\mathrm{T}}
    \end{pmatrix}
    \begin{pmatrix}
        \nabla^2_{x^1x^1}L^1 & \cdots & \nabla^2_{x^1 x^N}L^1\\
        \vdots & \cdots & \vdots\\
        \nabla^2_{x^Nx^1}L^N & \cdots & \nabla^2_{x^N x^N}L^N
    \end{pmatrix}
    \begin{pmatrix}
        y^1\\
        \vdots\\
        y^N
    \end{pmatrix}>0.
    $$
    For any cone $K(J_1, J_2)$ associated with the partition $(J_1,J_2,J_3)\in \mathcal{I}$, if $y\in K (J_1, J_2) \subset M$ and
    $$
    \begin{pmatrix}
        \nabla^2_{x^1x^1}L^1 & \cdots & (\nabla^2_{x^N x^1}L^N)^{\mathrm{T}}\\
        \vdots & \cdots & \vdots\\
        (\hes_{x^1x^N}L^1)^{\mathrm{T}} & \cdots & \hes_{x^N x^N}L^N
    \end{pmatrix}
    \begin{pmatrix}
        y^1\\
        \vdots\\
        y^N
    \end{pmatrix}\in K(J_1, J_2)^\circ,
    $$
    then 
    $$
    \begin{pmatrix} 
        (y^1)^{\mathrm{T}}& \cdots & (y^N)^{\mathrm{T}}
    \end{pmatrix}
    \begin{pmatrix}
        \nabla^2_{x^1x^1}L^1 & \cdots & \nabla^2_{x^1 x^N}L^1\\
        \vdots & \cdots & \vdots\\
        \nabla^2_{x^Nx^1}L^N & \cdots & \nabla^2_{x^N x^N}L^N
    \end{pmatrix}
    \begin{pmatrix}
        y^1\\
        \vdots\\
        y^N
    \end{pmatrix}\le0,
    $$
    which implies that $y = 0$. From Theorem \ref{thm-strong-eq}, $S_{\rm KKT}(p)$ has a Lipschitz continuous single-valued localization around $\bar{p}$ for $(\bar{x},\bar{\lambda})$. For any $(p,x(p),\lambda(p))\in {\rm gph}\, S_{\rm KKT}$ close enough to $(\bar{p},\bar{x},\bar{\lambda})$, we now prove that $x(p)$ is a local Nash equilibrium. Let the index sets for each player $k$ associated with the point $(p,x(p),\lambda (p))$ be defined as follows:
    $$
    \begin{array}{ll}
    & \tilde{I^k_1} : = \left\{i\in [s_k+1,m_k] \, | \, \lambda^k_i(p) > 0  = g^k_i (x^k(p), w^k)-u^k_i  \right\}\bigcup \left\{ 1,...,s_k\right\},\\
    & \tilde{I^k_2} : = \left\{i\in [s_k+1,m_k] \, | \, \lambda^k_i(p) = 0  = g^k_i (x^k(p), w^k)-u^k_i  \right\},\\
    & \tilde{I^k_3} : = \left\{i\in [s_k+1,m_k] \, | \, \lambda^k_i(p) = 0  > g^k_i (x^k(p), w^k)-u^k_i  \right\}.
    \end{array}     
    $$
    Since the SSOSC holds for each player $k$ at $(\bar{p}^k,\bar{x}^k,\bar{\lambda}^k)$, $I^k_1 \subset \tilde{I^k_1}$ and $\tilde{I^k_2}\subset I^k_2$, the SOSC holds at $(p^k, x^k(p),\lambda^k(p))$, which implies that $x = (x^1(p),...,x^N(p))$ is a local Nash equilibrium (see Theorem \ref{thm-SOSC}). \Halmos

\end{proof}

    \begin{remark}\label{rmk-strong-suf}
        When $N=1$, Theorem \ref{thm-strong-suf} aligns with the well-known result regarding the strong regularity of $S^1_{\rm KKT}$ in NLP, i.e., according to Theorem 4.2 by \citet{dontchev2020characterizations}, the LICQ and the SSOSC imply the strong regularity of $S^1_{\rm KKT}$. 
    \end{remark}

    \begin{remark}\label{rmk-strong-suf-algorithm}
        Theorem \ref{thm-strong-suf} demonstrates that for any $y\in M$, the quadratic form 
        \begin{equation}\label{quadratic form}
        \begin{pmatrix}
            \nabla^2_{x^1x^1}L^1 & \cdots & \nabla^2_{x^1 x^N}L^1\\
            \vdots & \cdots & \vdots\\
            \nabla^2_{x^Nx^1}L^N & \cdots & \nabla^2_{x^N x^N}L^N
        \end{pmatrix}
        \end{equation}
        is positive semi-definite if the SSOSC holds at $(\bp^k,\bx^k,\blam^k)$, and the strict inequality in condition iii) is relaxed to $\ge$ for $k=1,...,N$. This result implies the monotonicity of \eqref{quadratic form} on $M$, which is especially useful for designing algorithms to solve NEPs. For example, Sections 5.2 and 5.3 in \citet{facchinei2010generalized} provide discussions on variational inequality type and Nikaido-Isoda-function type methods for NEPs.
    \end{remark}

   If only $w^k$ remains unchanged for $k=1,...,N$, and $\T$ is strongly regular at $(\bar{u},\bar{v})$ for $(\bar{x},\bar{\lambda})$, is $S_{\rm KKT}$ also strongly regular at $\bar{p}$ for $(\bar{x},\bar{\lambda})$? The following corollary provides a positive answer based on Theorem \ref{thm-strong-eq}. 

    \begin{corollary}\label{coro-strong-til}
        Let $ (\Bar{x}^1,\Bar{x}^2,...,\Bar{x}^N, \Bar{\lambda}^1,\Bar{\lambda}^2,...,\Bar{\lambda}^N)\in S_{\rm KKT} (\bar{p})$. The following are equivalent:
        \begin{enumerate}[i)]
            \item $S_{\rm KKT} (p)$ has a Lipschitz continuous single-valued localization around $\bar{p}$ for $(\bar{x},\bar{\lambda})$;
            \item $\T (u,v)$ has a Lipschitz continuous single-valued localization around $(\bar{u},\bar{v})$ for $(\bar{x},\bar{\lambda})$.
        \end{enumerate}
    \end{corollary}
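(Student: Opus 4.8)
The plan is to apply Theorem~\ref{thm-strong-eq} to both $\S$ at $\bp$ and $\T$ at $(\bu,\bv)$ and to observe that it returns the very same pair of conditions i)--ii) in each case. What makes this work is that conditions i) and ii) of Theorem~\ref{thm-strong-eq} are intrinsic to the reference triple $(\bp,\bx,\blam)$ and never refer to the parameter $w$: the LICQ data $\nabla_{x^k}g^k_i(\bx^k;\bw^k)$, the Hessian blocks $\hes_{x^kx^i}L^k(\bx^k,\bx^{-k},\blam^k;\bw^k)$, and the index sets $I^k_1,I^k_2,I^k_3$ (hence $\mathcal{I}$ and the cones $K(J_1,J_2)$) are merely the first- and second-order derivatives at $\bx$ of the frozen functions $\hat f^k:=f^k(\cdot\,;\bw^k)$ and $\hat g^k_i:=g^k_i(\cdot\,;\bw^k)$, together with $\blam$ and $\bu$.

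First I would dispose of i)$\Rightarrow$ii) without the theorem: since $\T(u,v)=\S(u,v,\bw)$, one has $\gph\T=\{(u,v,x,\lambda) : (u,v,\bw,x,\lambda)\in\gph\S\}$, so a Lipschitz continuous single-valued localization $s$ of $\S$ on $U\times V$ restricts to the Lipschitz continuous single-valued map $(u,v)\mapsto s(u,v,\bw)$, whose graph is $\gph\T\cap\big(\{(u,v):(u,v,\bw)\in U\}\times V\big)$. For the nontrivial direction ii)$\Rightarrow$i) I would view $\T$ as the KKT solution mapping $S_{\rm KKT}$ of the NEP with canonical perturbations built from the player functions $\hat f^k,\hat g^k_i$ with no $w$-parameter; its canonical perturbation is precisely $(u,v)$ and its reference point is $(\bu,\bv,\bx,\blam)$. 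Since, as just noted, the reference data of this NEP coincide with those of the original one, the family $\mathcal{I}$ and all cones $K(J_1,J_2)$ are unchanged, so Theorem~\ref{thm-strong-eq} applied to it reads: $\T$ has a Lipschitz continuous single-valued localization around $(\bu,\bv)$ for $(\bx,\blam)$ if and only if conditions i) and ii) hold. Combined with Theorem~\ref{thm-strong-eq} for $\S$, this yields the claimed equivalence.

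The main obstacle --- such as it is --- is checking that the proof of Theorem~\ref{thm-strong-eq} is legitimately available for $\T$: that proof runs through Lemma~\ref{lemma-Aubin}, i.e.\ through Robinson's implicit function theorem (\citet[Theorem~3F.9]{dontchev2009implicit}) and the local monotonicity of the linearized KKT map (\citet[Theorem~3G.5]{dontchev2009implicit}), and one must confirm these still apply when $w$ is held fixed at $\bw$. This causes no difficulty: the linearized system \eqref{NEP-LKKT} already carries $\bw$, so its solution mapping $\L$ is simultaneously the linearization of $\S$ at $\bp$ and of $\T$ at $(\bu,\bv)$, and the $(u,v)$-perturbations alone supply the ample parametrization Robinson's theorem needs; hence Lemma~\ref{lemma-Aubin} holds verbatim with $\S,\bp$ replaced by $\T,(\bu,\bv)$, and the coderivative computations (Lemmas~\ref{lem-gra} and~\ref{lem-f+g gra}, Proposition~\ref{prop-graOfN}, and \citet[Proposition~6.41]{rockafellar2009variational}) are word-for-word the same. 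I do not anticipate anything beyond this bookkeeping.
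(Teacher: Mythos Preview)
Your proposal is correct and follows essentially the same route as the paper, which simply notes that the result is ``an immediate result from Theorem~\ref{thm-strong-eq} or Lemma~\ref{lemma-Aubin}.'' Your more careful verification that Theorem~\ref{thm-strong-eq} (via Lemma~\ref{lemma-Aubin}) applies verbatim to $\T$ because the linearization $\L$ and all the reference data at $(\bp,\bx,\blam)$ are shared by $\S$ and $\T$ is exactly the content behind that one-line proof.
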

    
    \begin{proof}{Proof.}
        This is an immediate result from Theorem \ref{thm-strong-eq} or Lemma \ref{lemma-Aubin}. \Halmos
    \end{proof}

    \section{The continuously differentiable single-valued localization of $S_{\rm KKT}$}\label{Diff}
    
    As a consequence of Theorem \ref{thm-strong-eq} and Theorem \ref{thm-strong-suf}, we 
    present a characterization of the continuously differentiable 
    single-valued localization of $S_{\rm KKT}$ which is called the 
    Jacobian uniqueness conditions by \citet{fiacco1990nonlinear} in NLP. 
    Without loss of generality, in this section, we 
    suppose that for $k=1,...,N$, the index sets \eqref{def-index-NEP} associated with the point $(\bp,\bx,\blam)\in \gph \,\S$ are  
        $$
\begin{array}{ll}
& I^k_1  = \left\{1,...,s_k \right\} \bigcup \left\{s_k+1,...,|I^k_1|\right\},\\
& I^k_2  = \left\{|I^k_1|+1,...,|I^k_1|+|I^k_2|  \right\},\\
& I^k_3  = \left\{|I^k_1|+|I^k_2|+1,...,  m_k\right\}.
\end{array}
$$
    Let $\Pi_{\mathbb{R}^{|I_1|}}: \mathbb{R}^m \rightarrow \mathbb{R}^{|I_1|}$ be the projection operator such that 
    $$
        \Pi_{\mathbb{R}^{|I_1|}}(\lambda) = (\tlam^1,...,\tlam^N),
    $$ 
    where $\tlam^k = (\lambda^k_1,...,\lambda^k_{|I^k_1|})$ for $k=1,...,N$. Let the projection of $\lambda$ onto $\mathbb{R}^{|I_1|}$ be defined as $\tilde{\lambda} := \Pi_{\mathbb{R}^{|I_1|}} (\lambda)$. Consider the following continuously differentiable mapping 
    \begin{equation}\label{NEP-KKT-Reduce}
        \mathbf{F} (x,\tlam , p): = 
        \begin{pmatrix}
            &\nabla_{x^1} \tilde{L}^1 (x^1,x^{-1},\tilde{\lambda}^1;w^1) - v^1 \\
         & \vdots\\
        & \nabla_{x^N} \tilde{L}^N (x^N,x^{-N},\tilde{\lambda}^N;w^N) -v^N\\
        & -\tilde{G}^1(x^1;w^1) + \tu^1 \\
        & \vdots \\
        & -\tilde{G}^N (x^N;w^N)+ \tu^N\\
        \end{pmatrix},
    \end{equation}
    where for $k=1,...,N$, $\tilde{u}^k = (u^k_1,...,u^k_{|I^k_1|})$,
    $$
    \tilde{G}^k (x^k;w^k) = 
         \begin{pmatrix}
             g^k_1 (x^k;w^k)\\
             \vdots\\
            g^k_{|I^k_1|} (x^k;w^k)\\
        \end{pmatrix},
    $$
    and 
    $$
    \tilde{L}^k(x^k, x^{-k},\tilde{\lambda}^k;w^k) = f^k(x^k,x^{-k};w^k)+ \tilde{G}^k (x^k;w^k)^{\mathrm{T}} \tilde{\lambda}^k.
    $$
    We say that the the strict complementary slackness condition (SCSC) 
    holds for player $k$ at $(\bar{p}^k,\bar{x}^k,\bar{\lambda}^k)$ 
    if $I^k_2 = \emptyset$. Then the characterization of the continuously differentiable single-valued 
    localization of $\S$ is established.

    \begin{theorem}\label{thm-dif-eq}
        Let $ (\Bar{x}^1,\Bar{x}^2,...,\Bar{x}^N, \Bar{\lambda}^1,\Bar{\lambda}^2,...,\Bar{\lambda}^N)\in S_{\rm KKT} (\bar{p})$. The following are equivalent:
        \begin{enumerate}[i)]
            \item $S_{\rm KKT}$ has a continuously differentiable single-valued localization around $\bar{p}$ for $(\bar{x},\bar{\lambda})$;
            \item The SCSC and the LICQ hold for each player $k$ at $(\bar{p}^k,\bar{x}^k,\bar{\lambda}^k)$, $k=1,...,N$. Moreover, for any $y\in M$,
            $$
            \begin{pmatrix}
                \nabla^2_{x^1x^1}L^1 & \cdots & \nabla^2_{x^1 x^N}L^1\\
                \vdots & \cdots & \vdots\\
                \nabla^2_{x^Nx^1}L^N & \cdots & \nabla^2_{x^N x^N}L^N
            \end{pmatrix}
            \begin{pmatrix}
                y^1\\
                \vdots\\
                y^N
            \end{pmatrix}\in M^\perp \Longrightarrow y=0.
            $$
        \end{enumerate}
    \end{theorem}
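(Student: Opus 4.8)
The plan is to combine the strong‑regularity characterization of Theorem~\ref{thm-strong-eq} with the classical implicit function theorem applied to the reduced map $\mathbf{F}$ of \eqref{NEP-KKT-Reduce}. Write $\mathcal{H}$ for the $n\times n$ block matrix appearing in condition ii) (the $(k,i)$ block being $\hes_{x^kx^i}L^k$), and $\mathcal{A}$ for the block-diagonal matrix with diagonal blocks $\mathcal{J}_{x^k}\tilde G^k(\bx^k;\bw^k)$, $k=1,\dots,N$; then $\ker\mathcal{A}=M$ and ${\rm range}\,\mathcal{A}^{\mathrm{T}}=M^\perp$, and if the LICQ holds for every player then $\mathcal{A}^{\mathrm{T}}$ is injective. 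Note that the block matrix in condition ii) of Theorem~\ref{thm-strong-eq} is $\mathcal{H}^{\mathrm{T}}$; since $M$ is a subspace, the implications ``$\mathcal{H}y\in M^\perp$ and $y\in M$ $\Rightarrow$ $y=0$'' and ``$\mathcal{H}^{\mathrm{T}}y\in M^\perp$ and $y\in M$ $\Rightarrow$ $y=0$'' are equivalent (both assert that the compression $P_M\mathcal{H}|_M$ is nonsingular, and a matrix is nonsingular iff its adjoint is).

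For ii)$\Rightarrow$i): under the SCSC one has $I_2^k=\emptyset$, so $I_1^k$ is exactly the active index set of player $k$ at $\bp$, with strictly positive multipliers on the active inequalities. By continuity of the data, for $(x,\lambda)$ near $(\bx,\blam)$ and $p$ near $\bp$, $(x,\lambda)\in\S(p)$ iff the active set is again $I_1^k$ for every $k$, iff $(x,\mypi(\lambda))$ solves $\mathbf{F}(x,\tlam,p)=0$ while $\lambda^k_i=0$ for $i\in I_3^k$; the remaining conditions $g^k_i<u^k_i$ ($i\in I_3^k$) and $\lambda^k_i>0$ ($i\in I_1^k$, $i>s_k$) then hold automatically. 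A direct computation gives
$$
\mathcal{J}_{(x,\tlam)}\mathbf{F}\big(\bx,\mypi(\blam),\bp\big)=\begin{pmatrix}\mathcal{H}&\mathcal{A}^{\mathrm{T}}\\-\mathcal{A}&0\end{pmatrix},
$$
which, using $\ker\mathcal{A}=M$, ${\rm range}\,\mathcal{A}^{\mathrm{T}}=M^\perp$ and injectivity of $\mathcal{A}^{\mathrm{T}}$, is nonsingular exactly when the implication in ii) holds. The classical implicit function theorem then produces a $C^1$ single-valued solution $p\mapsto(x(p),\tlam(p))$ of $\mathbf{F}=0$ near $\bp$; setting $\lambda^k_i(p):=0$ for $i\in I_3^k$ yields a $C^1$ single-valued localization of $\S$ around $\bp$ for $(\bx,\blam)$, whose uniqueness comes from the ``only if'' direction of the active-set reduction together with the uniqueness clause of the implicit function theorem.

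For i)$\Rightarrow$ii): a $C^1$ single-valued localization is in particular Lipschitz continuous, so $\S$ is strongly regular at $\bp$ for $(\bx,\blam)$, and by Theorem~\ref{thm-strong-eq} the LICQ holds for every player and the critical-face condition holds for every partition in $\mathcal{I}$. Applying the critical-face condition to the (always admissible) partition with $J_1=I_1$, $J_2=\emptyset$, $J_3=I_2\cup I_3$, for which $K(J_1,J_2)=M$ and $K(J_1,J_2)^\circ=M^\perp$, and using the equivalence of the $\mathcal{H}$- and $\mathcal{H}^{\mathrm{T}}$-conditions on $M$ noted above, yields the implication in ii). It remains to establish the SCSC. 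Suppose $i_0\in I_2^k$ for some $k$; write $(x(p),\lambda(p))$ for the $C^1$ localization of $\S$, and let $\xi^l$ be the derivative of $x^l(\cdot)$ with respect to $u^k_{i_0}$ at $\bp$, $\xi=(\xi^1,\dots,\xi^N)$. The slackness function $p\mapsto u^k_{i_0}-g^k_{i_0}(x^k(p);w^k)$ is nonnegative and vanishes at $\bp$, so its gradient vanishes there; reading off the $u^k_{i_0}$-derivative gives $\nabla_{x^k}g^k_{i_0}(\bx^k;\bw^k)^{\mathrm{T}}\xi^k=1$. Since the active constraints of every player stay active near $\bp$, $\xi\in M$; and differentiating the stationarity equations — the multipliers on $I_2^l\cup I_3^l$ being at a minimum $0$, their $u^k_{i_0}$-derivatives vanish — gives $\mathcal{H}\xi\in{\rm range}\,\mathcal{A}^{\mathrm{T}}=M^\perp$. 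The implication in ii), just proved, forces $\xi=0$, contradicting $\nabla_{x^k}g^k_{i_0}(\bx^k;\bw^k)^{\mathrm{T}}\xi^k=1$. Hence $I_2^k=\emptyset$ for all $k$.

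The main obstacle is the necessity of the SCSC in i)$\Rightarrow$ii): unlike the Lipschitz or Aubin property, differentiability of a single-valued localization admits no coderivative-type test, so it must be exploited directly. What makes the argument elementary is that the implication in ii) can be read off from Theorem~\ref{thm-strong-eq} via the partition $(I_1,\emptyset,I_2\cup I_3)$ \emph{before} the SCSC is known, and then used to close a one-parameter perturbation argument. One must also keep track that the matrix in Theorem~\ref{thm-strong-eq} is the transpose of $\mathcal{H}$: the two yield equivalent conditions here only because $M$ is a linear subspace, while they are genuinely different matrices (off-diagonal blocks $\hes_{x^kx^i}f^k$ versus $\hes_{x^ix^k}f^i$).
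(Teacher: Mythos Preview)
Your proposal is correct and follows the same overall architecture as the paper: for ii)$\Rightarrow$i) apply the classical implicit function theorem to $\mathbf{F}$, and for i)$\Rightarrow$ii) invoke Theorem~\ref{thm-strong-eq} to obtain the LICQ and the $M$-condition, then prove the SCSC separately.

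The only substantive difference is in the SCSC necessity argument. The paper perturbs $u^k_{i_0}$ in one direction only: for $\varepsilon>0$ the point $(\bx,\blam)$ remains a KKT solution (the constraint becomes slack and $\blam^k_{i_0}=0$), so the single-valued localization is identically $(\bx,\blam)$ and its $\varepsilon$-derivative at $0$ vanishes; for $\varepsilon<0$, feasibility forces $g^k_{i_0}(x^k(p(\varepsilon));\bw^k)\le \bu^k_{i_0}+\varepsilon$, which upon differentiation gives $\nabla_{x^k}g^k_{i_0}(\bx^k;\bw^k)^{\mathrm{T}}\tfrac{d}{d\varepsilon}x^k|_{\varepsilon=0}\ge 1$, contradicting the vanishing derivative. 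This is more elementary in that it never uses the $M$-condition. Your argument instead differentiates the full KKT system, exploits that both the slackness and the multipliers on $I^l_2\cup I^l_3$ sit at a minimum $0$ to pin down $\xi\in M$ with $\mathcal{H}\xi\in M^\perp$, and then \emph{uses} the already-established $M$-condition to force $\xi=0$. Both routes are valid; yours is more structural and makes explicit why the $M$-condition is the ``right'' nondegeneracy statement, while the paper's avoids the dependency on that condition altogether. You also spell out two points the paper glosses over: why invertibility of $\mathcal{J}_{(x,\tlam)}\mathbf{F}$ is equivalent to the $M$-condition plus LICQ, and why the $\mathcal{H}$- and $\mathcal{H}^{\mathrm{T}}$-versions of the implication on $M$ coincide (needed because Theorem~\ref{thm-strong-eq} delivers the latter while condition~ii) states the former).
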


    \begin{proof}{Proof.}
        $ii)\Longrightarrow i)$: If $ii)$ holds, 
        it can be deduced that the partial derivative of $\mathbf{F}$ with respect to 
    $(x,\tlam)$ at $(\bx,\mypi(\blam),\bp)$   
    \begin{equation}\label{derivative}
        \mathcal{J}_{(x,\tlam)} \mathbf{F} (\bx,\mypi(\blam),\bp)= 
        \begin{pmatrix}
            \nabla^2_{x^1x^1}L^1 & \cdots & \nabla^2_{x^1 x^N}L^1 & (\mathcal{J}_{x^1} \tilde{G})^{\mathrm{T}} & \cdots & 0\\
            \vdots & \cdots & \vdots & \vdots & \ddots & \cdots\\
            \nabla^2_{x^Nx^1}L^N & \cdots & \nabla^2_{x^N x^N}L^N & 0 & \cdots & (\mathcal{J}_{x^N} \tilde{G}^N)^{\mathrm{T}}\\
            -\mathcal{J}_{x^1} \tilde{G}^1 & \cdots & 0 & 0 & \cdots & 0\\
            \vdots & \ddots & \vdots & \vdots & \cdots & \vdots\\
            0 & \cdots & -\mathcal{J}_{x^N} \tilde{G}^N & 0 & \cdots & 0\\
        \end{pmatrix}
        \end{equation}
    is invertible. Then by the classical implicit function theorem, there exist a neighborhood $U_1$ of $\bp$, a neighborhood $V\times \tilde{W}$ of 
    $(\bx,\mypi(\blam))$, and a  
    continuously differentiable function $U_1 \owns p \mapsto (x(p),\tlam(p))\in V\times \tilde{W}$ such that $\mathbf{F}(x(p),\tlam(p), p) = 0$. For $p\in U_1$, $k=1,...,N$, let 
    $$
        \lambda^k_i (p): = \left\{\begin{array}{ll}
            \tilde{\lambda}^k_i (p) & {\rm when}\ i\in\left\{ 1,...,|I^k_1|\right\},\\
            \bar{\lambda}^k_i = 0 & {\rm when}\ i\in \left\{ |I^k_1| +1 ,..., m_k \right\}.
        \end{array}\right.
    $$  
    If $g^k_i (\bar{w},\bar{x})-\bar{u}^k_i<0$, then  $g^k_i (w,x(p))-u^k_i<0$ in a neighborhood of $\bar{p}$ 
    due to the continuity of $g^k_i$. Consequently, there exist  sufficiently small neighborhoods $U\subset U_1$ of $\bp$ and $V\times W$ of $(\bx,\blam)$ such that $(x(p),\lambda(p))\in S_{\rm KKT}(p)\bigcap V\times W$, and the function $U\owns p \mapsto (x(p),\lambda(p))\in V \times W$ is continuously differentiable.
     From Theorem \ref{thm-strong-eq}, $S_{\rm KKT}$ has a Lipschitz continuous single-valued localization around $\bar{p}$ for $(\bx,\blam)$. This combined with the inclusion $(p,x(p),\lambda(p))\in \gph\, \S \cap U\times V \times W$ implies that $S_{\rm KKT}$ has a continuously differentiable single-valued localization around $\bar{p}$.
        
        $i)\Longrightarrow ii)$:  If $S_{\rm KKT}$ has a continuously differentiable single-valued localization around $\bar{p}$ for $(\bar{x},\bar{\lambda})$, it has a Lipschitz continuous single-valued localization around $\bar{p}$ for $(\bar{x},\bar{\lambda})$. Following from Theorem \ref{thm-strong-eq}, it suffices to prove that the SCSC holds for each player $k$ at $(\bar{p}^k,\bar{x}^k,\bar{\lambda}^k)$. Without loss of generality, assume that $i\in I^1_2\neq \emptyset$, and $U \owns p \mapsto (x(p), \lambda (p))\in \S (p) \bigcap V \times W$ is continuously differentiable. Let $p^1(\varepsilon)= (\bar{u}^1_1,...,\bar{u}^1_{i-1}, \bar{u}^1_i + \varepsilon,\bar{u}^1_{i+1},...,\bar{u}^1_{m_1}, \bar{v}^1,\bar{w}^1)$, and $p(\varepsilon)=(p^1(\varepsilon),\bar{p}^2,...,\bar{p}^N)$ for any $\varepsilon>0$ close to $0$. Since $\bar{\lambda}^1_i = 0$, for sufficiently small $\eps>0$, $(x(p(\eps)),\lambda (p(\eps))) = (\bx, \blam)$. Thus, $x^1(p(\varepsilon))$ must satisfy 
        $$
            \frac{d}{d\varepsilon} x^1(p({\varepsilon})) |_{\varepsilon=0} = 0.
        $$
        On the other hand, for any $\varepsilon<0$ close to $0$, the corresponding solution $x^1(p(\varepsilon))$ is feasible, which yields 
        $$
            -\bar{u}^1_i - \varepsilon + g^1_i (x^1(p(\varepsilon));\bar{w}^1) \le 0.
        $$
        As $-\bar{u}^1_i + g^1_i(\bar{x}^1; \bar{w}^1)=0$, we obtain that 
        $$
           \nabla_{x^1} g^1_i (\bar{x}^1 ;\bar{w}^1)^{\mathrm{T}}\frac{d}{d\varepsilon} x^1(p(\varepsilon)) |_{\varepsilon=0}\ge 1,
        $$
       which is a contradiction. \Halmos
    \end{proof}

    By Theorem \ref{thm-strong-suf} and Theorem \ref{thm-dif-eq}, we obtain Corollary \ref{coro-dif-suf}. It imposes stronger requirements on the problem for the continuously differentiable single-valued localization of $\S$, but at the same time, it ensures that the stationary points after perturbations are LNEs.

    \begin{corollary}\label{coro-dif-suf}
        Let $(\Bar{x}^1,\Bar{x}^2,...,\Bar{x}^N, \Bar{\lambda}^1,\Bar{\lambda}^2,...,\Bar{\lambda}^N)\in S_{\rm KKT}(\bar{p})$. Suppose that the following requirements are fulfilled:
        \begin{enumerate}[i)]
            \item The SCSC holds for each player $k$ at $(\bar{p}^k,\bar{x}^k,\bar{\lambda}^k)$;
            \item The LICQ holds for each player $k$ at $(\bar{p}^k,\bar{x}^k)$;
            \item The SOSC holds for each player $k$ at $(\bar{p}^k,\bar{x}^k,\bar{\lambda}^k)$;
            \item There exists a set of parameters 
            $$
                \left\{ \alpha_{i,j}>0 \, \bigg{|} \,  1\le i,j\le N,\, j\neq i,\, {\rm and}\  \sum\limits_{\substack{ j\neq i}} \alpha_{i,j} = 1\ {\rm for}\  i=1,...,N \right\}
            $$
            such that for any $0\neq y^k\in M^k$ and $A_{ki}: = \nabla^2_{x^kx^i}L^k+ (\nabla^2_{x^ix^k}L^i)^{\mathrm{T}}$, 
            $$
                (y^k)^{\mathrm{T}} 
                \left(\nabla^2_{x^kx^k}L^k -\frac{1}{4 \alpha_{i,k}\alpha_{k,i}}\left(A_{ik}^{\mathrm{T}} B^i \left(({B^i})^{\mathrm{T}} \nabla^2_{x^ix^i}L^i B^i\right)^{-1}({B^i})^{\mathrm{T}}A_{ik}\right)\right)
                y^k>0
            $$
            for each player $i$ and player $k$, where $i\neq k$.
        \end{enumerate}
        Then $S_{\rm KKT}$ has a continuously differentiable single-valued localization around $\bar{p}$ for $(\bar{x},\bar{\lambda})$, and for any $(p,x(p),\lambda(p))\in {\rm gph}\, S_{\rm KKT}$ around $(\bar{p},\bar{x},\bar{\lambda})$, $x(p)$ is an LNE of Problem (\ref{NEP_perturbations}).
    \end{corollary}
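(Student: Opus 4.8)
The plan is to obtain this corollary by combining Theorem~\ref{thm-strong-suf} with the equivalence established in Theorem~\ref{thm-dif-eq}; essentially no new analytic estimate is required, only a reconciliation of hypotheses. First I would note that the strict complementary slackness condition i) means $I^k_2 = \emptyset$ for every player $k$, so the critical cone $\mathcal{C}(\bar{p}^k,\bar{x}^k,\bar{\lambda}^k)$ collapses to the subspace $M^k = \{y^k \mid \nabla_{x^k}g^k_i(\bar{x}^k;\bar{w}^k)^{\mathrm{T}}y^k = 0,\ i\in I^k_1\}$. Hence the SOSC of condition iii), namely $(y^k)^{\mathrm{T}}\nabla^2_{x^kx^k}L^k\,y^k>0$ for all $0\neq y^k\in\mathcal{C}(\bar{p}^k,\bar{x}^k,\bar{\lambda}^k)$, becomes exactly the SSOSC at $(\bar{p}^k,\bar{x}^k,\bar{\lambda}^k)$. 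Together with the LICQ in ii) and the parameter condition in iv), this shows that hypotheses i)--iv) of the corollary imply hypotheses i)--iii) of Theorem~\ref{thm-strong-suf}. Invoking that theorem, $S_{\rm KKT}$ has a Lipschitz continuous single-valued localization around $\bar{p}$ for $(\bar{x},\bar{\lambda})$, and every nearby $x(p)$ with $(p,x(p),\lambda(p))\in\gph\,S_{\rm KKT}$ is an LNE of Problem~\eqref{NEP_perturbations}; this already disposes of the LNE assertion in the corollary.

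It then remains to strengthen ``Lipschitz'' to ``continuously differentiable''. For this I would apply the characterization ii) of Theorem~\ref{thm-dif-eq}: since the SCSC (i) and the LICQ (ii) for each player are among the hypotheses, it suffices to verify that for any $y\in M$,
$$
\begin{pmatrix}
\nabla^2_{x^1x^1}L^1 & \cdots & \nabla^2_{x^1 x^N}L^1\\
\vdots & \cdots & \vdots\\
\nabla^2_{x^Nx^1}L^N & \cdots & \nabla^2_{x^N x^N}L^N
\end{pmatrix}
\begin{pmatrix} y^1\\ \vdots\\ y^N\end{pmatrix}\in M^\perp
\quad\Longrightarrow\quad y=0.
$$
The relevant input is the intermediate fact proved inside Theorem~\ref{thm-strong-suf}: under conditions i)--iii) there, the block Hessian $H$ displayed above satisfies $y^{\mathrm{T}}Hy>0$ for every $0\neq y\in M$. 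Now if $y\in M$ and $Hy\in M^\perp$, then $\langle y, Hy\rangle = 0$ because $M$ and $M^\perp$ are orthogonal, while $\langle y, Hy\rangle = y^{\mathrm{T}}Hy$; strict positivity forces $y=0$. This verifies condition ii) of Theorem~\ref{thm-dif-eq}, and that theorem yields the continuously differentiable single-valued localization of $S_{\rm KKT}$ around $\bar{p}$ for $(\bar{x},\bar{\lambda})$.

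The argument carries essentially no serious obstacle. The two points deserving care are: (a) recording cleanly that SCSC forces $\mathcal{C}(\bar{p}^k,\bar{x}^k,\bar{\lambda}^k)=M^k$, so that ``SCSC $+$ SOSC'' is precisely ``SSOSC'', which is what lets us feed the corollary's hypotheses into Theorem~\ref{thm-strong-suf}; and (b) reusing the positive-definiteness-on-$M$ estimate from the proof of Theorem~\ref{thm-strong-suf} verbatim --- in particular noting that $y^{\mathrm{T}}Hy$ depends only on the symmetric part of $H$, so that the Schur-complement manipulation with the matrices $A_{ki}$ already carried out there is exactly what is needed. Everything else is a direct citation of Theorems~\ref{thm-strong-suf} and~\ref{thm-dif-eq}.
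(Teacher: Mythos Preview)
Your proposal is correct and follows essentially the same route as the paper, which simply cites Theorems~\ref{thm-strong-suf} and~\ref{thm-dif-eq} without further detail. Your explicit observation that SCSC collapses the critical cone to $M^k$, so that SOSC becomes SSOSC, and your reuse of the positive-definiteness-on-$M$ estimate from the proof of Theorem~\ref{thm-strong-suf} to verify condition ii) of Theorem~\ref{thm-dif-eq}, are exactly the reconciliation steps the paper leaves implicit.
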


    \begin{remark}
        When $N=1$, Corollary \ref{coro-dif-suf} corresponds to the well-known result on the continuously differentiable single-valued localization of $S^1_{\rm KKT}$ in NLP, as established by \citet{fiacco1990nonlinear}. This result implies that the LICQ, the SCSC and the SOSC ensure the continuously differentiable single-valued localization of $S^1_{\rm KKT}$.
    \end{remark}

    Through the proof of Theorem \ref{thm-dif-eq}, we observe that, same as strong regularity, if $\T$ has a continuously differentiable single-valued localization around $(\bar{u},\bar{v})$ for $(\bar{x},\bar{\lambda})$, then $S_{\rm KKT}$ also has a continuously differentiable single-valued localization around $\bar{p}$ for $(\bar{x},\bar{\lambda})$. In other words, investigating the strong regularity and the continuously differentiable single-valued localization of $\T$ is equivalent to studying those of $S_{\rm KKT}$.

    \begin{corollary}\label{coro-dif-til}
        Let $ (\Bar{x}^1,\Bar{x}^2,...,\Bar{x}^N, \Bar{\lambda}^1,\Bar{\lambda}^2,...,\Bar{\lambda}^N)\in S_{\rm KKT} (\bar{p})$. The following are equivalent:
        \begin{enumerate}[i)]
            \item $S_{\rm KKT}$ has a continuously differentiable single-valued localization around $\bar{p}$ for $(\bar{x},\bar{\lambda})$;
            \item $\T$ has a continuously differentiable single-valued localization around $(\bar{u},\bar{v})$ for $(\bar{x},\bar{\lambda})$.
        \end{enumerate}
    \end{corollary}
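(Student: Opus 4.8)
The plan is to derive Corollary~\ref{coro-dif-til} directly from the proof technique of Theorem~\ref{thm-dif-eq} together with Corollary~\ref{coro-strong-til}, rather than re-deriving everything from scratch. The key observation is that the characterization in part~$ii)$ of Theorem~\ref{thm-dif-eq}---the SCSC and LICQ for each player plus the nonsingularity-type implication on the Hessian block matrix over $M$---involves only the data $(\bar{x},\bar{\lambda},\bar{w})$ and the index sets $I^k_1,I^k_2,I^k_3$, none of which depend on which perturbation parameters are allowed to vary. Since $\T(u,v)=S_{\rm KKT}(u,v,\bar w)$, the fixed point $(\bar x,\bar\lambda)$ and all associated index-set and Hessian data coincide for $\T$ at $(\bar u,\bar v)$ and for $S_{\rm KKT}$ at $\bar p$.

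First I would invoke Theorem~\ref{thm-dif-eq} for $S_{\rm KKT}$: condition $i)$ there holds if and only if condition $ii)$ holds. Then I would observe that the same equivalence, proved by exactly the same argument, applies to $\T$ in place of $S_{\rm KKT}$: the forward direction $ii)\Rightarrow i)$ uses the implicit function theorem applied to the reduced map $\mathbf F$ in \eqref{NEP-KKT-Reduce} with the full parameter $p$ frozen at $\bar w$ in its last block---this is still a valid application of the classical implicit function theorem in the remaining variables $(u,v)$, and the resulting single-valued localization is continuously differentiable in $(u,v)$; for the Aubin-property ingredient one appeals to Corollary~\ref{coro-strong-til} rather than Theorem~\ref{thm-strong-eq}. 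The reverse direction $i)\Rightarrow ii)$ is even easier: a continuously differentiable single-valued localization of $\T$ around $(\bar u,\bar v)$ in particular yields one along the curve $p^1(\varepsilon)$ used in the proof of Theorem~\ref{thm-dif-eq} (which perturbs only a component of $u^1$, leaving $w$ fixed), so the SCSC argument there goes through verbatim, giving condition $ii)$. Hence $ii)$ is equivalent to statement $i)$ of the corollary and also, by Theorem~\ref{thm-dif-eq}, to statement $ii)$ of the corollary, which proves the equivalence. Alternatively, and even more briefly: $i)\Rightarrow ii)$ is immediate since fixing $w=\bar w$ is a restriction of the perturbation, and it is routine that a continuously differentiable single-valued localization restricts to one on any subspace of parameters through $\bar p$; while $ii)\Rightarrow i)$ is the genuine content and follows because $\T$ having a continuously differentiable localization forces condition $ii)$ of Theorem~\ref{thm-dif-eq} (via the same SCSC-curve and Aubin-property arguments, using Corollary~\ref{coro-strong-til}), and condition $ii)$ then yields $i)$ by Theorem~\ref{thm-dif-eq}.

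The main obstacle, such as it is, is the implication ``$\T$ has a continuously differentiable single-valued localization $\Rightarrow$ the Hessian nonsingularity condition in Theorem~\ref{thm-dif-eq}$ii)$ holds.'' For $S_{\rm KKT}$ this followed because a $C^1$ localization implies a Lipschitz localization, which by Theorem~\ref{thm-strong-eq} gave the ``critical face'' conditions and in particular the required nonsingularity. For $\T$ we cannot directly quote Theorem~\ref{thm-strong-eq} (stated for $S_{\rm KKT}$), so the careful step is to note that $\T$ having a Lipschitz single-valued localization around $(\bar u,\bar v)$ is, by Corollary~\ref{coro-strong-til}, equivalent to $S_{\rm KKT}$ having one around $\bar p$, which is equivalent to the conditions of Theorem~\ref{thm-strong-eq}; the SCSC part is then recovered exactly as in the $i)\Rightarrow ii)$ portion of the proof of Theorem~\ref{thm-dif-eq}, since the perturbation curve used there stays within the domain of $\T$. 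I expect the author's proof to simply write ``This follows from the proof of Theorem~\ref{thm-dif-eq} and Corollary~\ref{coro-strong-til},'' and I would structure mine the same way, spelling out only that (a) the $C^1$ localization of $\T$ restricts trivially from the $C^1$ localization of $S_{\rm KKT}$ and conversely extends because condition $ii)$ of Theorem~\ref{thm-dif-eq} is parameter-agnostic, and (b) Corollary~\ref{coro-strong-til} supplies the Lipschitz/Aubin ingredient for $\T$.
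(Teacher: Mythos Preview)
Your proposal is correct and follows essentially the same approach as the paper: the implication $i)\Rightarrow ii)$ is immediate by restriction, and for $ii)\Rightarrow i)$ you use Corollary~\ref{coro-strong-til} to upgrade the Lipschitz localization of $\T$ to that of $S_{\rm KKT}$ (hence obtaining LICQ and the Hessian nonsingularity via Theorem~\ref{thm-strong-eq}), and then recover the SCSC by the perturbation-curve argument of Theorem~\ref{thm-dif-eq}, which perturbs only a component of $u$ and therefore lies within the domain of $\T$. Your anticipation that the paper's proof is essentially a pointer to Theorem~\ref{thm-dif-eq} and Corollary~\ref{coro-strong-til} is accurate.
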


    \begin{proof}{Proof.}
        The implication $i)\Longrightarrow ii) $ is straightforward. It suffices to prove that $ii)\Longrightarrow i)$. Following from Corollary \ref{coro-strong-til},  $S_{\rm KKT} (p)$ has a Lipschitz continuous  single-valued localization around $\bar{p}$ for $(\bar{x},\bar{\lambda})$. Consequently, the LICQ holds for each player $k$ at $(\bar{p}^k,\bar{x}^k)$, $k=1,...,N$. Moreover, for any $y\in M$,
        $$
            \begin{pmatrix}
                \nabla^2_{x^1x^1}L^1 & \cdots & \nabla^2_{x^1 x^N}L^1\\
                \vdots & \cdots & \vdots\\
                \nabla^2_{x^Nx^1}L^N & \cdots & \nabla^2_{x^N x^N}L^N
            \end{pmatrix}
            \begin{pmatrix}
                y^1\\
                \vdots\\
                y^N
            \end{pmatrix}\in M^\perp \Longrightarrow y=0.
            $$
        It suffices to prove that the SCSC holds for each player $k$ at $(\bar{p}^k,\bar{x}^k,\bar{\lambda}^k)$. Since $\T$ has a continuously differentiable single-valued localization around $(\bar{u},\bar{v})$ for $(\bar{x},\bar{\lambda})$, the proof is similar to that of Theorem \ref{thm-dif-eq}. \Halmos
        
        \end{proof}
\section{The robust isolated calmness of $S_{\rm KKT}$}\label{Robust}

Following from Lemma \ref{lem-iso}, the isolated calmness of $S_{\rm KKT}$ at $\bar{p}$ for $(\bar{x},\bar{\lambda})$ can be derived by focusing on the condition when $(x,\lambda) = (\bar{x},\bar{\lambda})$ is an isolated value of $L_{\rm KKT}(\bu,\bv)$. Recall that $L_{\rm KKT}(u,v)$ is the set of all points $(x,\lambda)$ solving the following generalized equation system
\begin{equation}\label{NEP-LKKT-local-calmness}
    \left\{\begin{array}{rl}
        v^1 &= \nabla_{x^1}L^1(\bar{x}^1,\bar{x}^{-1},\bar{\lambda}^1;\bar{w}^1)+ \sum_{i=1}^{N} \nabla^2_{x^1 x^i}L^1(\bar{x}^1,\bar{x}^{-1},\bar{\lambda}^1;\bar{w}^1)(x^i-\bar{x}^{i})\\
        &+\mathcal{J}_{x^1}G^1(\bar{x}^1;\bar{w}^1)^{\mathrm{T}} (\lambda^1-\bar{\lambda}^1),\\
        &\vdots\\
        v^N &= \nabla_{x^N}L^N(\bar{x}^N,\bar{x}^{-N},\bar{\lambda}^N;\bar{w}^N)+ \sum_{i=1}^{N} \nabla^2_{x^N x^i}L^N(\bar{x}^N,\bar{x}^{-N},\bar{\lambda}^N;\bar{w}^N)(x^i-\bar{x}^{i})\\
        &+\mathcal{J}_{x^N}G^N(\bar{x}^N;\bar{w}^N)^{\mathrm{T}} (\lambda^N-\bar{\lambda}^N),\\
        - u^1 &\in -G^1(\bar{x}^1;\bar{w}^1)-\mathcal{J}_{x^1}G^1(\bar{x}^1;\bar{w}^1)(x^1-\bar{x}^1)+N_{\mathbb{R}^{s_1} \times \mathbb{R}_+^{m_1-s_1}}(\lambda^1),\\
        &\vdots\\
        - u^N &\in -G^N(\bar{x}^N;\bar{w}^N)-\mathcal{J}_{x^N}G^N(\bar{x}^N;\bar{w}^N)(x^N-\bar{x}^N)+N_{\mathbb{R}^{s_N} \times \mathbb{R}_+^{m_N-s_N}}(\lambda^N)\\
    \end{array}\right.
\end{equation}
at the given point $(\bp,\bx,\blam)\in \gph\, \S$, and $(\bp,\bx,\blam)$ satisfies the generalized equation system
$$
\left\{\begin{array}{rl}
    \bar{v}^1 &= \nabla_{x^1} L^1  (\bar{x}^1,\bar{x}^{-1},\bar{\lambda}^1;\bar{w}^1),\\
    &\vdots\\
    \bar{v}^N &= \nabla_{x^N} L^N  (\bar{x}^N,\bar{x}^{-N},\bar{\lambda}^N;\bar{w}^N),\\
    -\bar{u}^1 &\in -G^1  (\bar{x}^1; \bar{w}^1) + N_{\mathbb{R}^{s_1} \times \mathbb{R}_+^{m_1-s_1}} (\bar{\lambda}^1),\\
    &\vdots\\
    -\bar{u}^N &\in -G^N(\bar{x}^N;\bar{w}^N)+N_{\mathbb{R}^{s_N} \times \mathbb{R}_+^{m_N-s_N}} (\bar{\lambda}^N).
\end{array}\right.
$$

We introduce two properties below. The first property, along with some basic assumptions, ensures the isolation of $(\bar{x},\bar{\lambda})$. More importantly, the latter property further guarantees the robust isolated calmness of $S_{\rm KKT}$ at $\bar{p}$ for $(\bar{x},\bar{\lambda})$ under a certain constraint qualification and convex assumptions. 

\begin{definition}\label{def-I-pro}
    We say that Problem (\ref{NEP_perturbations}) satisfies {\rm the I-property} at $(\bar{p},\bar{x},\bar{\lambda})$ on $K(I_1,I_2)$ if for any $y\in K(I_1,I_2)$,
    $$
        \sum\limits_{i=1}^N (y^k)^{\mathrm{T}} \nabla^2_{x^kx^i}L^k(\bar{x}^k,\bar{x}^{-k},\bar{\lambda}^k;\bar{w}^k) y^i =0 \ {\rm for}\ k=1,...,N \Longrightarrow y = 0.
    $$
\end{definition}

\begin{definition}\label{def-P-pro}
    We say that Problem (\ref{NEP_perturbations}) satisfies {\rm the P-property} at $(\bar{p},\bar{x},\bar{\lambda})$ on $K(I_1,I_2)$ if for any $0\neq y\in  K(I_1,I_2)$,
    $$
        \max\limits_{k=1,...,N}
        \sum\limits_{i\neq k}^N (y^k)^{\mathrm{T}} \nabla^2_{x^kx^i}L^k(\bar{x}^k,\bar{x}^{-k},\bar{\lambda}^k;\bar{w}^k) y^i + \frac{1}{2}(y^k)^{\mathrm{T}} \nabla^2_{x^k x^k} L^k (\bar{x}^k,\bar{x}^{-k},\bar{\lambda}^k;\bar{w}^k) y^k>0. 
    $$
\end{definition}

\begin{remark}\label{rmk-I-P-pro}
    It is clear that
    \begin{enumerate}[i)]
        \item If $\bar{x}^k$ is a local minimizer for each player $k$ with the associated multiplier $\bar{\lambda}^k$, then {\rm the P-property} $\Longrightarrow$ {\rm the I-property};
        \item In NLP, {\rm the I-property} together with the SONC reduces to the SOSC;
        \item In NLP, {\rm the P-property} reduces to the SOSC.
    \end{enumerate}
    
\end{remark}

We now provide a theorem concerning the isolated calmness of $S_{\rm KKT}$ based on Definition \ref{def-I-pro}.
\begin{theorem}\label{thm-iso}
    Let $(\Bar{x}^1,\Bar{x}^2,...,\Bar{x}^N, \Bar{\lambda}^1,\Bar{\lambda}^2,...,\Bar{\lambda}^N)\in S_{\rm KKT}(\bar{p})$, and the following requirements are fulfilled: 
        \begin{enumerate}[i)]
            \item The SMFCQ holds at $(\bar{p}^k,\bar{x}^k,\bar{\lambda}^k)$ for each player $k$;
            \item Problem (\ref{NEP_perturbations}) satisfies {\rm the I-property} at $(\bar{p},\bar{x},\bar{\lambda})$ on $K(I_1,I_2)$.
        \end{enumerate}
        Then $S_{\rm KKT}$ is isolated calm at $\bar{p}$ for $ (\bar{x},\bar{\lambda})$.
\end{theorem}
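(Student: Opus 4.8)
The plan is to reduce the statement, via Lemma \ref{lem-iso}, to showing that $(\bx,\blam)$ is an \emph{isolated} point of the solution set $\L(\bu,\bv)$ of the linearized generalized equation \eqref{NEP-LKKT-local-calmness} (with $(u,v)$ fixed at $(\bu,\bv)$), and then to derive a contradiction from a convergent sequence. So I would suppose, to the contrary, that there are $(x^\nu,\lambda^\nu)\in\L(\bu,\bv)$ with $(x^\nu,\lambda^\nu)\to(\bx,\blam)$ and $(x^\nu,\lambda^\nu)\neq(\bx,\blam)$, put $t_\nu:=\|(x^\nu-\bx,\lambda^\nu-\blam)\|\downarrow 0$, and — along a subsequence — $(\xi^\nu,\eta^\nu):=(x^\nu-\bx,\lambda^\nu-\blam)/t_\nu\to(\xi,\eta)$ with $\|(\xi,\eta)\|=1$.

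First I would extract the homogeneous ``critical'' system satisfied by $(\xi,\eta)$. Subtracting the generalized equation that $(\bp,\bx,\blam)$ satisfies and using that all the Hessians $\hes_{x^kx^i}L^k$ and Jacobians $\mathcal J_{x^k}G^k$ in \eqref{NEP-LKKT-local-calmness} are frozen at $(\bx,\blam;\bw)$, the stationarity rows give, already at each $\nu$,
$$ \textstyle\sum_{i=1}^N\hes_{x^kx^i}L^k\,\xi^{i,\nu}+\mathcal J_{x^k}G^k{}^{\mathrm{T}}\eta^{k,\nu}=0,\qquad k=1,\dots,N, $$
hence the same identity holds for $(\xi,\eta)$ in the limit. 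Writing $h^k:=G^k(\bx^k;\bw^k)-\bu^k$, so that $h^k\in N_{\R^{s_k}\times\R_+^{m_k-s_k}}(\blam^k)$ by the KKT conditions, the complementarity rows read $h^k+t_\nu\mathcal J_{x^k}G^k\xi^{k,\nu}\in N_{\R^{s_k}\times\R_+^{m_k-s_k}}(\blam^k+t_\nu\eta^{k,\nu})$; letting $t_\nu\downarrow 0$ and invoking the polyhedral reduction for normal-cone mappings (the tangent cone to $\gph N_{\R^{s_k}\times\R_+^{m_k-s_k}}$ at $(\blam^k,h^k)$ is $\gph N_{K^k}$, where $K^k$ is the critical cone of $\R^{s_k}\times\R_+^{m_k-s_k}$ at $\blam^k$ for $h^k$, with components $\R$ on $I_1^k$, $\R_+$ on $I_2^k$, $\{0\}$ on $I_3^k$ — equivalently the polar of the cone in Proposition \ref{prop-graOfN}), I would obtain $\mathcal J_{x^k}G^k\xi^k\in N_{K^k}(\eta^k)$ for every $k$. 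Reading this inclusion off component-wise then yields: $\nabla_{x^k}g^k_i(\bx^k;\bw^k)^{\mathrm{T}}\xi^k=0$ for $i\in I_1^k$ and $\le 0$ for $i\in I_2^k$, i.e.\ $\xi=(\xi^1,\dots,\xi^N)\in K(I_1,I_2)$; $\eta^k_i\ge 0$ for $i\in I_2^k$ and $\eta^k_i=0$ for $i\in I_3^k$; and the orthogonality $\langle\mathcal J_{x^k}G^k\xi^k,\eta^k\rangle=0$ (since $\eta^k\in K^k$ while $\mathcal J_{x^k}G^k\xi^k\in N_{K^k}(\eta^k)\subset(K^k)^\circ$).

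Next I would use the two hypotheses in turn. Pairing the $k$-th stationarity identity with $\xi^k$ and using the orthogonality,
$$ \textstyle\sum_{i=1}^N(\xi^k)^{\mathrm{T}}\hes_{x^kx^i}L^k\,\xi^i=-\langle\mathcal J_{x^k}G^k\xi^k,\eta^k\rangle=0,\qquad k=1,\dots,N. $$
Since $\xi\in K(I_1,I_2)$, the I-property (Definition \ref{def-I-pro}) forces $\xi=0$. Substituting $\xi=0$ into the stationarity identities and using $\eta^k_i=0$ on $I_3^k$ leaves $\sum_{i\in I_1^k\cup I_2^k}\eta^k_i\,\nabla_{x^k}g^k_i(\bx^k;\bw^k)=0$ with $\eta^k_i\ge 0$ on $I_2^k$. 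Here SMFCQ at $(\bp^k,\bx^k,\blam^k)$ closes the argument: pairing this identity with the vector $y^k$ from part ii) of the definition of SMFCQ gives $\sum_{i\in I_2^k}\eta^k_i(\nabla_{x^k}g^k_i{}^{\mathrm{T}}y^k)=0$ with every summand $\le 0$, hence $\eta^k_i=0$ on $I_2^k$; then $\sum_{i\in I_1^k}\eta^k_i\,\nabla_{x^k}g^k_i=0$ and the linear independence in part i) gives $\eta^k_i=0$ on $I_1^k$. Thus $\eta=0$, contradicting $\|(\xi,\eta)\|=1$. Consequently $(\bx,\blam)$ is isolated in $\L(\bu,\bv)$, and Lemma \ref{lem-iso} yields the isolated calmness of $\S$ at $\bp$ for $(\bx,\blam)$.

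The hard part will be the middle step: making the passage to the limit in the polyhedral complementarity rows rigorous, i.e.\ identifying the limit of $(\eta^{k,\nu},\mathcal J_{x^k}G^k\xi^{k,\nu})$ as a point of $\gph N_{K^k}$. This rests on the facts that $\gph N_{\R^{s_k}\times\R_+^{m_k-s_k}}$ is a finite union of polyhedral cones (so geometrically derivable) whose tangent cone at $(\blam^k,h^k)$ equals $\gph N_{K^k}$, together with careful bookkeeping over the index sets $I^k_1,I^k_2,I^k_3$ of \eqref{def-index-NEP} — conveniently encoded by Proposition \ref{prop-graOfN}. Everything else is linear algebra: the I-property is used only to annihilate the primal direction $\xi$ on $K(I_1,I_2)$, and the two halves of SMFCQ are then exactly what is needed to annihilate the dual direction $\eta$ (the strict inequality on $I_2^k$ handles the sign-constrained multiplier components there, the linear-independence condition handles $I_1^k$).
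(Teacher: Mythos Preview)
Your proof is correct and follows the same architecture as the paper's: reduce via Lemma~\ref{lem-iso} to the isolation of $(\bx,\blam)$ in $\L(\bu,\bv)$, extract a homogeneous ``critical'' system for the direction, then use the I-property to kill the primal part and SMFCQ to kill the dual part.

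The only tactical difference is in how the critical system is obtained. You normalize and pass to a limit, invoking the tangent-cone formula $T_{\gph N_C}(\blam^k,h^k)=\gph N_{K^k}$ for polyhedral $C$. The paper avoids limits entirely: because the linearized system \eqref{NEP-LKKT-local-calmness} is already affine in $(x,\lambda)$ with frozen coefficients, an index-set argument shows that for any nearby $(x_l,\lambda_l)\in\L(\bu,\bv)$ the \emph{exact} increment $(x_l-\bx,\lambda_l-\blam)$ solves the critical system \eqref{isolated calmness} --- no normalization needed. Similarly, in the dual step the paper appeals to SMFCQ via uniqueness of the multiplier (observing that $\blam+\Delta\lambda'$ is again a multiplier at $\bx$), whereas you unpack the two defining conditions of SMFCQ directly; these are of course equivalent. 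Your limit-based route is a bit heavier here but has the virtue of generalizing verbatim to non-polyhedral cones; the paper's direct argument is the more elementary one in the present polyhedral setting.
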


\begin{proof}{Proof.}
   From Lemma \ref{lem-iso}, we aim to prove that $(x,\lambda) = (\bar{x},\bar{\lambda})$ is an isolated point of $\L (\bu,\bv)$.
    We begin by demonstrating that $\L (\bu,\bv)$ has an isolated point $(\bar{x},\bar{\lambda})$ if the following generalized equation system
    \begin{equation}\label{isolated calmness}
       \left\{\begin{array}{ll}
        \begin{pmatrix}
            \nabla^2_{x^1x^1}L^1 & \cdots & \nabla^2_{x^1 x^N}L^1\\
            \vdots & \cdots & \vdots\\
            \nabla^2_{x^Nx^1}L^N & \cdots & \nabla^2_{x^N x^N}L^N
        \end{pmatrix}
        \begin{pmatrix}
            y^1\\
            \vdots\\
            y^N\\
            \end{pmatrix} + 
            
            \begin{pmatrix}
                (\mathcal{J}_{x^1} G^1)^{\mathrm{T}} & \cdots & 0\\
                 \vdots & \ddots & \vdots\\
                0  & \cdots & (\mathcal{J}_{x^N} G^N)^{\mathrm{T}}
            \end{pmatrix}
            \begin{pmatrix}
            \Delta \lambda^1\\
            \vdots\\
            \Delta \lambda^N\\
            \end{pmatrix} = 0,\\
    
        \nabla_{x^k} g^k_i (\bar{x}^k;\bar{w}^k)^{\mathrm{T}} y^k =0, \  \quad {\rm when}\ k=1,...,N, \ i\in I^k_1,\\
        0\le \Delta \lambda^k_i \perp -\nabla_{x^k} g^k_i (\bar{x}^k;\bar{w}^k)^{\mathrm{T}} y^k \ge 0 \quad {\rm when}\ k=1,...,N, \ i\in I^k_2,\\
        \Delta \lambda^k_ i =0 \quad {\rm when}\ k=1,...,N,\ i\in I^k_3
       \end{array}\right.
    \end{equation}
    at the given point $(\bp,\bx,\blam)\in \gph\, \S$ has $(y,\Delta \lambda) = (0,0)$ as an isolated solution. Assuming the contrary, let $(x_l, \lambda_l)$ be a sequence of solutions to (\ref{NEP-LKKT-local-calmness}) that is sufficiently close to $(\bar{x},\bar{\lambda})$. Since 
    $$
        -\bar{u}^k_i+ g^k_i (\bar{x}^k;\bar{w}^k)<0 \quad  {\rm for}\ {\rm  all}\ i\in I^k_3,
    $$
    it follows that $\lambda_{li}^k = 0$ for all $i\in I^k_3$ when $\lambda_l$ is sufficiently close to $\bar{\lambda}$, where $\lambda^k_{li}$ denotes the $i$-th component of player $k$'s Lagrange multiplier vector associated with $x_l$. Moreover,
    \begin{equation}\label{thm-isolated-calm-eq1}
        -\bar{u}^k_i+ g^k_i (\bar{x}^k;\bar{w}^k)+ \nabla_{x^k} g^k_i (\bar{x}^k;\bar{w}^k)^{\mathrm{T}} (x_l^k - \bar{x}^k) \in N_{\mathbb{R}_+}(\lambda_{li}^k),
    \end{equation}
     where $x^k_l$ denotes the player $k$'s strategy associated with $x_l$. Since  
     $$
     -\bar{u}^k_i+ g^k_i (\bar{x}^k;\bar{w}^k) =0 \quad  {\rm for}\ {\rm all}\ i\in I^k_1 \bigcup I^k_2,
     $$
     combined with \eqref{thm-isolated-calm-eq1}, we can obtain that  $\nabla_{x^k} g^k_i (\bar{x}^k;\bar{w}^k)^{\mathrm{T}} (x^k_l - \bx^k) =0$ for all $i\in I^k_1$, and
     $$
     0 \le \lambda^k_{li} \perp -\nabla_{x^k} g^k_i (\bar{x}^k;\bar{w}^k)^{\mathrm{T}} (x_l^k - \bar{x}^k) \ge 0 \quad {\rm for}\ {\rm all}\ i\in I^k_2.
     $$
     Consequently, $(y_l, \Delta \lambda_l) = (x_l-\bar{x}, \lambda_l- \bar{\lambda})$ is a solution to the generalized equation system (\ref{isolated calmness}) at $(\bp,\bx,\blam)\in \gph\, \S$, and it can be sufficiently close to $(0,0)$, which leads to a contradiction. We now prove that (\ref{isolated calmness}) has $(y,\Delta \lambda) = (0,0)$ as an isolated solution. We assume in contradiction that there exists a solution $(y^\prime, \Delta \lambda^\prime)$ close enough to $(0,0)$. If $y^\prime \neq 0$, multiplying the first equality of (\ref{isolated calmness}) by  
     $$
     \begin{pmatrix}
        (y^{\prime1})^{\mathrm{T}}\\
        \vdots\\
        (y^{\prime N})^{\mathrm{T}} 
     \end{pmatrix},
     $$
    we then obtain that $0\neq y^\prime \in K(I_1, I_2)$, and
    $$
    \sum\limits_{i=1}^N (y^{\prime k})^{\mathrm{T}} \nabla^2_{x^kx^i}L^k(\bar{x}^k,\bar{x}^{-k},\bar{\lambda}^k;\bar{w}^k) y^{\prime i} =0 \ {\rm for}\ k=1,...,N,
    $$
    which is a contradiction to the I-property at $(\bar{p},\bar{x},\bar{\lambda})$. Consequently, $y^\prime = 0$ and $\Delta \lambda^\prime \neq 0$. Then when $\Delta \lambda^\prime$ is sufficiently close to $0$, $\bar{\lambda}+\Delta \lambda^\prime$ is a Lagrange multiplier associated with $\bar{x}$ at $\bar{p}$, which is also a contradiction to the SMFCQ. \Halmos
    \end{proof}

Similar to strong regularity and continuously differentiable single-valued localization, investigating the isolated calmness of $S_{\rm KKT}$ is equivalent to studying the isolated calmness of $\T$.

\begin{corollary}\label{coro-iso-til}
    Let $ (\Bar{x}^1,\Bar{x}^2,...,\Bar{x}^N, \Bar{\lambda}^1,\Bar{\lambda}^2,...,\Bar{\lambda}^N)\in S_{\rm KKT} (\bar{p})$. The following are equivalent:
    \begin{enumerate}[i)]
        \item $S_{\rm KKT}$ is  isolated calm at $\bar{p}$ for $(\bar{x},\bar{\lambda})$;
        \item $\T$ is  isolated calm at $(\bar{u},\bar{v})$ for $(\bar{x},\bar{\lambda})$.
    \end{enumerate}
\end{corollary}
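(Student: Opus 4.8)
The plan is to funnel both isolated-calmness statements through the linearized mapping $L_{\rm KKT}$ and then apply Lemma \ref{lem-iso}, exactly as Corollaries \ref{coro-strong-til} and \ref{coro-dif-til} were obtained from the corresponding characterization results. The implication $i)\Rightarrow ii)$ is immediate from the definition of isolated calmness: if $S_{\rm KKT}$ is isolated calm at $\bar p=(\bar u,\bar v,\bar w)$ for $(\bar x,\bar\lambda)$ with modulus $\kappa$ and neighborhoods $U$ of $\bar p$, $V$ of $(\bar x,\bar\lambda)$, then for $(u,v)$ in the neighborhood $U'$ of $(\bar u,\bar v)$ determined by $(u,v,\bar w)\in U$ we have $\T(u,v)\cap V=S_{\rm KKT}(u,v,\bar w)\cap V\subset\{(\bar x,\bar\lambda)\}+\kappa\|(u,v,\bar w)-\bar p\|\,\mathbb B=\{(\bar x,\bar\lambda)\}+\kappa\|(u,v)-(\bar u,\bar v)\|\,\mathbb B$, so $\T$ is isolated calm at $(\bar u,\bar v)$ for $(\bar x,\bar\lambda)$ with the same modulus.

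For $ii)\Rightarrow i)$ the key observation I would make first is that the generalized equation \eqref{NEP-LKKT} defining $L_{\rm KKT}$ is the linearization of \eqref{NEP-KKT} in the variables $(x,\lambda)$ about $(\bar p,\bar x,\bar\lambda)$ with the parameter $w$ held fixed at $\bar w$ throughout; hence $L_{\rm KKT}$ is simultaneously the linearized KKT mapping associated with $S_{\rm KKT}$ at $\bar p$ for $(\bar x,\bar\lambda)$ and the linearized KKT mapping associated with the tilt-perturbed mapping $\T$ at $(\bar u,\bar v)$ for $(\bar x,\bar\lambda)$. Lemma \ref{lem-iso} --- which is \citet[Corollary 2.3]{dontchev2020characterizations} specialized to the KKT generalized equation --- then gives on one side that $S_{\rm KKT}$ is isolated calm at $\bar p$ for $(\bar x,\bar\lambda)$ if and only if there is a neighborhood $V\times W$ of $(\bar x,\bar\lambda)$ with $L_{\rm KKT}(\bar u,\bar v)\cap(V\times W)=\{(\bar x,\bar\lambda)\}$; applying the same result to $\T$ --- legitimate because passing from canonical to tilt perturbations only removes the smooth datum $w$, so the hypotheses of that corollary still hold --- gives that $\T$ is isolated calm at $(\bar u,\bar v)$ for $(\bar x,\bar\lambda)$ if and only if the very same condition on $L_{\rm KKT}(\bar u,\bar v)$ holds. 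Chaining the two equivalences yields $i)\Leftrightarrow ii)$, and in fact reproves $i)\Rightarrow ii)$ as well.

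I do not expect a genuine obstacle here; the only point requiring care is to make explicit that the criterion in Lemma \ref{lem-iso}(ii) is a statement purely about $L_{\rm KKT}(\bar u,\bar v)$ and refers neither to $w$ nor to the manner in which $w$ enters $f^k$ and $g^k$ --- which is precisely why the localization around $\bar p$ and the localization around $(\bar u,\bar v)$ collapse to the same linear condition. Once this is spelled out, the corollary is a one-line consequence, and the proof that appears in the paper can legitimately be abbreviated to ``immediate from Lemma \ref{lem-iso}'', in parallel with the proofs of Corollaries \ref{coro-strong-til} and \ref{coro-dif-til}.
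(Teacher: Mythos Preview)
Your proposal is correct and follows essentially the same route as the paper: the paper's proof is the single line ``This is an immediate result from Lemma \ref{lem-iso}'', and your argument is precisely an unpacking of why that lemma applies simultaneously to $S_{\rm KKT}$ and to $\T$, namely because the linearization $L_{\rm KKT}$ --- and hence the isolation criterion in Lemma \ref{lem-iso}(ii) --- is the same for both mappings. Your explicit direct verification of $i)\Rightarrow ii)$ from the definition is fine but, as you note yourself, redundant once the two-sided reduction to $L_{\rm KKT}(\bar u,\bar v)$ is in place.
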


\begin{proof}{Proof.}
    This is an immediate result from Lemma \ref{lem-iso}. \Halmos
\end{proof}

The robustness of $S_{\rm KKT}$ is relatively challenging to acquire. As to some extent, robustness needs the investigation of the existence of NEs in a neighborhood of $\bar{x}$. In NLP, however, things become straightforward. For an isolated local minimizer, a perturbed local minimizer certainly exists, and it can be arbitrarily approximated to the initial minimizer before perturbation (\citet[Lemma 2.5]{dontchev2020characterizations}). However, this does not hold for an NEP. Consider the following example.

\begin{example}\label{robustness}
    \begin{equation}
        \begin{array}{lll}
       & {\rm Player}\ 1\quad
        \left\{\begin{array}{ll}
          \min\limits_x & \frac{1}{3}x^3-2xy+x+\varepsilon x\\
          {\rm s.\ t.} & x\le 1,
        \end{array}\right.\qquad \qquad
       & {\rm Player}\ 2\quad
        \begin{array}{ll}
          \min\limits_y & \frac{1}{2}y^2-xy.
        \end{array}
        \end{array}
        \end{equation}
\end{example}

When $\varepsilon =0$, it is clear that $(\bar{x},\bar{y})= (1,1)$ is an isolated LNE with $(\bar{x},\bar{y},\bar{\lambda})=(1,1,0)$ being an isolated solution to the KKT system
\begin{equation}
    \left\{\begin{array}{ll}
        x^2 - 2y +1 +\lambda= 0,\\
        0\le \lambda \perp 1-x \ge 0,\\
        y - x = 0.\\
    \end{array}\right.
\end{equation}
However, for any $\varepsilon>0$, the generalized equation system
\begin{equation}
    \left\{\begin{array}{ll}
        x^2 - 2y +1 = -\varepsilon-\lambda,\\
        0\le \lambda \perp 1-x \ge 0,\\
        y - x = 0\\
    \end{array}\right.
\end{equation}
has no solutions, which implies that the NE fails to exist. Consequently, we need additional assumptions to obtain the existence. 

\begin{definition}\label{def-conv}
    We say that Problem (\ref{NEP_perturbations}) satisfies the convex assumptions if 
    \begin{enumerate}[i)]
        \item $f^k  (\cdot,x^{-k};w^k)$ is  convex for any fixed $w^k$ and $x^{-k}$, $k=1,...,N$;
		\item $g^k_i  (\cdot;w^k)$ is an  affine function for any fixed $w^k$, $i=1,...,s_k$, and $k=1,...,N$;
		\item $g^k_j  (\cdot; w^k)$ is  convex for any fixed $w^k$, $j=s_k+1,...,m_k$, and $k=1,...,N$.
    \end{enumerate}
\end{definition}

\indent It is clear that the convex assumptions are not strict in NEPs. The proof of existence of NEs and the stability necessitate the convex assumptions or even compactness (\citet{facchinei2003finite,nash2024non,palomar2010convex}). Let $\mathcal{F}^k (p^k)$ denote the feasible region of each player's strategy at the perturbation $p^k$, and let $\mathcal{F}(p): = \mathcal{F}^1(p^1)\times \cdots \times  \mathcal{F}^N(p^N)$ denote the consolidation of all feasible regions. Recall that the mapping $E(p)$ denotes the NEs of Problem (\ref{NEP_perturbations}) at $p$. If $(\bar{x},\bar{\lambda})\in S_{\rm KKT}(\bar{p})$ is an isolated point, let $V_1 \times W_1$ be the neighborhood such that $S_{\rm KKT}(\bar{p})\cap V_1\times W_1 = \left\{(\bar{x},\bar{\lambda})\right\}$. Since $(\bu,\bv)$ is a fixed perturbation parameter, and any such fixed value does not affect the discussions of the stability properties, for the sake of simplicity, we may assume that $(\bu,\bv)=(0,0)$. The P-property implies that for all $y\in K(I_1,I_2)$ and $\Vert y \Vert = 1$, there exists a lower bound $\tau>0$ such that 
$$
\max\limits_{k=1,...,N}
\sum\limits_{i\neq k}^N (y^k)^{\mathrm{T}} \nabla^2_{x^kx^i}L^k y^i + \frac{1}{2}(y^k)^{\mathrm{T}} \nabla^2_{x^k x^k} L^k y^k> \tau>0 
$$
from the continuity.  

Now we can establish the following proposition.

\begin{proposition}\label{prop-deg-P}
    Let $(\Bar{x}^1,\Bar{x}^2,...,\Bar{x}^N, \Bar{\lambda}^1,\Bar{\lambda}^2,...,\Bar{\lambda}^N)\in S_{\rm KKT}(\bar{p})$. Suppose the following requirements are fulfilled:
        \begin{enumerate}[i)]
            \item Problem (\ref{NEP_perturbations}) satisfies the convex assumptions;
            \item The SMFCQ holds at $(\bar{p}^k,\bar{x}^k,\bar{\lambda}^k)$ for each player $k$;
            \item Problem (\ref{NEP_perturbations}) satisfies {\rm the P-property} at $(\bar{p},\bar{x},\bar{\lambda})$ on $K(I_1,I_2)$.
        \end{enumerate}
         Then there exists a neighborhood $V\subset V_1$ and $\bar{x}\in V$ such that 
    $$
    {\rm deg}\, (H_1(\cdot), V, 0)=1,
    $$
    where $H_t(z): [0,1]\times \mathbb{R}^n \rightarrow \mathbb{R}^n$,
    $$
    H_t(z) = z-  \argmin\limits_{x\in \mathcal{F}(\bar{p})} \left[ t \sum\limits_{k=1}^N  f^k(x^k,z^{-k}; \bar{w}^k) + \tau\Vert x-tz-(1-t)\bar{x}\Vert^2   \right].
    $$
\end{proposition}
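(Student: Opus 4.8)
The plan is to realize $H_t$ as an admissible homotopy between $z\mapsto z-\bx$ and $H_1$ and apply the homotopy invariance of the topological degree (Lemma~\ref{lem-deg}). Under the hypotheses, Remark~\ref{rmk-I-P-pro}(i) shows the P-property implies the I-property, so Theorem~\ref{thm-iso} guarantees that $(\bx,\blam)$ is an isolated point of $\S(\bp)$ and $V_1$ is well defined; asking ${\rm cl}\,V\subset V_1$ then only restricts the radius of the ball used for $V$. I record two facts. (a) Under the convex assumptions $\mathcal F(\bp)=\prod_k\mathcal F^k(\bp^k)$ is nonempty (it contains the KKT point $\bx$), closed and convex, and the inner objective is jointly continuous in $(t,z,x)$ and, for each $(t,z)$, strongly convex in $x$ with modulus $2\tau$; hence its minimizer over $\mathcal F(\bp)$ is unique and depends continuously on $(t,z)$ (Berge's maximum theorem), so $H:[0,1]\times\R^n\to\R^n$ is continuous and $H_0(z)=z-\Pi_{\mathcal F(\bp)}(\bx)=z-\bx$. (b) Under the convex assumptions $\bx$ is a global minimizer for each player, hence $-\nabla_{x^k}f^k(\bx;\bw^k)\in N_{\mathcal F^k(\bp^k)}(\bx^k)$; since $N$ is a cone, scaling the stationarity by $t\ge0$ shows $\bx$ solves the inner problem when $z=\bx$ for every $t$, i.e.\ $H_t(\bx)=0$ for all $t\in[0,1]$. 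Consequently ${\rm deg}(H_0(\cdot),V,0)=1$ for any bounded open $V\ni\bx$ by Lemma~\ref{lem-deg}(i).

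It thus suffices to produce $\rho>0$ with ${\rm cl}\,V\subset V_1$, where $V:=\{z\in\R^n:\|z-\bx\|<\rho\}$, such that $\bx$ is the only zero of $H_t$ in ${\rm cl}\,V$ over all $t\in[0,1]$; then $0\notin\bigcup_{t\in[0,1]}H_t({\rm bd}\,V)$ (as $\bx\notin{\rm bd}\,V$), and Lemma~\ref{lem-deg}(iii) gives ${\rm deg}(H_1(\cdot),V,0)={\rm deg}(H_0(\cdot),V,0)=1$. I would prove this uniqueness claim by contradiction: if it fails there are $t_m\to t^\ast\in[0,1]$ and $z_m\to\bx$, $z_m\ne\bx$, with $H_{t_m}(z_m)=0$, and by separability the optimality condition decouples into, for each $k$,
$$
0\in t_m\nabla_{x^k}f^k(z_m^k,z_m^{-k};\bw^k)+2\tau(1-t_m)(z_m^k-\bx^k)+N_{\mathcal F^k(\bp^k)}(z_m^k).
$$
If $t^\ast=0$, rewrite this as $z_m^k=\Pi_{\mathcal F^k(\bp^k)}\!\big(\bx^k-\eps_m\nabla_{x^k}f^k(z_m^k,z_m^{-k})\big)$ with $\eps_m=\tfrac{t_m}{2\tau(1-t_m)}$; since $-\nabla_{x^k}f^k(\bx)\in N_{\mathcal F^k(\bp^k)}(\bx^k)$ we have $\Pi_{\mathcal F^k(\bp^k)}\!\big(\bx^k-\eps_m\nabla_{x^k}f^k(\bx)\big)=\bx^k$, so nonexpansiveness of the projection together with local Lipschitzness of $\nabla_{x^k}f^k$ gives $\|z_m^k-\bx^k\|\le\eps_m L\|z_m-\bx\|$; summing over $k$, $\|z_m-\bx\|^2\le NL^2\eps_m^2\|z_m-\bx\|^2$, forcing $z_m=\bx$ for large $m$, a contradiction.

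The essential case is $t^\ast>0$, where the P-property enters. Put $\hat y^k:=\lim_m(z_m^k-\bx^k)/\|z_m-\bx\|$ along a subsequence, so $\hat y=(\hat y^1,\dots,\hat y^N)$, $\|\hat y\|=1$. Using $z_m\in\mathcal F(\bp)$, affineness of the equality constraints, and the SMFCQ — which via uniqueness of the multipliers forces the multiplier of each strongly active constraint $i\in I^k_1$ to converge to $t^\ast\bar\lambda^k_i>0$, so that constraint stays active along $z_m$ — a first-order Taylor expansion shows $\hat y\in K(I_1,I_2)$. Then testing the optimality condition against $z_m^k-\bx^k$, inserting $\nabla_{x^k}f^k(\bx)=-\sum_{i\in I^k_1}\bar\lambda^k_i\nabla g^k_i(\bx^k)$, and expanding $\nabla_{x^k}f^k$ and the active $g^k_i$ to second order (the activity $g^k_i(z_m^k)=0$ makes $\nabla g^k_i(\bx^k)^{\mathrm T}(z_m^k-\bx^k)$ of second order, which folds the constraint curvature into the Hessian of $L^k$), after dividing by $\|z_m-\bx\|^2$ and passing to the limit one obtains, for each $k$,
$$
t^\ast\Big[\textstyle\sum_{i\ne k}(\hat y^k)^{\mathrm T}\nabla^2_{x^kx^i}L^k\,\hat y^i+(\hat y^k)^{\mathrm T}\nabla^2_{x^kx^k}L^k\,\hat y^k\Big]+2\tau(1-t^\ast)\|\hat y^k\|^2=0 .
$$
Since $t^\ast>0$ and $\nabla^2_{x^kx^k}L^k=\nabla^2_{x^kx^k}f^k+\sum_{i\in I^k_1}\bar\lambda^k_i\nabla^2_{x^kx^k}g^k_i\succeq0$ under the convex assumptions, subtracting the nonnegative quantity $\tfrac12(\hat y^k)^{\mathrm T}\nabla^2_{x^kx^k}L^k\hat y^k$ yields
$$
\textstyle\sum_{i\ne k}(\hat y^k)^{\mathrm T}\nabla^2_{x^kx^i}L^k\,\hat y^i+\tfrac12(\hat y^k)^{\mathrm T}\nabla^2_{x^kx^k}L^k\,\hat y^k\le0\qquad\text{for all }k,
$$
so $\max_k(\cdot)\le0$ at the nonzero $\hat y\in K(I_1,I_2)$, contradicting the P-property (Definition~\ref{def-P-pro}). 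This proves the uniqueness claim, and the degree computation follows.

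The hard part is the $t^\ast>0$ case: verifying $\hat y\in K(I_1,I_2)$, the SMFCQ-based stability of the strongly active set, and the second-order bookkeeping that produces exactly the displayed identity — in particular the precise $\tfrac12$-coefficient, which is calibrated so that the conclusion is precisely the negation of the P-property and reduces to the SOSC when $N=1$. Continuity of $H_t$, the identity $H_0=I-\bx$, and the small-$t$ estimate are comparatively routine.
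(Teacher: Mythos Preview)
Your overall strategy coincides with the paper's: both arguments show $H_t(\bx)=0$, identify $H_0=I-\bx$, and then use homotopy invariance of the degree, proving the admissibility of the homotopy by contradiction via a sequence $(t_m,z_m)\to(t^\ast,\bx)$, extracting a unit limit direction $\hat y$, showing $\hat y\in K(I_1,I_2)$, and deriving from a second-order expansion the negation of the P-property. So the route is the same; the differences are in bookkeeping.

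Two points deserve correction or comparison. First, your displayed relation in the $t^\ast>0$ case should be an inequality, not an equality: when you test the optimality inclusion against $z_m^k-\bx^k$, the normal-cone term contributes $\langle\xi^k,\bx^k-z_m^k\rangle\le0$, which is not generally zero. Moreover, the diagonal coefficient you obtain is $\nabla^2_{x^kx^k}f^k+\tfrac12\sum_i\bar\lambda^k_i\nabla^2 g^k_i$, not $\nabla^2_{x^kx^k}L^k$; your ``activity'' substitution only recovers \emph{half} of the constraint curvature. This does not break the argument, because under the convex assumptions that diagonal term still dominates $\tfrac12(\hat y^k)^{\mathrm T}\nabla^2_{x^kx^k}L^k\hat y^k$, and you then discard the nonnegative excess anyway; but the intermediate display is not correct as written. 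Second, the paper avoids both of these subtleties by working with \emph{function values} of the Lagrangian rather than gradients: it compares the inner objective at $z_l^k$ and at $\bx^k$ and adds the complementarity inequalities $\bar\lambda^k_i g^k_i(z_l^k)\le0=\bar\lambda^k_i g^k_i(\bx^k)$ to obtain directly $t_l\big(L^k(z_l^k,z_l^{-k},\bar\lambda^k)-L^k(\bx^k,z_l^{-k},\bar\lambda^k)\big)+\tau(1-2t_l)\|z_l^k-\bx^k\|^2\le0$, whose Taylor expansion immediately yields the $\tfrac12\nabla^2_{x^kx^k}L^k$ coefficient without any claim that strongly active constraints stay active. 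Relatedly, the paper's proof that $\hat y\in K(I_1,I_2)$ does not use multiplier convergence at all: it combines $(\bx^k-z_l^k)^{\mathrm T}\nabla_{x^k}f^k(z_l)\ge0$ (from minimality of $z_l$) with $(z_l^k-\bx^k)^{\mathrm T}\nabla_{x^k}f^k(\bx)\ge0$ (from $\bx$ being an NE) to get $\nabla_{x^k}f^k(\bx)^{\mathrm T}\hat y^k=0$, and then convexity of the active $g^k_i$ gives the cone membership. Your SMFCQ-based stability of the strongly active set is correct, but it is extra machinery the paper does not need. Your separate treatment of the case $t^\ast=0$ via projection nonexpansiveness is a clean addition; the paper handles all $t_l\in(0,1)$ uniformly by dividing its key inequality by $t_l\|z_l-\bx\|^2$ and using the specific choice of $\tau$ as the strict lower bound coming from the P-property.
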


\begin{proof}{Proof.}
    We begin by demonstrating that $\bar{x}$ is an isolated NE. Since the convex assumptions hold, for any $0 \neq y\in K(I_1, I_2)$, the P-property implies that 
    $$
        \max\limits_{k=1,...,N} \sum\limits_{i=1}^N (y^k)^{\mathrm{T}} \nabla^2_{x^k x^i} L^k y^i \ge 
        \max\limits_{k=1,...,N}
        \sum\limits_{i\neq k}^N (y^k)^{\mathrm{T}} \nabla^2_{x^kx^i}L^k y^i + \frac{1}{2}(y^k)^{\mathrm{T}} \nabla^2_{x^k x^k} L^k y^k > 0.
    $$
    Thus the I-property holds, and Theorem \ref{thm-iso}  indicates that $(\bar{x},\bar{\lambda})$ is isolated in the set $S_{\rm KKT}(\bar{p})$. Assume for contradiction that $\bar{x}$ is not an isolated NE. Then there exists a sequence $\{x_l\}$ with $x_l\in E(\bar{p}) \rightarrow \bar{x}$ when $l\rightarrow \infty$. 
    Since the SMFCQ holds at $(\bar{p}^k,\bar{x}^k,\bar{\lambda}^k)$ for each player $k$, \citet[Theorem 2.3]{robinson2009generalized} implies that the Lagrange multiplier sequence $\{\lambda_l\}$ associated with $\{x_l\}$ exists and is uniformly bounded for sufficiently large $l$.   Thus $x_l\in X_{\rm KKT}(\bar{p})$ if and only if $x_l \in E(\bar{p})$. Recall that the SMFCQ implies that the Lagrange multiplier $\lambda^k$ associated with $\bar{x}^k$ is unique for $k=1,...,N$ (see \citet{kyparisis1985uniqueness}). If there exists $M>0$ such that $\Vert \lambda_l \Vert\le M$ when $l$ is sufficiently large, let $\hat{\lambda}$ be a limit, and then $(\bar{x},\hat{\lambda})\in S_{\rm KKT}(\bar{p})$. The SMFCQ implies that $\hat{\lambda} = \bar{\lambda}$, which is a contradiction to the isolation of $(\bar{x},\bar{\lambda})$. 
    Consequently, $\bar{x}$ is an isolated NE.
    
    It follows from \citet[Proposition 12.5]{palomar2010convex} that $\bar{x}\in E(\bar{p})$ if and only if $H_1(\bar{x}) = 0$.  Berge's theorem implies that $H_t(z)\in C([0,1]\times \mathbb{R}^n)$. We now demonstrate that there exists a neighborhood $V$ with $\bar{x}\in V\subset V_1$ such that ${\rm deg}\, (H_1(\cdot), V, 0) = 1$. Assume for contradiction that this is false. Then there exist sequences $\left\{ t_l \right\}$ with $t_l \in [0,1]$ and  $\left\{ z_l \right\}$ with $ z_l\in\mathbb{R}^n$ such that $\bar{x} \neq z_l \rightarrow \bar{x}$ and $H_{t_l} (z_l)= 0$.  Moreover, $0<t_l<1$ when $l$ is sufficiently large. Without loss of generality, we assume that $t_l\in (0,1)$ for all $l$. Then 
    $$
        z_l = \argmin\limits_{x\in \mathcal{F}(\bar{p})} \left[ t_l \sum\limits_{k=1}^N  f^k(x^k,z_l^{-k}; \bar{w}^k) + \tau\Vert x-t_lz_l-(1-t_l)\bar{x}\Vert^2   \right],
    $$
    which implies that for all $x\in \mathcal{F}(\bar{p})$,
    $$
        (x^k-z_l^k)^{\mathrm{T}} \left[t_l \nabla_{x^k}f^k (z^k_l, z_l^{-k};\bar{w}^k)+ 2 \tau (1-t_l)(z_l^k-\bar{x}^k) \right]\ge 0.
    $$
    This yields 
    \begin{equation}\label{con-inq}
        (\bar{x}^k-z_l^k)^{\mathrm{T}} \nabla_{x^k}f^k (z_l^k, z_l^{-k};\bar{w}^k)\ge 0.
    \end{equation}
    Since $\bar{x}$ is an NE, 
    \begin{equation}\label{mini-inq}
        (z_l^k-\bar{x}^k)^{\mathrm{T}} \nabla_{x^k}f^k (\bar{x}^k, \bar{x}^{-k};\bar{w}^k) \ge 0.
    \end{equation}
    Dividing (\ref{con-inq}) and (\ref{mini-inq}) by $\Vert z_l^k - \bar{x}^k \Vert$ and passing $l\rightarrow \infty$, there exists 
    $$
        y : = \lim\limits_{l\rightarrow \infty} \frac{z_l -\bar{x}}{\Vert z_l -\bar{x} \Vert}
    $$
    such that $\nabla_{x^k}f^k (\bar{x}^k, \bar{x}^{-k};\bar{w}^k)^{\mathrm{T}} y^{ k} = 0$, which with the convex assumptions implies that $y\in K(I_1, I_2)$. Moreover, 
    \begin{equation}\label{f>=f}
        t_l f^k (\bar{x}^k,z^{-k}_l;\bar{w}^k) + \tau t_l \Vert z^{k}_l -\bar{x}^k \Vert^2 \ge t_l f^k (z^k_l,z^{-k}_l;\bar{w}^k) + \tau(1-t_l) \Vert z^{k}_l -\bar{x}^k \Vert^2,
    \end{equation}
    and 
    \begin{equation}\label{mutip}
        t_l \bar{\lambda}^k_i g^k_i (\bar{x}^k;\bar{w}^k) = 0 \ge 
        t_l \bar{\lambda}^k_i g^k_i(z^k_l ; \bar{w}^k) \quad {\rm for}\  i=1,...,m_k,\ k=1,...,N.
    \end{equation}
    Thus, combining (\ref{f>=f}) and (\ref{mutip}),
    \begin{equation}\label{L^k}
        \begin{array}{ll}
            & t_l \left( L^k(z^k_l,z^{-k}_{l},\bar{\lambda}^k;\bar{w}^k) - L^k(\bar{x}^k,z^{-k}_{l},\bar{\lambda}^k;\bar{w}^k) \right)+\tau (1-2t_l) \Vert z^k_l - \bar{x}^k \Vert^2\\
            = & t_l \nabla_{x^k} L^k(\bar{x}^k,z^{-k}_{l},\bar{\lambda}^k;\bar{w}^k)^{\mathrm{T}} (z^k_l - \bar{x}^k)  + \frac{1}{2} t_l (z^k_l - \bar{x}^k)^{\mathrm{T}} \nabla_{x^k x^k}L^k (\bar{x}^k,z^{-k}_l,\bar{\lambda}^k; \bar{w}^k) (z^k_l - \bar{x}^k)\\
            & +  \tau (1-2t_l) \Vert z^k_l - \bar{x}^k \Vert^2+ o(t_l \Vert z^k_l - \bar{x}^k \Vert^2)\\
            \le & 0.
        \end{array}
    \end{equation}
    Moreover,
    \begin{equation}\label{nabla L^k}
        t_l \nabla_{x^k} L^k (\bar{x}^k,\bar{x}^{-k},\bar{\lambda}^k;\bar{w}^k) = 0.
    \end{equation}
    From (\ref{L^k}) and (\ref{nabla L^k}), 
    \begin{equation}\label{t_l}
        \begin{array}{ll}
            &t_l \left( \nabla_{x^k} L^k(\bar{x}^k,z^{-k}_{l},\bar{\lambda}^k;\bar{w}^k) - \nabla_{x^k} L^k (\bar{x}^k,\bar{x}^{-k},\bar{\lambda}^k;\bar{w}^k) \right)^{\mathrm{T}} (z^k_l - \bar{x}^k)+ o( t_l\Vert z^k_l - \bar{x}^k \Vert^2)\\
            & + \frac{1}{2} t_l (z^k_l - \bar{x}^k)^{\mathrm{T}} \nabla_{x^k x^k}L^k (\bar{x}^k,z^{-k}_l,\bar{\lambda}^k; \bar{w}^k) (z^k_l - \bar{x}^k) +  \tau (1-2t_l) \Vert z^k_l - \bar{x}^k \Vert^2\\
            = & t_l \left( \sum\limits_{i\neq k}^N (z^k_l -\bar{x}^k)^{\mathrm{T}}\nabla^2_{x^k x^i}L^k (z^i_l - \bar{x}^i) + \frac{1}{2} (z^k_l -\bar{x}^k)^{\mathrm{T}}\nabla^2_{x^k x^k} L^k  (\bar{x}^k,z^{-k}_l,\bar{\lambda}^k; \bar{w}^k) (z^k_l - \bar{x}^k) \right) \\
            & +  \tau (1-2t_l) \Vert z^k_l - \bar{x}^k \Vert^2+ o( t_l\Vert z^k_l - \bar{x}^k \Vert^2)\\
            \le & 0.
        \end{array}
    \end{equation}
    Dividing (\ref{t_l}) by $t_l\Vert z_l - \bar{x} \Vert^2$ and passing $l\rightarrow \infty$, then we obtain the limit $y\in K(I_1,I_2)$ with $\Vert y \Vert =1$ such that
    $$
    \max\limits_{k=1,...,N}
    \sum\limits_{i\neq k}^N (y^k)^{\mathrm{T}} \nabla^2_{x^kx^i}L^k(\bar{x}^k,\bar{x}^{-k},\bar{\lambda}^k;\bar{w}^k) y^i + \frac{1}{2}(y^k)^{\mathrm{T}} \nabla^2_{x^k x^k} L^k (\bar{x}^k,\bar{x}^{-k},\bar{\lambda}^k;\bar{w}^k) y^k- \tau \le 0,
    $$
    which contradicts {\rm the P-property} at $(\bar{p},\bar{x},\bar{\lambda})$ on $K(I_1,I_2)$. Thus there exists a neighborhood $V\subset V_1$ such that ${\rm deg}\, (H_1(\cdot), V, 0) = 1$. \Halmos 

\end{proof}

By Proposition \ref{prop-deg-P}, we obtain the following theorem, which provides an exact characterization of the robust isolated calmness of $S_{\rm KKT}$.

\begin{theorem}\label{thm-robust}
    Let $(\Bar{x}^1,\Bar{x}^2,...,\Bar{x}^N, \Bar{\lambda}^1,\Bar{\lambda}^2,...,\Bar{\lambda}^N)\in S_{\rm KKT}(\bar{p})$. Suppose the following requirements are fulfilled: 
        \begin{enumerate}[i)]
            \item Problem (\ref{NEP_perturbations}) satisfies the convex assumptions;
            \item The SMFCQ holds at $(\bar{p}^k,\bar{x}^k,\bar{\lambda}^k)$ for each player $k$;
            \item Problem (\ref{NEP_perturbations}) satisfies {\rm the P-property} at $(\bar{p},\bar{x},\bar{\lambda})$ on $K(I_1,I_2)$.
        \end{enumerate}
        Then $S_{\rm KKT}$ is robust isolated calm at $\bar{p}$ for $(\bar{x},\bar{\lambda})$, and for any $(p,x(p),\lambda(p))\in {\rm gph}\, S_{\rm KKT}$ around $(\bar{p},\bar{x},\bar{\lambda})$, $x(p)$ is an NE of Problem (\ref{NEP_perturbations}).
\end{theorem}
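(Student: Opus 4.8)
The plan is to separate the statement into its two genuinely distinct assertions: (a) the isolated Lipschitz estimate $S_{\rm KKT}(p)\cap V\times W\subset\{(\bar x,\bar\lambda)\}+\kappa\|p-\bar p\|\mathbb B$, and (b) the nonemptiness $S_{\rm KKT}(p)\cap V\times W\neq\emptyset$ for every $p$ near $\bar p$; robust isolated calmness is exactly (a)+(b). Part (a) I would get for free from the earlier development: under the convex assumptions each $\bar x^k$ is a global, hence local, minimizer of $\mathbf{P}^k(\bar x^{-k},\bar p^k)$ with multiplier $\bar\lambda^k$, so Remark~\ref{rmk-I-P-pro}~i) promotes the P-property to the I-property on $K(I_1,I_2)$; together with the SMFCQ, Theorem~\ref{thm-iso} then gives that $S_{\rm KKT}$ is isolated calm at $\bar p$ for $(\bar x,\bar\lambda)$, so such $V\times W$ and $\kappa$ exist and we may assume $S_{\rm KKT}(\bar p)\cap V\times W=\{(\bar x,\bar\lambda)\}$.

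For part (b), which is the crux, I would run the degree argument of Proposition~\ref{prop-deg-P} but now let the parameter move. For $p=(u,v,w)$ near $\bar p$ define the perturbed fixed-point map
$$H^p(z):=z-\argmin_{x\in\mathcal F(p)}\Big[\sum_{k=1}^N f^k(x^k,z^{-k};w^k)+\tau\|x-z\|^2\Big],$$
so that $H^{\bar p}$ coincides with the map $H_1(\cdot)$ of Proposition~\ref{prop-deg-P}. The quadratic regularization makes the minimand coercive, so the $\argmin$ is a well-defined singleton; and since MFCQ (a consequence of SMFCQ) is stable under perturbations while the convex assumptions keep the feasible-set mapping $p\mapsto\mathcal F(p)$ continuous near $\bar p$, a Berge-type argument gives joint continuity of $(p,z)\mapsto H^p(z)$. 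Because $\bar x$ is an isolated NE (shown inside the proof of Proposition~\ref{prop-deg-P}), $H^{\bar p}$ does not vanish on ${\rm bd}\,V$, hence $\inf_{z\in H^{\bar p}({\rm bd}\,V)}\|z\|_\infty>0$; so for $p$ in a small enough neighborhood $U$ of $\bar p$ the uniform deviation $\max_{x\in{\rm cl}\,V}\|H^{\bar p}(x)-H^p(x)\|_\infty$ drops below that infimum, and Lemma~\ref{lem-deg}~iv) yields ${\rm deg}(H^p(\cdot),V,0)={\rm deg}(H^{\bar p}(\cdot),V,0)=1$. By Lemma~\ref{lem-deg}~ii) there is then $x(p)\in V$ with $H^p(x(p))=0$, i.e.\ $x(p)\in E(p)$ by \citet[Proposition~12.5]{palomar2010convex}.

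It remains to upgrade $x(p)$ to a KKT pair inside $V\times W$. Since MFCQ holds at $(\bar p^k,\bar x^k)$ and is stable, it persists at $(p^k,x^k(p))$, so each $x^k(p)$ being a best response forces $x(p)\in X_{\rm KKT}(p)$ with some multiplier $\lambda(p)$, and \citet[Theorem~2.3]{robinson2009generalized} lets me choose $\lambda(p)$ uniformly bounded over $p\in U$; any accumulation point $\hat\lambda$ of $\lambda(p)$ as $p\to\bar p$ satisfies $(\bar x,\hat\lambda)\in S_{\rm KKT}(\bar p)\cap V\times W=\{(\bar x,\bar\lambda)\}$, so $\hat\lambda=\bar\lambda$ and $\lambda(p)\to\bar\lambda$. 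Shrinking $U$, we obtain $(x(p),\lambda(p))\in S_{\rm KKT}(p)\cap V\times W$ for every $p\in U$, which together with (a) is robust isolated calmness; and since by the convex assumptions every element of $X_{\rm KKT}(p)$ is a best response for each player, any such $x(p)$ is an NE of (\ref{NEP_perturbations}).

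I expect the main obstacle to be the existence part (b): in NLP a perturbed isolated minimizer automatically survives (\citet[Lemma~2.5]{dontchev2020characterizations}), but for NEPs it need not, cf.\ Example~\ref{robustness}. Within (b) the two delicate points are verifying that $H^p$ depends continuously on $p$ uniformly over ${\rm cl}\,V$ — this is where coercivity of the regularized objective, stability of MFCQ, and continuity of the convex feasible-set mapping all have to be checked so that Berge's theorem applies — and arranging the neighborhood $V$ so that $H^{\bar p}$ has no zero on its boundary, which is what makes the degree-perturbation bound of Lemma~\ref{lem-deg}~iv) usable. The convergence $\lambda(p)\to\bar\lambda$ via uniqueness of multipliers under SMFCQ is the last ingredient needed to land the full KKT pair in the isolated-calmness neighborhood.
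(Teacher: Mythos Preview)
Your proposal is essentially the paper's own proof: split into isolated calmness (via the P$\Rightarrow$I implication and Theorem~\ref{thm-iso}) plus existence of nearby equilibria by transferring the degree of Proposition~\ref{prop-deg-P} to the perturbed fixed-point map through Lemma~\ref{lem-deg}~iv), and then recover the multipliers via SMFCQ-based uniqueness. Two small slips to fix: your $H^p$ must include the tilt term $-\langle v,x\rangle$ (otherwise its zeros are not NEs of the perturbed problem~(\ref{NEP_perturbations}) when $v\neq\bar v$), and before writing $(\bar x,\hat\lambda)\in S_{\rm KKT}(\bar p)$ you need the intermediate step that any accumulation point of $x(p)$ lies in $E(\bar p)\cap{\rm cl}\,V=\{\bar x\}$, which is exactly how the paper argues it.
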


\begin{proof}{Proof.}
    Consider the following mapping 
    $$
    \hat{H}(z;p) = z-\argmin\limits_{x\in \mathcal{F}(p)} \left[  \sum\limits_{k=1}^N f^k(x^k,z^{-k}; w^k)-\left<v,x\right> + \tau\Vert x-z\Vert^2   \right].
    $$
    From Theorem \ref{thm-iso}, there exist neighborhoods $U_1$ of $\bar{p}$, $V_1\times W_1$ of $(\bar{x},\bar{\lambda})$ and a constant $\kappa \ge 0$ such that 
    $$
        S_{\rm KKT}(p)\cap V_1\times W_1 \subset \left\{(\bar{x},\bar{\lambda})\right\} + \kappa \Vert p-\bar{p}\Vert \mathbb{B} \quad {\rm for}\ {\rm all}\ p\in U_1.
    $$
    Without loss of generality, we may assume that $S_{\rm KKT}(\bar{p})\cap {\rm cl\,}V_1\times {\rm cl\,}W_1 = \left\{(\bar{x},\bar{\lambda})\right\}$ and $E(\bar{p})\cap {\rm cl\,}V_1 = \left\{\bar{x}\right\}$ from Proposition \ref{prop-deg-P}. We begin by proving that there exist neighborhoods $ V\subset V_1$ of $\bar{x}$ and $ U_2\subset U_1$ of $\bar{p}$ such that $E(p)\cap V \neq \emptyset$ for any $p\in U_2$.
    It is clear that $z-\hat{H}(z;p)$ is actually the unique global minimizer of the following convex optimization problem
    $$
        {\rm \textbf{H}(z;p)} \quad \quad
        \begin{array}{ll}
         \min\limits_{x} &  \sum\limits_{k=1}^N  f^k(x^k,z^{-k}; w^k) -\left<v,x\right> + \tau\Vert x-z\Vert^2\\
       {\rm s.\ t.}& g_i^k (x^k;w^k) = u_i^k, \  i=1,...,s_k,\ k=1,...,N,\\
       & g^k_j  (x^k;w^k) \le u_j^k,\ j=s_k+1,...,m_k,\ k=1,...,N.\\
       \end{array}
    $$
    Since the SMFCQ holds at $(\bar{z},\bar{p},\bar{x},\bar{\lambda})= (\bar{x},\bar{p},\bar{x}, \blam)$ for ${\rm \textbf{H}(z;p)}$, it follows from \citet[Lemma 2.5]{dontchev2020characterizations} that $\hat{H}(\cdot;\cdot)$ is lower semi-continuous (as a set-valued mapping) at $(\bar{x}, \bar{p})$ for $0$. Noticing that $\hat{H}(\cdot;\cdot)$ is a single-valued mapping for the sake of strict convexity, then $\hat{H}(\cdot;\cdot)$ is continuous at $(\bar{x}, \bar{p})$. For $(z,p)$ close enough to $(\bar{x},\bar{p})$,
    $$
        \begin{array}{ll}
        &\Vert z-\hat{H}(z;p) - \bar{x}\Vert\\
         = & \Vert z-\bar{x}+(\hat{H}(\bar{x};\bar{p})-\hat{H}(z;p)) \Vert\\
         \le & \Vert z-\bar{x} \Vert + \Vert \hat{H}(\bar{x};\bar{p})-\hat{H}(z;p) \Vert.
        \end{array}
    $$
    Thus, $(z-\hat{H}(z;p), p)$ is also close to $(\bar{x},\bar{p})$. By \citet[Theorem 2.3]{robinson2009generalized} and \citet{kyparisis1985uniqueness}, the MFCQ holds at $(z,p,z-\hat{H}(z;p))$ for $\textbf{H}(z;p)$ when $(z,p)$ is close enough to $(\bar{x},\bar{p})$. Then from \citet[Lemma 2.5]{dontchev2020characterizations}, $\hat{H}(\cdot;\cdot)$ is continuous in a small neighborhood of $(\bar{x},\bar{p})$. Without loss of generality, let $\hat{H}(\cdot ;\cdot)$ be continuous in ${\rm cl\,} V_1 \times {\rm cl\,} U_1$.
      Let $V\subset V_1$ be the neighborhood presented by Proposition \ref{prop-deg-P}. Then there exists a neighborhood $U_2 \subset U_1$ such that $\hat{H}(\cdot ;\cdot)$ is uniformly continuous in ${\rm cl\,} V \times {\rm cl\,} U_2$, and 
    $$
    \max\limits_{z\in {\rm cl\,} V}\Vert \hat{H}(z,\bar{p})-\hat{H}(z,p) \Vert_\infty < \inf\limits_{z\in H_1({\rm bd\,}V)} \Vert z \Vert_\infty \quad {\rm for}\ {\rm all}\ p\in U_2.
    $$
     Following from Lemma \ref{lem-deg},
    $$
    1 = {\rm deg}\, (H_1(\cdot), V, 0) = {\rm deg}\, (\hat{H}(\cdot,p), V ,0) \quad {\rm for}\ {\rm all}\ p\in U_2.
    $$
    Consequently, from Lemma \ref{lem-deg}, $E(p)\cap V \neq \emptyset$ for any $p\in U_2$.

    We now prove that there exist neighborhoods $W\subset W_1$ of $\bar{\lambda}$ and $U\subset U_2$ of $\bar{p}$ such that $S_{\rm KKT}(p)\cap V\times W \neq \emptyset$ for all $p\in U$. Since the SMFCQ holds at $(\bar{p}^k,\bar{x}^k,\bar{\lambda}^k)$, for any $p$ in a small neighborhood of $\bar{p}$, $\lambda(p)$ exists and is uniformly bounded. Assume for contradiction that there exist constant $\alpha>0$ and $p_l\rightarrow \bar{p}$ such that $\Vert \lambda(p_l)-\bar{\lambda} \Vert \ge \alpha $. Let
    $$
        \hat{x}: = \lim\limits_{l\rightarrow \infty} x(p_l),\ \hat{\lambda}: = \lim\limits_{l\rightarrow \infty} \lambda(p_l).
    $$
    Then $(\hat{x},\hat{\lambda})\in  S_{\rm KKT}(\bar{p})$, which implies that $\hat{x}\in E(\bar{p})$. If $\hat{x}= \bar{x}$, the SMFCQ yields that $\bar{\lambda} = \hat{\lambda}$. This is a contradiction as $\Vert \bar{\lambda}-\hat{\lambda}\Vert \ge \alpha$. Thus $\hat{x}\neq \bar{x}$ and $\hat{x}\in {\rm cl\,}V \subset {\rm cl\,}V_1$, which is also a contradiction since $E(\bar{p})\cap {\rm cl\,}V_1 = \left\{ \bar{x}\right\}$. Consequently, there exist neighborhoods $W\subset W_1$ of $\bar{\lambda}$ and $U\subset U_2$ of $\bar{p}$ such that $S_{\rm KKT}(p)\cap V\times W \neq \emptyset$ for any $p\in U$. Then
    $$
        S_{\rm KKT}(p)\cap V\times W \subset S_{\rm KKT}(p)\cap V_1\times W_1 \subset \left\{(\bar{x},\bar{\lambda})\right\} + \kappa \Vert p-\bar{p}\Vert \mathbb{B} \quad {\rm for}\ {\rm all}\ p\in U,
    $$
    which concludes the proof. \Halmos
\end{proof}

\begin{remark}\label{rmk-robust} 
        When $N=1$, i.e., the NEP reduces to a convex nonlinear programming problem, Theorem \ref{thm-robust} reduces to the condition that the SOSC combined with the SMFCQ implies the robust isolated calmness of $S^1_{\rm KKT}$ by Dontchev and Rockafellar for NLP \citep{dontchev1996characterizations}.
\end{remark}

It is remarkable that the robust isolated calmness of $\T$ can also imply the robust isolated calmness of $S_{\rm KKT}$.

\begin{corollary}\label{coro-rob-til}
    Let $ (\Bar{x}^1,\Bar{x}^2,...,\Bar{x}^N, \Bar{\lambda}^1,\Bar{\lambda}^2,...,\Bar{\lambda}^N)\in S_{\rm KKT} (\bar{p})$. The following are equivalent:
    \begin{enumerate}[i)]
        \item $S_{\rm KKT}$ is  robust isolated calm at $\bar{p}$ for $(\bar{x},\bar{\lambda})$;
        \item $\T$ is robust isolated calm at $(\bar{u},\bar{v})$ for $(\bar{x},\bar{\lambda})$.
    \end{enumerate}
\end{corollary}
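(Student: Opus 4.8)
The plan is to prove $i)\Longrightarrow ii)$ by restriction and $ii)\Longrightarrow i)$ by a reparametrization that reduces the perturbation of $w$ to a state-dependent perturbation of $(u,v)$. For $i)\Longrightarrow ii)$, recall $\T(u,v)=S_{\rm KKT}(u,v,\bar{w})$, so $\T$ is literally the restriction of $\S$ to the slice $\{w=\bar{w}\}$; both the inclusion $\S(p)\cap(V\times W)\subset\{(\bar{x},\bar{\lambda})\}+\kappa\|p-\bar{p}\|\mathbb{B}$ and the local nonemptiness of $\S$ are then inherited by $\T$ on a neighborhood of $(\bar{u},\bar{v})$, using that $\|(u,v,\bar{w})-\bar{p}\|=\|(u,v)-(\bar{u},\bar{v})\|$. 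So the entire content lies in $ii)\Longrightarrow i)$.

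For $ii)\Longrightarrow i)$ I would first dispose of the Lipschitz-inclusion half: robust isolated calmness of $\T$ implies isolated calmness of $\T$ at $(\bar{u},\bar{v})$ for $(\bar{x},\bar{\lambda})$, and by Corollary \ref{coro-iso-til} (equivalently Lemma \ref{lem-iso}) this is equivalent to isolated calmness of $\S$ at $\bar{p}$ for $(\bar{x},\bar{\lambda})$; fix once and for all a neighborhood $V\times W$ of $(\bar{x},\bar{\lambda})$ and a constant $\kappa\ge 0$ with $\S(p)\cap(V\times W)\subset\{(\bar{x},\bar{\lambda})\}+\kappa\|p-\bar{p}\|\mathbb{B}$ for $p$ near $\bar{p}$. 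It then remains only to prove the local nonemptiness of $\S$, namely that $\S(p)\cap(V\times W)\neq\emptyset$ for all $p$ in some neighborhood of $\bar{p}$.

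The device for the nonemptiness is to absorb the perturbation of $w$ into $(u,v)$. For $w$ near $\bar{w}$ set $a_w^k(x):=G^k(x^k;w^k)-G^k(x^k;\bar{w}^k)$ and $b_w^k(x,\lambda):=\nabla_{x^k}L^k(x^k,x^{-k},\lambda^k;w^k)-\nabla_{x^k}L^k(x^k,x^{-k},\lambda^k;\bar{w}^k)$; by smoothness of the data these are continuous, vanish at $w=\bar{w}$, and are Lipschitz in $w$ uniformly for $(x,\lambda)$ near $(\bar{x},\bar{\lambda})$. Writing $a_w=(a_w^1,\dots,a_w^N)$ and $b_w=(b_w^1,\dots,b_w^N)$, a termwise comparison of the KKT systems \eqref{NEP-KKT} shows that $(x,\lambda)\in\S(u,v,w)$ if and only if $(x,\lambda)\in\T\bigl(u-a_w(x),\,v-b_w(x,\lambda)\bigr)$; in particular, with $\hat{u}^k(w):=\bar{u}^k+a_w^k(\bar{x})$ and $\hat{v}^k(w):=\bar{v}^k+b_w^k(\bar{x},\bar{\lambda})$ — continuous in $w$ with value $(\bar{u},\bar{v})$ at $\bar{w}$ — one gets $(\bar{x},\bar{\lambda})\in\S(\hat{u}(w),\hat{v}(w),w)$ for every $w$ near $\bar{w}$. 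Consequently, finding a point of $\S(p)\cap(V\times W)$ near $(\bar{x},\bar{\lambda})$ is exactly finding a fixed point of the correspondence $\Gamma_p:(x,\lambda)\mapsto\T\bigl(u-a_w(x),v-b_w(x,\lambda)\bigr)\cap(V\times W)$. Using the isolated-calmness bound for $\T$ together with the smallness of $a_w,b_w$ and of $(u,v)-(\bar{u},\bar{v})$, one checks that for $p$ in a small enough neighborhood of $\bar{p}$ the correspondence $\Gamma_p$ is nonempty-valued and maps some fixed closed ball around $(\bar{x},\bar{\lambda})$ (lying inside $V\times W$) into itself; a fixed-point argument for $\Gamma_p$ then produces $(x,\lambda)\in\S(p)\cap(V\times W)$, which is the missing local nonemptiness, and combined with the isolated-calmness half this shows $\S$ is robust isolated calm at $\bar{p}$ for $(\bar{x},\bar{\lambda})$.

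I expect the fixed-point step to be the main obstacle. Robust isolated calmness of $\T$ is strictly weaker than strong regularity, so it does not supply a continuous single-valued localization of $\T$ near $(\bar{u},\bar{v})$; hence $\Gamma_p$ is genuinely set-valued and Brouwer's theorem (or the degree Lemma \ref{lem-deg}) cannot be applied off the shelf. To close this gap one has to either (i) verify that $\Gamma_p$ is upper semicontinuous with the regularity needed for a set-valued fixed-point theorem — graph-closedness is immediate from local closedness of $\gph\,\T$ and the calmness inclusion, but the value structure has to be controlled — or (ii) exploit extra structure of the KKT map, for instance the local monotone property of $L_{\rm KKT}$ recorded before Lemma \ref{lemma-Aubin}, to replace $\T$ near $(\bar{u},\bar{v})$ by a continuous (approximate) selection and then run the degree argument of Lemma \ref{lem-deg} on the resulting single-valued perturbation of the identity. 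Everything else — the reparametrization identity, the continuity and vanishing of $a_w,b_w$, the self-mapping property of the ball, and the transfer of the isolated-calmness inclusion — is routine bookkeeping using only facts already established in the excerpt.
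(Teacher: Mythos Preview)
Your route is exactly the paper's: restriction for $i)\Rightarrow ii)$, Corollary~\ref{coro-iso-til} for the isolated-calmness half of $ii)\Rightarrow i)$, and then the reparametrization $(v',-u')=(v,-u)+\textbf{L}(x,\lambda;\bar w)-\textbf{L}(x,\lambda;w)$ (your $a_w,b_w$ in block form), the uniform bound $\|(u',v')-(\bar u,\bar v)\|\le C\|p-\bar p\|$ over $(x,\lambda)$ near $(\bar x,\bar\lambda)$, and the robust isolated calmness of $\T$ to produce a point $(x(u',v'),\lambda(u',v'))\in\T(u',v')$ close to $(\bar x,\bar\lambda)$.

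The fixed-point obstacle you isolate is precisely the step the paper does \emph{not} resolve. As written, the paper fixes an arbitrary $(x,\lambda)\in V_1\times W_1$, builds $(u',v')$ from \emph{that} point, extracts a (generally different) solution $(x(u',v'),\lambda(u',v'))\in\T(u',v')$, and then simply asserts ``from \eqref{13} and the definition of $\T$'' that this solution lies in $\S(p)$. But the reparametrization identity only yields $(x,\lambda)\in\S(u,v,w)\Leftrightarrow(x,\lambda)\in\T(u',v')$ when $(u',v')$ is computed at that \emph{same} $(x,\lambda)$; for the extracted point the compensation term $\textbf{L}(\cdot;\bar w)-\textbf{L}(\cdot;w)$ is evaluated at the wrong argument, so the inclusion does not follow unless the two points coincide---which is exactly the fixed-point condition for your $\Gamma_p$. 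The paper supplies no further device to force this coincidence. Your diagnosis is therefore sharper than the paper's own presentation, and the candidate closures you list (a set-valued fixed-point theorem for $\Gamma_p$, or a degree/selection argument exploiting that the shift $(x,\lambda)\mapsto(u',v')$ is Lipschitz in $(x,\lambda)$ with constant $O(\|w-\bar w\|)$) are genuine additional work that the paper leaves unaddressed.
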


\begin{proof}{Proof.}
    It suffices to demonstrate that $ii)\Longrightarrow i) $. Following from Corollary \ref{coro-iso-til}, $S_{\rm KKT}$ is isolated calm at $\bar{p}$ for $(\bar{x},\bar{\lambda})$. Let the constant $\kappa_1\ge 0$, the neighborhoods $U_1$ of $\bar{p}$ and $V_1\times W_1$ of $(\bar{x},\bar{\lambda})$ satisfy 
    $$
        S_{\rm KKT}(p)\cap V_1\times W_1 \subset \left\{(\bar{x},\bar{\lambda})\right\} + \kappa_1 \Vert p-\bar{p} \Vert \mathbb{B} \quad {\rm for}\ {\rm all}\ p\in U_1.
    $$
    Let the constant $\kappa_2 \ge 0$, the neighborhoods $U_2$ of $\bar{p}$ and $V_2\times W_2$ of $(\bar{x},\bar{\lambda})$ satisfy 
    $$
    \T(u,v)\cap V_2\times W_2 \subset \left\{(\bar{x},\bar{\lambda})\right\} + \kappa_2 (\Vert u-\bar{u}\Vert+\Vert v-\bar{v} \Vert )\mathbb{B} \quad {\rm for}\ {\rm all}\ (u,v,\bar{w})\in U_2,
    $$
    and 
    $$
    \T(u,v)\cap V_2\times W_2 \neq \emptyset \quad {\rm for}\ {\rm all}\ (u,v,\bar{w})\in U_2.
    $$
    It suffices to prove that there exist neighborhoods $U\subset U_1$ of $\bar{p}$ and $\kappa \ge \kappa_1$ such that 
    $$
    \emptyset \neq S_{\rm KKT}(p)\cap V_1\times W_1 \subset \left\{(\bar{x},\bar{\lambda})\right\} + \kappa \Vert p-\bar{p} \Vert \mathbb{B}  \quad {\rm for}\ {\rm all}\ p\in U.
    $$
    For any $(x,\lambda)\in V_1\times W_1$ and $(u,v,\bar{w})\in U_2$, let
    \begin{equation}\label{13}
        \begin{pmatrix}
            v^\prime\\
            -u^\prime
        \end{pmatrix}
        = 
        \begin{pmatrix}
            v\\
            -u
        \end{pmatrix}+
        \textbf{L}(x,\lambda;\bar{w})- \textbf{L}(x,\lambda;w).
    \end{equation}
    We choose the neighborhood $U\subset U_1$ of $\bar{p}$ to be sufficiently small such that there exists a constant $C>0$ satisfying
    $$
    \Vert (u^\prime,v^\prime)-(\bar{u},\bar{v}) \Vert \le \Vert u-\bar{u} \Vert + \Vert v-\bar{v}\Vert + \max_{(x,\lambda,p^\prime)\in {\rm cl\,} (V_1\times W_1 \times U)} \Vert \mathcal{J}_w \textbf{L} (x,\lambda;w^\prime) (w-\bar{w}) \Vert \le C \Vert p-\bar{p} \Vert.
    $$
    Then, for sufficiently small $U$, there exists $(x(u^\prime,v^\prime),\lambda(u^\prime,v^\prime))\in \T(u^\prime,v^\prime)$ such that
    $$
        \Vert (x(u^\prime,v^\prime),\lambda(u^\prime,v^\prime)) - (\bar{x},\bar{\lambda})\Vert \le 
        \kappa_2 \Vert (u^\prime,v^\prime)-(\bar{u},\bar{v}) \Vert \le \kappa_2 C \Vert p-\bar{p} \Vert.
    $$
    From (\ref{13}) and the definition of $\T$, we obtain $(x(u^\prime,v^\prime),\lambda(u^\prime,v^\prime))\in S_{\rm KKT}(p)$. Consequently, for sufficiently small $U$, let $\kappa = \max\left\{C\kappa_2,\kappa_1\right\}$, and then 
    $$
    (x(u^\prime,v^\prime),\lambda(u^\prime,v^\prime)) \in S_{\rm KKT} \cap V_1 \times W_1 \subset \left\{(\bar{x},\bar{\lambda}) \right\} + \kappa \Vert p -\bar{p} \Vert \mathbb{B} \quad {\rm for}\ {\rm all}\ p\in U,
    $$
    which concludes the proof. \Halmos
\end{proof}

\section{Player's Problems Being Quadratic Programs}\label{QP}

A crucial class of models is NEPs in which each player solves a QP problem (\citet{facchinei2003finite,young2014handbook}). In these models, each player is subject to individual linear constraints, and their payoff function is quadratic, interdependent on the strategies of other players. Utilizing the characterizations presented in Sections \ref{Strong}, \ref{Diff}, and \ref{Robust}, we conduct detailed stability analysis of a standard QP model in the context of NEP. 
Let $\mathbb{M}_{n\times n}$ be the space of $n\times n$ matrices over the field of real numbers. Consider the following model 

\begin{equation}\label{NEP_QP_perturbations}
    {\rm Player}\ k\quad \quad
    \begin{array}{ll}
     \min\limits_{x^k \in \mathbb{R}^{n_k}} & \frac{1}{2}x^{\mathrm{T}} P^k(w^k) x - \left<c^k(w^k),x\right>-\left< v^k , x^k\right>\\
   {\rm s.\ t.}& a_i^k(w^k)^{\mathrm{T}} x^k = b^k_i + u^k_i, \  i=1,...,s_k,\\
   & a^k_j  (w^k)^{\mathrm{T}}x^k \le b^k_j + u^k_j,\ j=s_k+1,...,m_k,\\
   \end{array}
   \end{equation}
    where $P^k(\cdot): \mathbb{R}^{d_k} \rightarrow \mathbb{M}_{n\times n}$, $c^k(\cdot): \mathbb{R}^{d_k}\rightarrow \mathbb{R}^{n}$ and $a^k_i(\cdot): \mathbb{R}^{d_k}\rightarrow \mathbb{R}^{n_k}$ are continuously differentiable around $\bar{w}^k$ for $k=1,..,N$, $i=1,...,m_k$. In the following discussions, since we only need the values of $P^k(\cdot)$, $c^k (\cdot)$, $a^k_i ( \cdot)$ at $\bw^k$, for simplicity, we define $P^k = P^k(\bw^k)$, $c^k = c^k (\bar{w}^k)$, and $a^k_i = a^k_i (\bar{w}^k)$ for $k=1,..,N$, $i=1,...,m_k$. Moreover, since the stability analysis at any fixed point $(\bu, \bv, \bw)\in \R^{m+n+d}$ is essentially the same as for $(\bu, \bv, \bw) = (0,0,0)$, we directly set $\bp = (\bu, \bv, \bw) = (0,0,0)$ to simplify the presentation. Let $(\Bar{x}^1,\Bar{x}^2,...,\Bar{x}^N, \Bar{\lambda}^1,\Bar{\lambda}^2,...,\Bar{\lambda}^N)\in S_{\rm KKT}(\bar{p})$. For  $k=1,...,N$, we define
$$
   A^k = 
   \begin{pmatrix}
    (a^k_1)^{\mathrm{T}}\\
    \vdots\\
    (a^k_{m_k})^{\mathrm{T}}
   \end{pmatrix}.
$$
The index sets \eqref{def-index-NEP} associated with Problem \eqref{NEP_QP_perturbations} at the given point $(\bp,\bx,\blam)\in \gph\, \S$ are
$$
\begin{array}{ll}
& I^k_1  = \left\{i \, | \, \bar{\lambda}^k_i > 0  = (a^k_i)^{\mathrm{T}} \bar{x}^k-b^k_i  \right\}\bigcup \left\{ 1,...,s_k\right\},\\
& I^k_2  = \left\{i \, | \, \bar{\lambda}^k_i = 0  =  (a^k_i)^{\mathrm{T}} \bar{x}^k-b^k_i  \right\},\\
& I^k_3  = \left\{i \, | \, \bar{\lambda}^k_i = 0  >  (a^k_i)^{\mathrm{T}} \bar{x}^k-b^k_i  \right\}.
\end{array}
$$
Let $A^k_1$ and $A^k_2$ denote the submatrices of $A^k$ corresponding to $I^k_1$ and $I^k_2$ for $k=1,...,N$. The index partition set $\mathcal{I}$ associated with Problem \eqref{NEP_QP_perturbations} is
$$
\mathcal{I} = \left\{ (J_1,J_2,J_3) 
    \, \bigg{|}  \,
            \begin{array}{ll}    
            & J_1 = \prod_{k=1}^N J^k_1, J_2 = \prod_{k=1}^N J^k_2, J_3 = \prod_{k=1}^N J^k_3,\ {\rm where}\ {\rm for}\ k=1,...,N,\\ 
            & J^k_1 \bigcup J^k_2 \bigcup J^k_3= \{1,...,m_k\} \ {\rm with}\
            I^k_1 \subset J^k_1 \subset I^k_1 \bigcup I^k_2, I^k_3 \subset J^k_3 \subset I^k_2\bigcup I^k_3\\
            \end{array}
    \right\}.
$$
Then for any partition $(J_1,J_2,J_3)\in \mathcal{I}$, 
$$
            K(J_1,J_2) = \left\{
                y=(y^1,...,y^N)\in \mathbb{R}^{n} \, \bigg{|}  \,
            \begin{array}{ll}    
            & (a^k_i)^{\mathrm{T}} y^k =0 \quad {\rm for}\ {\rm all}\ i\in J^k_1,\\ &(a^k_i)^{\mathrm{T}} y^k \le 0 \quad {\rm for}\ {\rm all}\ i\in J^k_2
            \end{array}\right\}.
        $$ 
        We also define the subspace 
$$
        M^k  = \left\{  y^k\in \mathbb{R}^{n_k} \mid  (a^k_i)^{\mathrm{T}} y^k =0 \quad {\rm for}\ {\rm all}\ i\in I^k_1\right\} \quad {\rm for}\ k=1,...,N.
$$
Suppose that $P^k$ can be written as the following symmetric block matrix
$$
    P^k = \begin{pmatrix}
        P^k_{n_1 n_1} & \cdots & P^k_{n_1 n_N}\\
        \vdots & \ddots & \vdots\\
        (P^k_{n_1 n_N})^{\mathrm{T}} & \cdots & P^k_{n_N n_N}
    \end{pmatrix},
    \quad k=1,...,N.  
$$

By Theorem \ref{thm-strong-suf}, we obtain the following proposition.

\begin{proposition}\label{pro-qp-strong}
    Let $(\Bar{x}^1,\Bar{x}^2,...,\Bar{x}^N, \Bar{\lambda}^1,\Bar{\lambda}^2,...,\Bar{\lambda}^N)\in S_{\rm KKT}(\bar{p})$. Suppose that the following requirements are fulfilled:
    \begin{enumerate}[i)]
        \item $\begin{pmatrix}
            A^k_1\\
            A^k_2
        \end{pmatrix}$ is of full row rank for $k=1,...,N$;
        \item For each player $k$ and any $0\neq y^k\in M^k$,
        $$
            (y^k)^{\mathrm{T}} P^k_{n_k n_k}y^k > 0;
        $$
        \item There exists a set of parameters 
        $$
            \left\{ \alpha_{i,j}>0 \, \bigg{|} \,  1\le i,j\le N,\, j\neq i,\, {\rm and}\  \sum\limits_{\substack{ j\neq i}} \alpha_{i,j} = 1\ {\rm for}\ i=1,...,N \right\}
        $$
        such that for any $0\neq y^k\in M^k$ and $T_{ki}: = P^k_{n_k n_i}+ (P^i_{n_i n_k})^{\mathrm{T}}$, 
        $$
            (y^k)^{\mathrm{T}} 
            \left(P^k_{n_k n_k} -\frac{1}{4 \alpha_{i,k}\alpha_{k,i}}\left(T_{ik}^{\mathrm{T}} B^i \left(({B^i})^{\mathrm{T}} P^i_{n_i n_i} B^i\right)^{-1}({B^i})^{\mathrm{T}} T_{ik}\right)\right)
            y^k>0
        $$
        for each player $i$ and player $k$, where $i\neq k$. 
    \end{enumerate}
    Then $S_{\rm KKT}(p)$ has a Lipschitz continuous single-valued localization around $\bar{p}$ for $(\bar{x},\bar{\lambda})$. Moreover, for any $(p,x(p),\lambda(p))\in {\rm gph}\, S_{\rm KKT}$ around $(\bar{p},\bar{x},\bar{\lambda})$, $x(p)$ is an LNE of Problem (\ref{NEP_QP_perturbations}).
\end{proposition}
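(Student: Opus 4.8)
The plan is to observe that hypotheses i)--iii) of the Proposition are exactly hypotheses i)--iii) of Theorem~\ref{thm-strong-suf} specialized to the quadratic data of \eqref{NEP_QP_perturbations}, and then to quote that theorem; so, once the relevant derivatives of the QP have been identified, the argument is purely a translation.

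First I would record the derivative identities for \eqref{NEP_QP_perturbations}. Since $g^k_i(x^k;w^k)=(a^k_i)^{\mathrm{T}}x^k-b^k_i$, one has $\nabla_{x^k}g^k_i(\bar x^k;\bar w^k)=a^k_i$, so the index sets $I^k_1,I^k_2,I^k_3$, the cones $K(J_1,J_2)$, the index partition family $\mathcal{I}$, and the subspaces $M^k$ (hence the matrices $B^k$) written down for the QP coincide with the corresponding objects of Section~\ref{Strong}. Since $f^k(x^k,x^{-k};w^k)=\frac{1}{2}x^{\mathrm{T}}P^k(w^k)x-\langle c^k(w^k),x\rangle$ with $P^k$ symmetric, differentiating $L^k$ twice gives $\nabla^2_{x^k x^i}L^k=P^k_{n_k n_i}$ for all $i$; in particular $\nabla^2_{x^k x^k}L^k=P^k_{n_k n_k}$, whence $A_{ki}=\nabla^2_{x^k x^i}L^k+(\nabla^2_{x^i x^k}L^i)^{\mathrm{T}}=P^k_{n_k n_i}+(P^i_{n_i n_k})^{\mathrm{T}}=T_{ki}$, and likewise $A_{ik}=T_{ik}$.

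Then I would match the three hypotheses one by one. Hypothesis i) of the Proposition forces the submatrix $A^k_1$ (the rows indexed by $I^k_1$) to have full row rank, i.e.\ $\{a^k_i:i\in I^k_1\}$ linearly independent, which is precisely the LICQ at $(\bar p^k,\bar x^k)$; so $B^k$ is well defined and hypothesis i) of Theorem~\ref{thm-strong-suf} holds. Hypothesis ii), $(y^k)^{\mathrm{T}}P^k_{n_k n_k}y^k>0$ for $0\neq y^k\in M^k$, becomes, via $\nabla^2_{x^k x^k}L^k=P^k_{n_k n_k}$ and the fact that the SSOSC critical subspace for player $k$ is precisely $M^k$, the SSOSC at $(\bar p^k,\bar x^k,\bar\lambda^k)$. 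Hypothesis iii), after substituting $\nabla^2_{x^k x^k}L^k=P^k_{n_k n_k}$, $\nabla^2_{x^i x^i}L^i=P^i_{n_i n_i}$ and $A_{ik}=T_{ik}$, is word for word hypothesis iii) of Theorem~\ref{thm-strong-suf} for the same weight family $\{\alpha_{i,j}\}$. Applying Theorem~\ref{thm-strong-suf} then delivers the Lipschitz continuous single-valued localization of $S_{\rm KKT}$ around $\bar p$ for $(\bar x,\bar\lambda)$ together with the conclusion that $x(p)$ is an LNE of \eqref{NEP_QP_perturbations} for $(p,x(p),\lambda(p))\in{\rm gph}\,S_{\rm KKT}$ near $(\bar p,\bar x,\bar\lambda)$.

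I do not expect a substantial obstacle here; the one place calling for care is the block-matrix bookkeeping. Note that $\nabla^2_{x^k x^i}L^k$ extracts the $(n_k,n_i)$ block from player $k$'s \emph{own} Hessian $P^k$, so that $A_{ki}=T_{ki}$ combines the $(n_k,n_i)$ block of $P^k$ with the transposed $(n_i,n_k)$ block of $P^i$ --- two distinct matrices --- and one must check that the symmetry of each $P^k$ makes $T_{ki}^{\mathrm{T}}=T_{ik}$, so that everything remains consistent with the identity $A_{ki}^{\mathrm{T}}=A_{ik}$ underlying Theorem~\ref{thm-strong-suf}. I would also remark in passing that hypothesis i) of the Proposition is slightly stronger than the LICQ it is used to supply, which is harmless.
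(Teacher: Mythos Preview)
Your approach is correct and matches the paper's, which simply states that the proposition follows from Theorem~\ref{thm-strong-suf}. One small correction: LICQ at $(\bar p^k,\bar x^k)$ requires linear independence of the gradients of \emph{all} active constraints, i.e.\ those indexed by $I^k_1\cup I^k_2$, not just $I^k_1$; hence hypothesis~i) of the Proposition is exactly LICQ, not strictly stronger as you suggest.
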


In the following, we provide an intuitive example to illustrate Proposition \ref{pro-qp-strong}.

\begin{example}\label{exm-strong-regularity}
    Consider the following NEP:
    \begin{equation}
        \begin{array}{ll}
         {\rm Player}\ 1\quad
        & \left\{\begin{array}{ll}
          \min\limits_{x^1\in \mathbb{R}^2} & \frac{1}{2}(x^1_1,x^1_2,x^2_1)
          \begin{pmatrix}
            1 & 0 & 0\\
            0 & -1 & 1\\
            0 & 1 & 0
          \end{pmatrix}
          \begin{pmatrix}
            x^1_1\\
            x^1_2\\
            x^2_1
          \end{pmatrix}
          +\varepsilon x^1_1\\
          {\rm s.\ t.} & x^1_1\le 0, \\
          & x^1_2\le 0.
        \end{array}\right.\\
        ~\\
        {\rm Player}\ 2\quad
        & \begin{array}{ll}
          \min\limits_{x^2\in \mathbb{R}} & \frac{1}{2}(x^1_1,x^1_2,x^2_1)
          \begin{pmatrix}
            0 & 0 & -1\\
            0 & 0 & 0\\
            -1 & 0 & 1
          \end{pmatrix}
          \begin{pmatrix}
            x^1_1\\
            x^1_2\\
            x^2_1
          \end{pmatrix}+x^2_1.
        \end{array}
        \end{array}
        \end{equation}
\end{example}
When $\varepsilon =0$, we observe that 
$(\bar{x}^1_1,
        \bar{x}^1_2,
        \bar{x}^2_1,
        \bar{\lambda}^1_1,
        \bar{\lambda}^1_2)
     = 
     (0,0,-1,0,1) \in S_{\rm KKT}(0)
$
is an isolated KKT triple. Following from the notations of Proposition \ref{pro-qp-strong}, one has 
    $ M^1 = \mathbb{R}\times \left\{0\right\}, M^2 = \mathbb{R}$, 
    \begin{math}
        A^1_1 = \left( 
            \begin{smallmatrix}
                0 & 1
            \end{smallmatrix}
        \right)
    \end{math},
    \begin{math}
        A^1_2 = \left( 
            \begin{smallmatrix}
                1 & 0
            \end{smallmatrix}
        \right)
    \end{math},
    \begin{math}
        B^1 = \left( 
            \begin{smallmatrix}
                1 \\
                0
            \end{smallmatrix}
        \right)
    \end{math},
    and 
    \begin{math}
        B^2 = \left( 
            \begin{smallmatrix}
                1
            \end{smallmatrix}
        \right)
    \end{math}. 
Let $\alpha_{1,2} = \alpha_{2,1} = 1$. For player $1$, 
\begin{enumerate}[i)]
    \item $\begin{pmatrix}
        A^1_1\\
        A^1_2
    \end{pmatrix}$ is of full row rank;
    \item For any $0 \neq y^1 = (y^1_1,y^1_2)\in \mathbb{R}\times \left\{ 0\right\}$,
    $$
        (y^1)^{\mathrm{T}} P^1_{n_1 n_1} y^1 = (y^1_1)^2  >0;
    $$
    \item For any $0 \neq y^1 = (y^1_1,y^1_2)\in \mathbb{R}\times \left\{ 0\right\}$,
    \begin{math}
       T_{12} =\left(
            \begin{smallmatrix}
                -1\\
                1
            \end{smallmatrix}
        \right)
    \end{math},
    $$
    (y^1)^{\mathrm{T}} 
    \left(P^1_{n_1 n_1} -\frac{1}{4\alpha_{1,2}\alpha_{2,1}}\left(T_{12} B^2 \left(({B^2})^{\mathrm{T}} P^2_{n_2n_2} B^2\right)^{-1}({B^2})^{\mathrm{T}}(T_{12})^{\mathrm{T}}\right)\right)
    y^1 = \frac{3}{4}(y^1_1)^2>0.
    $$
\end{enumerate}
For player $2$,
\begin{enumerate}[i)]
    \item For any $0 \neq y^2 = (y^2_1)\in \mathbb{R}$,
    $$
        (y^2)^{\mathrm{T}} P^2_{n_2 n_2} y^2 = (y^2_1)^2  >0;
    $$
    \item For any $0 \neq y^2 = (y^2_1)\in \mathbb{R}$,
    \begin{math}
        T_{21} =\left(
             \begin{smallmatrix}
                 -1 & 1
             \end{smallmatrix}
         \right)
     \end{math},
    $$
    (y^2)^{\mathrm{T}} 
    \left(P^2_{n_2 n_2} -\frac{1}{4\alpha_{1,2}\alpha_{2,1}}\left(T_{21} B^1 \left(({B^1})^{\mathrm{T}} P^1_{n_1n_1} B^1\right)^{-1}({B^1})^{\mathrm{T}}(T_{21})^{\mathrm{T}}\right)\right)
    y^2 = \frac{3}{4}(y^2_1)^2>0.
    $$
\end{enumerate}
Following from Proposition \ref{pro-qp-strong}, $S_{\rm KKT}$ is strongly regular at $\bar{\varepsilon} = 0$ for $(\bar{x},\bar{\lambda}) = (0,0,-1,0,1)$. Moreover, it is clear that the SCSC fails to hold at $(\bar{\varepsilon},\bar{x},\bar{\lambda})=(0,0,0,0,1)$ for player 1, which yields that $S_{\rm KKT}$ does not have a continuously differentiable single-valued localization from Theorem \ref{thm-dif-eq}. In fact, we observe that for sufficiently small neighborhoods $V$ of $\bar{x}$ and $W$ of $\bar{\lambda}$,
$$
    S_{\rm KKT}(\varepsilon)\cap V\times W = 
    \left\{\begin{array}{ll}
        \left(-\varepsilon,0,-1-\varepsilon,0,\varepsilon+1\right)^{\mathrm{T}} & {\rm when}\ \varepsilon>0\  {\rm is}\ {\rm sufficiently}\ {\rm small},\\
        (0, 0 , -1,-\varepsilon,1)^{\mathrm{T}} & {\rm when}\ \varepsilon<0 \  {\rm is}\ {\rm sufficiently}\ {\rm small}.
    \end{array}\right.
$$

By Corollary \ref{coro-dif-suf}, we obtain the following proposition.

\begin{proposition}\label{prop-qp-dif}
    Let $(\Bar{x}^1,\Bar{x}^2,...,\Bar{x}^N, \Bar{\lambda}^1,\Bar{\lambda}^2,...,\Bar{\lambda}^N)\in S_{\rm KKT}(\bar{p})$. Suppose that the following requirements are fulfilled:
    \begin{enumerate}[i)]
        \item $I^k_2  = \emptyset$ for $k=1,...,N$;
        \item $A^k_1$ is of full row rank for $k=1,...,N$;
        \item For each player $k$ and any $0\neq y^k\in M^k$,
        $$
            (y^k)^{\mathrm{T}} P^k_{n_k n_k}y^k > 0;
        $$
        \item There exists a set of parameters 
        $$
            \left\{ \alpha_{i,j}>0 \, \bigg{|} \,  1\le i,j\le N,\, j\neq i,\, {\rm and}\  \sum\limits_{\substack{ j\neq i}} \alpha_{i,j} = 1\ {\rm for}\ i=1,...,N \right\}
        $$
        such that for any $0\neq y^k\in M^k$ and $T_{ki}: = P^k_{n_k n_i}+ (P^i_{n_i n_k})^{\mathrm{T}}$, 
        $$
            (y^k)^{\mathrm{T}} 
            \left(P^k_{n_k n_k} -\frac{1}{4 \alpha_{i,k}\alpha_{k,i}}\left(T_{ik}^{\mathrm{T}} B^i \left(({B^i})^{\mathrm{T}} P^i_{n_i n_i} B^i\right)^{-1}({B^i})^{\mathrm{T}} T_{ik}\right)\right)
            y^k>0
        $$
        for each player $i$ and player $k$, where $i\neq k$. 
    \end{enumerate}
    Then $S_{\rm KKT}(p)$ has a continuously differentiable single-valued localization around $\bar{p}$ for $(\bar{x},\bar{\lambda})$. Moreover, for any $(p,x(p),\lambda(p))\in {\rm gph}\, S_{\rm KKT}$ around $(\bar{p},\bar{x},\bar{\lambda})$, $x(p)$ is an LNE of Problem (\ref{NEP_QP_perturbations}).
\end{proposition}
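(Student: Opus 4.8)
The plan is to identify Proposition \ref{prop-qp-dif} as the specialization of Corollary \ref{coro-dif-suf} to the quadratic model (\ref{NEP_QP_perturbations}), so that the proof reduces to translating hypotheses i)--iv) of the proposition into hypotheses i)--iv) of the corollary and then invoking it verbatim. Since Corollary \ref{coro-dif-suf} already packages Theorem \ref{thm-strong-suf} and Theorem \ref{thm-dif-eq}, no new analytic machinery is required.

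First I would record the relevant derivatives of the QP data. For player $k$ the ordinary Lagrangian is $L^k(x^k,x^{-k},\lambda^k;\bar w^k)=\tfrac12 x^{\mathrm{T}}P^k x-\langle c^k,x\rangle+\sum_{i=1}^{m_k}\lambda^k_i\big((a^k_i)^{\mathrm{T}}x^k-b^k_i\big)$, hence $\nabla^2_{x^kx^i}L^k=P^k_{n_kn_i}$ for every $i$, the constraint gradient $\nabla_{x^k}g^k_i(\bar x^k;\bar w^k)=a^k_i$, and $\mathcal{J}_{x^k}G^k=A^k$. In particular $A_{ki}:=\nabla^2_{x^kx^i}L^k+(\nabla^2_{x^ix^k}L^i)^{\mathrm{T}}=P^k_{n_kn_i}+(P^i_{n_in_k})^{\mathrm{T}}=T_{ki}$, the subspace $M^k=\{y^k\mid (a^k_i)^{\mathrm{T}}y^k=0,\ i\in I^k_1\}$ is the one defined in this section, and consequently the matrices $B^k$ and the product $M=M^1\times\cdots\times M^N$ coincide with the objects appearing in Corollary \ref{coro-dif-suf}.

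Next I would match the four hypotheses. Hypothesis i), $I^k_2=\emptyset$, is by definition the SCSC for player $k$ at $(\bar p^k,\bar x^k,\bar\lambda^k)$. Because $I^k_2=\emptyset$, the set of constraints active at $\bar x^k$ equals $I^k_1$, so hypothesis ii), that $A^k_1$ has full row rank, says precisely that the gradients $\{a^k_i:i\in I^k_1\}$ are linearly independent, i.e.\ the LICQ holds for player $k$. Again because $I^k_2=\emptyset$, the critical cone $\mathcal{C}(\bar p^k,\bar x^k,\bar\lambda^k)$ collapses to $\{y^k\mid (a^k_i)^{\mathrm{T}}y^k=0,\ i\in I^k_1\}=M^k$ while $\nabla^2_{x^kx^k}L^k=P^k_{n_kn_k}$, so hypothesis iii), namely $(y^k)^{\mathrm{T}}P^k_{n_kn_k}y^k>0$ for all $0\neq y^k\in M^k$, is exactly the SOSC for player $k$. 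Finally, using $A_{ki}=T_{ki}$ and $\nabla^2_{x^kx^k}L^k=P^k_{n_kn_k}$, hypothesis iv) of the proposition is literally hypothesis iv) of Corollary \ref{coro-dif-suf} with the same parameters $\{\alpha_{i,j}\}$.

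With all four hypotheses of Corollary \ref{coro-dif-suf} verified, the corollary yields that $S_{\rm KKT}(p)$ has a continuously differentiable single-valued localization around $\bar p$ for $(\bar x,\bar\lambda)$ and that for every $(p,x(p),\lambda(p))\in\gph\,S_{\rm KKT}$ near $(\bar p,\bar x,\bar\lambda)$ the point $x(p)$ is an LNE of (\ref{NEP_QP_perturbations}), which is exactly the assertion. There is essentially no analytic obstacle; the only step requiring care is the index-set bookkeeping, in particular the observation that $I^k_2=\emptyset$ forces the active constraint set to equal $I^k_1$ and the critical cone to equal $M^k$, which is what lets conditions ii) and iii) of the proposition be read off as the LICQ and the SOSC, respectively.
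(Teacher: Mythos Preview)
Your proposal is correct and follows exactly the paper's approach: the paper states the proposition as an immediate consequence of Corollary \ref{coro-dif-suf} (it is introduced with ``By Corollary \ref{coro-dif-suf}, we obtain the following proposition'' and no further argument is given). Your explicit translation of hypotheses i)--iv) into the SCSC, LICQ, SOSC, and the cross-term condition of Corollary \ref{coro-dif-suf} is precisely the intended verification.
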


By Theorem \ref{thm-robust}, we obtain the following proposition.

\begin{proposition}\label{prop-qp-robust}
    Let $(\Bar{x}^1,\Bar{x}^2,...,\Bar{x}^N, \Bar{\lambda}^1,\Bar{\lambda}^2,...,\Bar{\lambda}^N)\in S_{\rm KKT}(\bar{p})$. For each player $k$, suppose the following requirements are fulfilled:
        \begin{enumerate}[i)]
            \item $P^k_{n_k n_k}$ is positive semi-definite;
            \item $A_1^k$ is of full row rank, and there exists $z^k\in \mathbb{R}^{n_k}$ such that 
            $$
                A^k_1 z^k = 0 \ {\rm and}\ A^k_2 z^k < 0;
            $$
            \item For any $0\neq y\in  K(I_1,I_2)$,
            $$
                \max\limits_{k=1,...,N}
                \sum\limits_{i\neq k}^N (y^k)^{\mathrm{T}} P^k_{n_k n_i} y^i + \frac{1}{2}(y^k)^{\mathrm{T}} P^k_{n_k n_k}y^k >0. 
            $$
        \end{enumerate}
        Then $S_{\rm KKT}$ is robust isolated calm at $\bar{p}$ for $(\bar{x},\bar{\lambda})$, and for any $(p,x(p),\lambda(p))\in {\rm gph}\, S_{\rm KKT}$ around $(\bar{p},\bar{x},\bar{\lambda})$, $x(p)$ is an NE of Problem (\ref{NEP_perturbations}).
\end{proposition}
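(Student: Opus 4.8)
The plan is to obtain this proposition as a direct specialization of Theorem \ref{thm-robust} to the quadratic model \eqref{NEP_QP_perturbations}, by checking that conditions i)--iii) here are exactly the three hypotheses of that theorem once all the data are written out explicitly. First I would verify the convex assumptions of Definition \ref{def-conv}: for Problem \eqref{NEP_QP_perturbations} one has $g^k_i(\cdot;w^k) = (a^k_i)^{\mathrm{T}}(\cdot) - b^k_i$, which is affine for every $i$ and $k$ (hence, in particular, convex for $i = s_k+1,\dots,m_k$), and the restriction of $f^k(\cdot,x^{-k};w^k) - \langle v^k,\cdot\rangle$ to $x^k$ is the quadratic with Hessian $P^k_{n_k n_k}$, which is positive semi-definite by i); so Definition \ref{def-conv} holds.

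Next I would translate the SMFCQ. Here $\nabla_{x^k} g^k_i(\bar{x}^k;\bar{w}^k) = a^k_i$, and since $\bar p = 0$ the index sets \eqref{def-index-NEP} coincide with the ones listed for \eqref{NEP_QP_perturbations}, because $g^k_i(\bar{x}^k;\bar{w}^k)-\bar{u}^k_i = (a^k_i)^{\mathrm{T}}\bar{x}^k - b^k_i$. Then the linear independence of $\{\nabla_{x^k} g^k_i(\bar x^k;\bar w^k) : i\in I^k_1\}$ is exactly the full row rank of $A^k_1$, and the Mangasarian--Fromovitz direction is exactly a vector $z^k$ with $A^k_1 z^k = 0$ and $A^k_2 z^k < 0$; hence ii) is precisely the SMFCQ at $(\bar p^k,\bar x^k,\bar\lambda^k)$ for each $k$ (and, as recorded in Section \ref{Pre}, it forces uniqueness of $\bar\lambda^k$).

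Finally I would match the P-property. The ordinary Lagrangian $L^k(x,\lambda^k;w^k) = \tfrac12 x^{\mathrm{T}}P^k x - \langle c^k,x\rangle + \sum_i \lambda^k_i((a^k_i)^{\mathrm{T}}x^k-b^k_i)$ is quadratic in $x$ with constant Hessian $P^k$, so $\nabla^2_{x^k x^i}L^k = P^k_{n_k n_i}$, and the cone $K(I_1,I_2)$ written with the rows $(a^k_i)^{\mathrm{T}}$ is the same one appearing in Definition \ref{def-P-pro}; thus iii) is literally the P-property at $(\bar p,\bar x,\bar\lambda)$ on $K(I_1,I_2)$. With all three hypotheses of Theorem \ref{thm-robust} in force, the conclusion --- $S_{\rm KKT}$ robust isolated calm at $\bar p$ for $(\bar x,\bar\lambda)$, with every nearby $x(p)$ an NE --- follows at once. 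The only points needing care (rather than a genuine obstacle) are the bookkeeping that it is $P^k_{n_k n_k}$ being positive semi-definite, not the full $P^k$, that the convexity hypothesis requires, and that the index sets and critical cones used throughout Section \ref{QP} genuinely reduce to those of the general theory; no new degree-theoretic argument is needed, since all of that is already internal to Theorem \ref{thm-robust} via Proposition \ref{prop-deg-P}.
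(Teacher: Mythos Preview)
Your proposal is correct and follows exactly the paper's approach: the paper states the proposition as an immediate consequence of Theorem \ref{thm-robust}, and your verification that conditions i)--iii) specialize precisely to the convex assumptions, the SMFCQ, and the P-property is the intended (and only needed) argument.
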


In the following, we provide an intuitive example to illustrate Proposition \ref{prop-qp-robust}.

\begin{example}\label{exm1-robust-isolated-calmness}
    Consider the following  NEP:
    \begin{equation}
        \begin{array}{ll}
         {\rm Player}\ 1\quad
        & \left\{\begin{array}{ll}
          \min\limits_{x^1\in \mathbb{R}} & \frac{1}{2}(x^1_1,x^2_1)
          \begin{pmatrix}
            1 & 1\\
            1 & 0
          \end{pmatrix}
          \begin{pmatrix}
            x^1_1\\
            x^2_1
          \end{pmatrix}
         + \varepsilon x^1_1\\
          {\rm s.\ t.} 
          & x^1_1\le 2\e,
        \end{array}\right.\\
        ~\\
        {\rm Player}\ 2\quad
        & \left\{\begin{array}{ll}
          \min\limits_{x^2\in \mathbb{R}} & \frac{1}{2}(x^1_1,x^2_1)
          \begin{pmatrix}
            0 & 2\\
            2 & 1
          \end{pmatrix}
          \begin{pmatrix}
            x^1_1\\
            x^2_1
          \end{pmatrix}
         +2 \varepsilon x^2_1\\
          {\rm s.\ t.} 
          & x^2_1\le \e.
        \end{array}\right.
        \end{array}
        \end{equation}
\end{example}
When $\varepsilon =0$, we observe that $(\bar{x}^1_1,
\bar{x}^2_1,
\bar{\lambda}^1_1,
\bar{\lambda}^2_1)
= 
(0,0,0,0) \in S_{\rm KKT}(0)
$
is an isolated KKT triple. 
Following from the notations of Proposition \ref{pro-qp-strong}, 
let $z^1 = -1$, $z^2 = -2$. Then for
\begin{math}
    A^1_2 = \left(\begin{smallmatrix}
        1
     \end{smallmatrix}\right),
     A^2_2 = \left(\begin{smallmatrix}
        1
     \end{smallmatrix}\right),
\end{math}
$A^1_2 z^1<0$ and $A^2_2 z^2 <0$, which implies that the SMFCQ holds at $(\bar{p}^k,\bar{x}^k)$ for each player $k$. Moreover, 
$$
     K(I_1,I_2) = \left\{y=(y^1,y^2) \, \big{|} \, y^1_1\le 0, y^2_1\le 0  \right\},
$$
and the system
$$
\left\{\begin{array}{ll}
    \frac{1}{2}(y^1_1)^{\mathrm{T}} P^1_{11}y^1_1 + (y^1_1)^{\mathrm{T}} P^1_{12}y^2_1  = \frac{1}{2}(y^1_1)^2 + y^1_1 y^2_1 \le 0,\\
    (y^2_1)^{\mathrm{T}} P^2_{21}y^1_1 + \frac{1}{2}(y^2_1)^{\mathrm{T}} P^2_{22}y^2_1  = \frac{1}{2}(y^2_1)^2 + 2y^1_1 y^2_1 \le 0
\end{array}\right.     
$$
has no nonzero solutions for $y\in K(I_1,I_2)$. Consequently, $S_{\rm KKT}$ is robust isolated calm at $\bar{\varepsilon} = 0$ for $(\bar{x},\bar{\lambda}) = (0,0,0,0)$, and for any $(\varepsilon,x(\varepsilon),\lambda(\varepsilon))\in {\rm gph}\, S_{\rm KKT}$ around $(0,\bar{x},\bar{\lambda})$, $x(\varepsilon)$ is an NE of Problem (\ref{NEP_perturbations}). In fact, direct computations yield that 
$$
    S_{\rm KKT}(\varepsilon) = \left\{\begin{array}{lll}\left\{ \left(2\e,-6 \varepsilon, 3\varepsilon, 0 \right)^{\mathrm{T}}\right\} \bigcup \left\{ (-2\varepsilon, \e , 0, \e)^{\mathrm{T}} \right\} \bigcup 
          \left\{ (-\varepsilon, 0 , 0, 0)^{\mathrm{T}} \right\} & & {\rm when}\ \varepsilon>0,\\
    \left\{(2\e,\e,-4\e,-7\e)^{\mathrm{T}}\right\} & & {\rm when}\ \varepsilon \le 0.
        
    \end{array}\right.
$$

\section{Conclusions}\label{Con}
In this paper, we analyze the stability properties of the KKT solution mapping $S_{\rm KKT}$ for a standard NEP with canonical perturbations. Firstly, we establish the exact characterizations of the strong regularity and the continuously differentiable single-valued localization of $S_{\rm KKT}$. Secondly, we propose {\rm the I-property} for the isolated calmness of $S_{\rm KKT}$ without convex assumptions, and {\rm the P-property} for the robust isolated calmness of $S_{\rm KKT}$ under convex assumptions. If Problem (\ref{NEP_perturbations}) is convex, {\rm the I-property} is less restrictive than {\rm the P-property}. At the end of each section, we illustrate the equivalence between the stability properties of $S_{\rm KKT}$ and $\T$, i.e., the KKT solution mapping for a standard NEP with only tilt perturbations. Finally, we provide detailed characterizations of stability for the NEP in which each individual player solves a QP problem.

Nevertheless, there are many other unresolved stability issues in NEPs. For instance, is {\rm the I-property} also necessary for the isolated calmness of $S_{\rm KKT}$? In the context of NLP, the answer is positive, where {\rm the I-property} actually reduces to the SOSC (\citet{dontchev2020characterizations}) under the SONC for optimality. Moreover, we obtain the robust isolated calmness of $S_{\rm KKT}$ from the convex assumptions and the P-property. Is it possible that we make {\rm the P-property} less restrictive, such as adopting {\rm the I-property}, or eliminate the convex assumptions to obtain the robust isolated calmness of $S_{\rm KKT}$? These are all challenging questions given that finding NEs is also a fixed point problem.


%
%
%

\ACKNOWLEDGMENT{The authors thank all reviewers for their valuable comments and suggestions. The authors also thank Jiani Wang for her valuable feedback.}



\bibliographystyle{informs2014} 
\bibliography{references} 





\end{document}